
\documentclass[smallextended,final,envcountsect]{svjour3} 
\smartqed 


\usepackage{amsmath,amssymb,mathrsfs}
\usepackage{color}
\usepackage{url,hyperref}
\usepackage[usenames,dvipsnames]{xcolor}
\usepackage{xfrac}
\usepackage{algorithm,algorithmic}
\usepackage{multicol} 
\usepackage{booktabs} 
\newtheorem{assumption}{Assumption}[section]



\newcommand{\R}{\mathbb{R}}
\newcommand{\Rext}{\R\cup\{+\infty\}}

\newcommand{\abs}[1]{\left\vert#1\right\vert}
\newcommand{\abss}[1]{\vert#1\vert}

\newcommand{\set}[1]{\left\{#1\right\}}
\newcommand{\sets}[1]{\{#1\}}
\newcommand{\norm}[1]{\left\Vert#1\right\Vert}
\newcommand{\norms}[1]{\Vert#1\Vert}
\newcommand{\tnorm}[1]{\vert\!\Vert#1\vert\!\Vert}
\newcommand{\tnorms}[1]{\vert\!\Vert#1\vert\!\Vert}

\newcommand{\Eproof}{\hfill $\square$}
\newcommand{\prox}{\mathrm{prox}}

\newcommand{\argmin}{\mathrm{arg}\!\displaystyle\min}

\newcommand{\dom}[1]{\mathrm{dom}(#1)}

\newcommand{\zero}[1]{{\boldsymbol{0}}}

\newcommand{\Xc}{\mathcal{X}}
\newcommand{\Yc}{\mathcal{Y}}
\newcommand{\Zc}{\mathcal{Z}}
\newcommand{\Sc}{\mathcal{S}}
\newcommand{\Sdp}{\mathbb{S}}

\newcommand{\Tc}{\mathcal{T}}

\newcommand{\Uc}{\mathcal{U}}
\newcommand{\Cc}{\mathcal{C}}
\newcommand{\Nc}{\mathcal{N}}

\newcommand{\Kc}{\mathcal{K}}

\newcommand{\Xopt}{\mathcal{X}^{\star}}
\newcommand{\Yopt}{\mathcal{Y}^{\star}}
\newcommand{\iprod}[1]{\left\langle #1\right\rangle}
\newcommand{\iprods}[1]{\langle #1\rangle}

\newcommand{\intx}[1]{\mathrm{int}\left(#1\right)}
\newcommand{\ri}[1]{\mathrm{ri}\left(#1\right)}

\newcommand{\BigO}[1]{\mathcal{O}\left(#1\right)}


\newcommand{\beforesec}{\vspace{-2.5ex}}
\newcommand{\aftersec}{\vspace{-2.5ex}}
\newcommand{\beforesubsec}{\vspace{-3ex}}
\newcommand{\aftersubsec}{\vspace{-2.5ex}}
\newcommand{\beforesubsubsec}{\vspace{-2ex}}
\newcommand{\aftersubsubsec}{\vspace{-1.25ex}}
\newcommand{\beforepara}{\vspace{-2.5ex}}

\begin{document}

\title{An Inexact Interior-Point Lagrangian Decomposition  Algorithm with Inexact Oracles}
\titlerunning{An Inexact Interior-Point Lagrangian Decomposition  Algorithm}


\author{Deyi Liu   \and  Quoc Tran-Dinh$^{\ast}$}
\authorrunning{Deyi Liu $\cdot$  Quoc Tran-Dinh}

\institute{Deyi Liu \and Quoc Tran-Dinh \at
             Department of Statistics and Operations Research, The University of North Carolina at Chapel Hill\\
              333 Hanes Hall, UNC-Chapel Hill, NC27599\\
              Emails: \{deyi@live.unc.edu, quoctd@email.unc.edu\}\\
              $^{\ast}$\textit{Corresponding author.}
}

\date{Received: date / Accepted: date}

\maketitle

\begin{abstract}
\vspace{-2ex}
We develop a new inexact interior-point Lagrangian decomposition method to solve a wide range class of constrained composite convex optimization problems.
Our method relies on four techniques: Lagrangian dual decomposition, self-concordant barrier smoothing, path-following, and proximal-Newton technique.
It also allows one to approximately compute the solution of the primal subproblems (called \textit{the slave problems}), which leads to inexact oracles (i.e., inexact gradients and Hessians) of the smoothed dual problem (called \textit{the master problem}).
The smoothed dual problem is nonsmooth, we propose to use an inexact proximal-Newton method to solve it.
By appropriately controlling the inexact computation at both levels: the slave and master problems, we still estimate a polynomial-time iteration-complexity of our algorithm as in standard short-step interior-point methods. 
We also provide a strategy to recover primal solutions and establish complexity to achieve an approximate primal solution.
We illustrate our method through two numerical examples on well-known models with both synthetic and real data and compare it with some existing state-of-the-art methods.
\end{abstract}

\keywords{Interior-point Lagrangian decomposition \and barrier smoothing \and inexact oracle \and proximal-Newton method \and constrained convex optimization}
\subclass{90C25 \and 90-08}


\beforesec
\section{Introduction}
\aftersec
The Lagrangian dual decomposition framework is a classical technique to handle constrained convex optimization problems with separable structures such as conic, multi-stage stochastic, network, and distributed optimization problems \cite{Bertsekas1989b,Birge1985,Connejo2006,Dantzig1963,Palomar2006}. 
This approach has been incorporated with interior-point methods to obtain a dual decomposition interior-point framework in early 1990s \cite{Kojima1993}.
Since then, many researchers have regularly applied this approach to different problems.
For example, \cite{Fukuda2002} exploited this idea to develop a dual decomposition algorithm for semidefinite programming, and \cite{Zhao2005} considered this method for general convex and multi-stage stochastic programming.
The authors in \cite{Necoara2009} further investigated the method from \cite{Zhao2005} to solve a more general class of problems and obtained more intensive and rigorous complexity guarantees.
The work \cite{TranDinh2012c} studied this framework under the effect of inexact oracle computed by inexactly solving  the primal subproblems up to a given accuracy.
Other related theoretical results include \cite{bitlislioglu2017interior,fukuda2000interior,gros2014newton,halldorsson2003interior,Pakazad2017,Shida2008,Yamashita2011}.
In particular, \cite{Pakazad2017} solved loosely coupled problems  using message passing, and \cite{bitlislioglu2017interior} applied it to multi-agent optimization problems.
However, none of these works has investigated general constrained composite convex optimization settings involving linear operators and allows both inexactness in the slave problems and master problem altogether. 
In addition, existing methods do not handle directly nonsmooth objectives but often introduce auxiliary variables to reformulate the underlying problem into a smooth problem  which may significantly increase problem size and loose their theoretical guarantee.

\beforepara
\paragraph{\textbf{Motivation and goals:}}
Although the Lagrangian decomposition method is classical, it is very useful to handle large-scale constrained convex problems with separable structure by means of parallel and distributed computational architectures. 
In this paper, we conduct an intensive study on the interior-point Lagrangian decomposition (IPLD) framework considered in many existing works, especially \cite{Necoara2009,TranDinh2012c,Zhao2005}, from the following aspects.
\begin{itemize}
\vspace{-1ex}
\item[(a)] Firstly, we consider a more general problem class than \cite{Kojima1993,Necoara2009,TranDinh2012c,Zhao2005} by handling directly a nonsmooth composite convex function with a linear operator (see \eqref{eq:constr_cvx}) instead of couple linear equality constraints as in existing methods by means of proximal Newton-type methods (see Subsection~\ref{subsec:inexact_newton_dual}). 

\item[(b)] Secondly, our method works with inexact oracles of the dual problem arising from inexactly solving the primal subproblems (the slave problems).
We explicitly describe the range of accuracies to flexibly control the tolerance of the subproblems (see Subsection~\ref{subsec:inexact_oracles}).

\item[(c)] Thirdly, we also exploit inexact proximal-Newton method to handle general nonsmooth terms of the dual problems.

\item[(d)] Fourthly, we provide a thorough analysis for both the primal and dual problems and derive concrete iteration-complexity bounds for our method.

\item[(e)] Finally, we incorporate our approach with a recent concept called ``generalized self-concordance'' developed in \cite{SunTran2017gsc} to handle new applications.

\vspace{-1ex}
\end{itemize}
We are interested in the class of constrained composite convex problems where $g$ is smooth and satisfies some additional properties so that existing methods often do not have a theoretical convergence guarantee.
For instance, the objective function does not have Lipschitz gradient or is not ``tractably proximal''.
We also consider a generic convex set where the projection onto it may not be tractable to compute such as general polyhedra.
Under such assumptions, our problem setting covers a wide range class of applications ranging from optimal control, operations research, and networks to machine learning, statistics, and signal processing \cite{BenTal2001,Boyd2004}.
It also covers standard conic programming such as linear programming, second-order cone programming, and semidefinite programming.

\beforepara
\paragraph{\textbf{Our contribution:}}
We exploit the approach from \cite{Kojima1993,Necoara2009,TranDinh2012c,Zhao2005} to develop a new algorithm for solving a class of constrained convex optimization problems.
The main idea is to smooth the dual problem using a self-concordant barrier function \cite{Nesterov1994} associated with the constraint set, and apply a path-following scheme to solve the smoothed dual problem. 
While  \cite{Kojima1993,Necoara2009,Zhao2005} exactly follow this main stream,  \cite{TranDinh2012c} proposed another path-following scheme and analyzed its convergence under inexact computation.
It also provides a strategy to recover an approximate primal solution from its approximate dual  solution.
Compared to \cite{TranDinh2012c}, this work studies a much more general problem class than \cite{TranDinh2012c}.
In addition, it is different from existing works, including \cite{TranDinh2012c}, in several aspects as previously mentioned.
To this end, we can summarize our contribution as follows:
\begin{itemize}
\vspace{-1ex}

\item[(a)]
We exploit the approach in \cite{Kojima1993,Necoara2009,TranDinh2012c,Zhao2005} and combine it with recent new mathematical tools in \cite{Nesterov2004c,SunTran2017gsc} to develop a new algorithm.
The new mathematical tools allow us to cover much broader class of models than \cite{Kojima1993,Necoara2009,TranDinh2012c,Zhao2005}, and to analyze polynomial-time iteration-complexity.
In addition, we handle a more general class of problems than  \cite{Kojima1993,Necoara2009,TranDinh2012c,Zhao2005} by allowing general composite convex objectives involving linear operators (see \eqref{eq:constr_cvx}).

\item[(b)]
We propose a new inexact interior-point Lagrangian decomposition algorithm to solve this class of problems.
Our algorithm can deal with inexact oracles of the dual problems arising from approximating the primal subproblem solutions.
It also uses an inexact proximal-Newton scheme to approximate the search direction in the dual problem.
We characterize explicitly the choice of all related parameters and accuracies based on our analysis.

\item[(c)]
We establish a polynomial-time iteration-complexity estimate of our method to find an approximate optimal solution. 
Our algorithm can be viewed as a short-step interior-point methods for general convex problems involving Nesterov and Nemirovskii's self-concordance structures.
Our complexity bound is the same as in standard interior-point methods (up to a constant factor), while it is able to directly handle nonsmooth objective by means of proximal operator.
\vspace{-1.5ex}
\end{itemize}

In addition to the above main contribution, let us highlight some technical contribution of our methods.
Firstly, unlike other methods involving inexact oracles in the literature \cite{Devolder2010}, our inexact oracle is rendered from inexact solution of the subproblem.
The accuracy level can be adaptively chosen instead fixing as in existing methods to flexibly trade-off the computation cost by choosing rough accuracy  at the early iterations and decrease it in the last iterations.
Secondly, solving the primal subproblem (slave problem) is reduced to solve a nonlinear equation instead of a general convex problem as in some existing decomposition methods.
As a result, we can characterize an implementable criterion to control the inexactness of the primal subproblems by using Newton-type schemes.
Thirdly, instead of using unspecified parameters such as the radius of quadratic convergence region and contraction factor, we compute these parameters explicitly using the theory of self-concordant barriers as often seen in interior-point methods \cite{Nesterov2004,Nesterov1994}.
Finally, combining inexact oracle and inexact methods make our algorithm practical since this computation is unavoidable in iterative methods, especially, in decomposition approaches when handling complex models.

\beforepara
\paragraph{\textbf{Paper organization:}}
The rest of this paper is organized as follows.
Section~\ref{sec:prob_statement} states the problem of interest, basic assumptions, and its dual form.
Section~\ref{sec:gsc} recalls some preliminary results on (generalized) self-concordance and self-concordant barriers \cite{Nesterov1994}.
Section~\ref{sec:barrier_smoothing} focuses on barrier smoothing techniques and inexact oracles.
Section~\ref{sec:dual_decomp_alg} presents our main algorithm and its complexity analysis as well as convergence guarantees.
Section~\ref{sec:numerical_experiment} provides two numerical examples to verify the theoretical results.
For the sake of presentation, we move all the technical proofs to the appendix.

\beforesec
\section{Problem statement, basic assumptions, and dual formulation}\label{sec:prob_statement}
\aftersec
\paragraph{\textbf{Notation and terminologies:}}
We work with finite dimensional vector space $\R^p$ or $\R^n$ endowed with standard inner product $x^{\top}y$ or $\iprods{x, y}$ and Euclidean norm $\norm{x}_2 := \sqrt{x^{\top}x}$.
We denote by $\Sdp^p_{+}$ (resp., $\Sdp^p_{++})$ the set of symmetric positive semidefinite matrices (resp., symmetric positive definite matrices).
Given $H\in\Sdp^p_{++}$, we define a weighted norm $\norms{u}_H := \left(u^{\top}Hu\right)^{1/2}$ and its dual norm $\norms{v}_{H}^{\ast} := \left(v^{\top}H^{-1}v\right)^{1/2}$ for any vectors $u$ and $v$ in $\R^p$.
For $X, Y\in\Sdp^p_{+}$, $X\preceq Y$ means that $Y-X\in\Sdp^p_{+}$ and $X\succeq Y$ stands for $X-Y\in\Sdp^p_{+}$.

Given a three-time differentiable and strictly convex function $f$, we define the following local norms for any $u$ and $v$ in $\R^p$:
\begin{equation}\label{eq:local_norms}
\norms{u}_x := \left(u^{\top}\nabla^2{f}(x)u\right)^{1/2}, ~~~\text{and}~~~\norms{v}_x^{\ast} := \left(v^{\top}\nabla^2{f}(x)^{-1}v\right)^{1/2}.
\end{equation}
They also satisfy the Cauchy-Schwarz inequality, i.e. $u^{\top}v \leq \norms{u}_x\norms{v}_x^{\ast}$.
We say that $f$ is $\mu_f$-strongly convex if $f(\cdot) - (\mu_f/2)\norms{\cdot}^2$ remains convex.
We also often use the following two convex functions: $\omega(\tau) := \tau - \ln(1+\tau)$ for $\tau \geq 0$, and $\omega_{\ast}(\tau) := -\tau - \ln(1 - \tau)$ for $\tau \in [0, 1)$.
These functions are smooth and strictly convex.
We also use $\BigO{\cdot}$ to denote big-O complexity notion.

\beforesubsec
\subsection{\bf The primal problem and basic assumptions}
\aftersubsec
Consider the following constrained composite convex optimization problem:
\begin{equation}\label{eq:constr_cvx}
P^{\star} := \min_{x\in\Kc}\Big\{ P(x) := g(x)  + \phi(Ax) \Big\},
\end{equation}
where $g : \R^p\to\R$ is a smooth and convex function, $\phi : \R^n\to\Rext$ is a proper, closed,  and convex function, $A\in\R^{n\times p}$, and $\Kc$ is a nonempty, closed, and convex set in $\R^p$. 
As a special case of \eqref{eq:constr_cvx}, if we choose $\phi := \delta_{\Cc}$, the indicator of a nonempty, closed, and convex set $\Cc$ in $\R^n$, then \eqref{eq:constr_cvx} reduces to the following general constrained convex problem:
\begin{equation}\label{eq:constr_cvx2}
g^{\star} := \min_{x\in\Kc}\Big\{ g(x) ~~~\mathrm{s.t.}~~Ax \in \Cc \Big\}.
\end{equation}
Without loss of generality, we can also assume that $g$ and $\Kc$ possess a separable structure as follows:
\vspace{-0.75ex}
\begin{equation}\label{eq:separable_structure}
g(x) := \sum_{i=1}^Ng_i(x_i)~~~\text{and}~~~\Kc := \Kc_1\times \cdots \times \Kc_N,
\vspace{-0.75ex}
\end{equation}
for $N\geq 1$, where $x_i\in\R^{p_i}$, $\Kc_i\subseteq\R^{p_i}$, and $\sum_{i=1}^Np_i = p$ for $i=1,\cdots, N$.

Note that the separable structure \eqref{eq:separable_structure} frequently appears in graph and network optimization.
It is also a natural structure in conic programming such as linear programming and monotropic programming \cite{Rockafellar1985}.
Another example is convex empirical minimization models in statistical learning, which can also be reformulated into \eqref{eq:constr_cvx} by duplicating variables.

\beforepara
\paragraph{\textbf{Basic assumptions:}}
Our approach relies on the following assumptions:
\begin{assumption}\label{as:A0}
The optimal solution set $\Xopt$ of \eqref{eq:constr_cvx} is nonempty, and hence the optimal value $P^{\star}$ is finite. 
The following Slater condition holds:
\vspace{-0.75ex}
\begin{equation}\label{eq:Slater_cond}
0 \in  \ri{\dom{\phi} - A(\dom{g}\cap\Kc)},
\vspace{-0.75ex}
\end{equation}
where $\ri{\Zc}$ is the relative interior of $\Zc$, and $\dom{\cdot}$ is the domain of $(\cdot)$.
\end{assumption}

\begin{assumption}\label{as:A1}
The function $g$ is standard self-concordant as in Definition~\ref{de:gsc_def}. 
$\Kc$ is endowed with a $\nu_f$-self-concordant barrier $f$ as in Definition~\ref{de:scb_function}
and $A$ is full-row rank.
\end{assumption} 

Note that Assumption~\ref{as:A0} is standard and required in any primal-dual optimization method to guarantee strong duality.
Assumption~\ref{as:A1} is also not restrictive. 
First, the self-concordance of $g$ can be relaxed to a broader class called generalized self-concordant function as shown in Proposition~\ref{pro:conjugate} with additional structures.
Next, the full-row rankness of $A$ can always be obtained by eliminating redundant rows.
Finally, the self-concordant barrier of $\Kc$ is always guaranteed under mild condition as discussed in \cite{Nesterov1994}.

Throughout this paper, we assume that both Assumptions~\ref{as:A0} and \ref{as:A1} hold without recalling them in the sequel.

\beforesubsec
\subsection{\bf Dual problem and optimality condition}\label{subsec:pd_theory}
\aftersubsec
The dual problem associated with \eqref{eq:constr_cvx} can be written as
\vspace{-0.75ex}
\begin{equation}\label{eq:dual_prob}
D^{\star} := \min_{y\in\R^n}\bigg\{ D(y) := \underbrace{\max_{x\in\Kc}\bigg\{ \iprods{Ax, y} - g(x) \bigg\}}_{d(y)} ~+~ \phi^{\ast}(-y) \bigg\},
\vspace{-0.75ex}
\end{equation}
where $\phi^{\ast}(\cdot) := \sup_{u}\set{\iprods{\cdot, u} - \phi(u)}$ is the Fenchel conjugate of $\phi$.
Under the separable structure \eqref{eq:separable_structure}, we can decompose the dual function $d$ into $N$ functions $d_i$ on smaller spaces $\R^{p_i}$.
That is 
\vspace{-0.75ex}
\begin{equation*}
d(y) := \sum_{i=1}^Nd_i(y) ~~~~\text{with}~~~d_i(y) := \max_{x_i\in\Kc_i}\Big\{ \iprods{A_ix_i, y} - g_i(x_i) \Big\}.
\vspace{-0.75ex}
\end{equation*}
This computation can be carried out in parallel.
Moreover, under Assumption~\ref{as:A0}, the dual optimal solution set $\Yopt$ of \eqref{eq:dual_prob} is nonempty, and the strong duality holds, i.e. $P^{\star} + D^{\star} = 0$.
The optimality condition of the primal problem \eqref{eq:constr_cvx} can be written as
\vspace{-0.75ex}
\begin{equation}\label{eq:opt_cond}
\left\{\begin{array}{lll}
0 &\in \nabla{g}(x^{\star}) - A^{\top}y^{\star} + \Nc_{\Kc}(x^{\star}), ~~~&\text{(primal optimality)} \vspace{1ex}\\
0 & \in  y^{\star}  + \partial{\phi}(Ax^{\star}) &\text{(dual optimality)}\vspace{1ex}\\
x^{\star} &\in \Kc, &\text{(primal feasibility)}.
\end{array}\right.
\vspace{-0.75ex}
\end{equation}
Under Assumption~\ref{as:A0}, \eqref{eq:opt_cond} is the necessary and sufficient condition for $x^{\star} \in \Xc^{\star}$ to be a primal optimal solution of \eqref{eq:constr_cvx}, and $y^{\star}\in\Yc^{\star}$ to be a dual optimal solution of \eqref{eq:dual_prob}.
Note that  $0 \in y^{\star} + \partial{\phi}(Ax^{\star})$ can be written as
\vspace{-0.75ex}
\begin{equation}\label{eq:dual_optimality_cond}
0 \in Ax^{\star} - \partial{\phi^{\ast}}(-y^{\star}) \equiv \nabla{d}(y^{\star})  - \partial{\phi^{\ast}}(-y^{\star}).
\vspace{-0.75ex}
\end{equation}
This is exactly the optimality condition of the dual problem \eqref{eq:dual_prob}.
Our goal is to approximate a primal-dual solution of \eqref{eq:constr_cvx} and \eqref{eq:dual_prob} in the sense of Definition~\ref{de:primal_dual_approx_sols}.

\beforesec
\section{Generalized self-concordance and self-concordant barriers}\label{sec:gsc}
\aftersec
Let us review the theory of generalized self-concordant functions \cite{SunTran2017gsc} and self-concordant barriers \cite{Nesterov2004,Nesterov1994}, which will be used in the sequel.

\beforepara
\paragraph{\textbf{Generalized self-concordance and standard self-concordance:}}
Assume that $f : \dom{f}\subseteq\R^p\to\R$ is a three-time continuously differentiable convex function, i.e. $f\in\mathbb{C}^3(\dom{f})$, we use $\nabla^3{f}(x)[u]$ to denote the third order derivative of $f$ at $x\in\dom{f}$ along a direction $u\in\R^p$.
We recall the following definition \cite{SunTran2017gsc}. 

\begin{definition}[\cite{SunTran2017gsc}]\label{de:gsc_def}
A $\mathbb{C}^3$-convex function $f$ is said to be $(M_f,\theta)$-generalized self-concordant with the parameter $M_f \geq 0$, and order $\theta > 0$ if 
\vspace{-0.75ex}
\begin{equation}\label{eq:gsc_inequality}
\vert \iprods{\nabla^3{f}(x)[v]u, u}\vert \leq M_f\norms{u}_x^2\norms{v}_x^{\theta-2}\norms{v}_2^{3-\theta},~x\in\dom{f},~u, v\in\R^p,
\vspace{-0.75ex}
\end{equation}
where we use the convention $\frac{0}{0} = 0$ for the case $\theta < 2$ and $\theta > 3$.
If $\theta = 3$, then $f$ reduces to the self-concordant function defined by Nesterov and Nemirovskii in \cite{Nesterov1994}.
If $\theta = 3$ and $M_f = 2$, then $f$ is said to be standard self-concordant.
\end{definition}

\beforepara
\paragraph{\textbf{Basic properties:}}
Basic and fundamental properties as well as examples of generalized self-concordant functions can be found in \cite{SunTran2017gsc}.
We recall the following Legendre conjugate of a generalized self-concordant function.
Let $f : \dom{f}\to\R$ be an $(M_f,\theta)$-generalized self-concordant function, we define
\begin{equation}\label{eq:L_conjugate}
f^{\ast}(y) :=  \sup_{x\in\dom{f}}\set{ -y^{\top}x - f(x)},
\end{equation}
the Legendre conjugate of $f$ (i.e. $f^{\ast}(-y)$ is the Fenchel conjugate of $f$).
For generalized self-concordant functions and their conjugates, we have the following result. 

\begin{proposition}\label{pro:conjugate}
\begin{itemize}
\item[$\mathrm{(a)}$]
If $f$ is $(M_f,\theta)$-generalized self-concordant with $\theta \in (0, 3)$ and $\mu_f$-strongly convex w.r.t. the Euclidean norm $\norms{\cdot}_2$, then $f$ is $\hat{M}_f$-self-concordant with $\hat{M}_f := \mu_f^{\frac{\theta-3}{2}}M_f$.
\item[$\mathrm{(b)}$]
If $f$ is an $(M_f,\theta)$-generalized self-concordant function with $\theta \in [3, 6)$, then its Legendre conjugate $f^{\ast}(\cdot)$ is also $(M_f,\theta_{\ast})$-generalized self-concordant with $\theta_{\ast} := 6 - \theta$.
\item[$\mathrm{(c)}$]
Assume that $f$ is  $M_f$-self-concordant on $\dom{f}$  and $g$ is nonlinear and $(M_g,\theta)$-generalized self-concordant on $\dom{g}$ with $\theta\in (0, 3]$.
If $\dom{f}\cap\dom{g}$ is nonempty, closed, and bounded, then $h := f + g$ is  $M_h$-self-concordant with $M_h := \max\set{M_f, \hat{M}_g}$, where $\hat{M}_g := \mu_g^{\frac{\theta-3}{2}}M_g$ and
\begin{equation}\label{eq:mu_g}
\mu_g := \min\set{ \lambda_{\min}( \nabla^2{g}(x)) \mid x\in\dom{f}\cap\dom{g} } \in (0, +\infty),
\end{equation}
if $\theta < 3$, and $\hat{M}_g := M_g$ if $\theta = 3$.
\end{itemize}
\end{proposition}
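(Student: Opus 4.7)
The strategy is to reduce each statement to the pointwise inequality \eqref{eq:gsc_inequality} and to exploit the interplay between the Euclidean norm $\norms{\cdot}_2$ and the local norm $\norms{\cdot}_x$ induced by $\nabla^2{f}(x)$. For part~(a), I would observe that $\mu_f$-strong convexity with respect to $\norms{\cdot}_2$ gives $\nabla^2{f}(x) \succeq \mu_f \Id$, and hence $\norms{v}_2 \le \mu_f^{-1/2}\norms{v}_x$ for every $v$. Since $3-\theta > 0$, inserting this bound into \eqref{eq:gsc_inequality} collapses the two exponents on $v$ into $(\theta-2)+(3-\theta) = 1$ and produces the prefactor $M_f\mu_f^{(\theta-3)/2}$, which is exactly $\hat{M}_f$.

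For part~(b), I would start from the first-order condition $\nabla{f}(x) = -y$ attached to the supremum in \eqref{eq:L_conjugate}, which gives $\nabla{f^*}(y) = -x$ and, via implicit differentiation, $\nabla^2{f^*}(y) = H(x)^{-1}$ with $H(x) := \nabla^2{f}(x)$. Differentiating once more and using $\tfrac{dx}{dy} = -H(x)^{-1}$, one obtains the third-derivative identity
\begin{equation*}
\iprods{\nabla^3{f^*}(y)[w]u, u} \;=\; \iprods{\nabla^3{f}(x)[\tilde{w}]\,\tilde{u}, \tilde{u}},\qquad \tilde{u} := H(x)^{-1}u,\ \ \tilde{w} := H(x)^{-1}w.
\end{equation*}
Applying \eqref{eq:gsc_inequality} to the right-hand side and using the metric identity $\norms{\tilde{u}}_x = \norms{u}_x^{\ast}$ (which is exactly the local norm of $f^*$ at $y$), and similarly for $\tilde{w}$, leaves a single leftover factor $\norms{H(x)^{-1}w}_2^{3-\theta}$. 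The remaining key step is Cauchy--Schwarz in the form $(\norms{w}_x^{\ast})^2 = w^{\top}H(x)^{-1}w \le \norms{w}_2\cdot\norms{H(x)^{-1}w}_2$. Since $\theta - 3 \ge 0$, raising this to the power $\theta-3$ and substituting reproduces exactly \eqref{eq:gsc_inequality} for $f^*$ with order $\theta_* := 6-\theta$ and the same constant $M_f$.

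For part~(c), the plan is to first upgrade $g$ to a standard self-concordant function on $\dom{f}\cap\dom{g}$ and then to invoke additivity of self-concordance under sums. Compactness of $\dom{f}\cap\dom{g}$, together with continuity of $\lambda_{\min}(\nabla^2{g}(\cdot))$ and the strict convexity built into the generalized self-concordance of the nonlinear $g$, guarantees that $\mu_g > 0$ in \eqref{eq:mu_g}; the argument of part~(a), applied pointwise on $\dom{f}\cap\dom{g}$, then yields the standard self-concordance inequality for $g$ with constant $\hat{M}_g$. For the sum $h = f + g$, setting $\alpha_i := (\norms{u}_x^{(i)})^2$ and $\beta_i := \norms{v}_x^{(i)}$ in the local metrics of $f_1 := f$ and $f_2 := g$, one has $(\norms{u}_x^{(h)})^2 = \alpha_1 + \alpha_2$ and $(\norms{v}_x^{(h)})^2 = \beta_1^2 + \beta_2^2$, and Cauchy--Schwarz gives $\alpha_1\beta_1 + \alpha_2\beta_2 \le \sqrt{\alpha_1^2 + \alpha_2^2}\sqrt{\beta_1^2 + \beta_2^2} \le (\alpha_1+\alpha_2)\,\norms{v}_x^{(h)}$. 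Factoring out $\max\set{M_f,\hat{M}_g}$ after the triangle inequality for the third derivatives then delivers the self-concordance inequality for $h$ with constant $M_h$.

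The main obstacle is the third-derivative identity in part~(b): establishing it cleanly requires implicit differentiation of the inverse Hessian relation and careful sign bookkeeping through the Legendre transform, after which the rest of (b) reduces to algebra closed by a single Cauchy--Schwarz step. A secondary technicality in (c) is to guarantee that $\mu_g > 0$ is actually attained, which relies on compactness of $\dom{f}\cap\dom{g}$ and continuity of $\lambda_{\min}(\nabla^2{g}(\cdot))$ on this set.
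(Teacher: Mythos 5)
Your proposal is correct, but it is substantially more self-contained than the paper's own proof: the paper dispatches parts (a) and (b) by citing \cite[Propositions 4 and 6]{SunTran2017gsc}, whereas you reconstruct both arguments from first principles. Your reconstructions are faithful — for (a) the step $\nabla^2 f \succeq \mu_f\Id \Rightarrow \norms{v}_2 \le \mu_f^{-1/2}\norms{v}_x$, raised to the power $3-\theta > 0$, is exactly what collapses the mixed exponents into the self-concordance form; for (b) the third-derivative identity $\nabla^3 f^{\ast}(y) = H(x)^{-1}\,\nabla^3 f(x)[H(x)^{-1}\cdot]\,H(x)^{-1}$ (with the sign cancellation you note, since $\tfrac{dx}{dy} = -H(x)^{-1}$), followed by the Cauchy--Schwarz lower bound $\norms{H(x)^{-1}w}_2 \ge (\norms{w}_x^{\ast})^2/\norms{w}_2$, is precisely how the exponent bookkeeping closes to give order $6-\theta$.

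For part (c), you actually supply a step the paper leaves implicit: the paper says only ``applying statement (a) to $g$ gives (c)'', which establishes that $g$ is $\hat{M}_g$-self-concordant but silently invokes a sum rule for self-concordant functions. Your explicit Cauchy--Schwarz argument $\alpha_1\beta_1+\alpha_2\beta_2 \le \sqrt{\alpha_1^2+\alpha_2^2}\,\sqrt{\beta_1^2+\beta_2^2} \le (\alpha_1+\alpha_2)\norms{v}_x^{(h)}$ together with the triangle inequality on the third derivatives correctly closes this and yields the constant $\max\{M_f,\hat M_g\}$. The one place where you are slightly looser than ideal is the justification that $\mu_g > 0$: ``strict convexity built into the generalized self-concordance of the nonlinear $g$'' gestures at the right fact (the null space of $\nabla^2 g$ is constant over $\dom g$, so a nonlinear $g$ with a degenerate Hessian at one point would be affine in that direction everywhere), but does not spell it out. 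The paper is equally brief here, treating $\mu_g \in (0,\infty)$ as part of the hypothesis in \eqref{eq:mu_g}, so this is not a gap in your argument so much as a shared omission. Net effect: same mathematical content, but your version is the more instructive and more complete route.
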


\begin{proof}
The proof of statements (a) and (b) can be found in \cite[Propositions 4 and 6]{SunTran2017gsc}.
If $\dom{f}\cap\dom{g}$ is nonempty, closed, and bounded, then $g$ is also $\mu_g$-strongly convex on $\dom{f}\cap\dom{g}$ with $\mu_g$ defined by \eqref{eq:mu_g}.
Applying statement (a) to the strongly convex function $g$, we obtain statement (c).
\Eproof
\end{proof}
\beforepara
\paragraph{\textbf{Discussion:}}
Proposition~\ref{pro:conjugate} shows that the class of self-concordant functions can be extended to cover at least three classes of smooth convex functions.
The first one is the class of smooth and strongly convex functions that is also generalized self-concordant as studied in \cite{SunTran2017gsc}.
In the case it is not strongly convex, one can add a small quadratic regularizer to obtain this property.
The second class is the conjugate of generalized self-concordant functions with Lipschitz continuous gradient.
The third class of functions is generalized self-concordant functions on bounded domain.
We believe that these three classes of functions cover a sufficiently large class of applications, see \cite{SunTran2017gsc} for more detailed examples and additional properties.

\beforepara
\paragraph{\textbf{Standard self-concordant barriers:}}
Next, we recall the class of standard self-concordant barriers, and its properties.

\begin{definition}\label{de:scb_function}
Given a nonempty, closed, and convex set $\Kc$ in $\R^p$, we say that $f$ is a $\nu_f$-self-concordant barrier of $\Kc$ if $f$ is standard self-concordant on $\dom{f} \equiv\intx{\Kc}$, $f(x)\to+\infty$ as $x$ approaches the boundary $\partial{\Kc}$ of $\Kc$, and 
\begin{equation}\label{eq:scb_function}
\sup_{u\in\R^p}\set{ \nabla{f}(x)^{\top}u - \norms{u}_x^2} \leq \nu_f,~~\forall x\in\dom{f}.
\end{equation}
The self-concordant barrier $f$ is said to be a logarithmically homogeneous self-concordant barrier if $f(\tau x) = f(x) - \nu_f\ln(\tau)$ for any $\tau > 0$ and $x\in\dom{f}$.
\end{definition}

Given a self-concordant barrier of $\Kc$, we define $x_f^{\star} := \argmin_{x\in\Kc}f(x)$ the analytical center of $\Kc$ if $x_f^{\star}$ exists.
Clearly, if $\Kc$ is bounded, then $x_f^{\star}$ exists.
In addition to these properties, we also have $\norms{x - x^{\star}_f}_{x^{\star}_f} \leq \rho_f$ for any $x\in\dom{f}$, where $\rho_f := \nu_f + 2\sqrt{\nu_f}$ for general self-concordant barrier $f$ and $\rho_f := \nu_f$ if $f$ is logarithmically homogeneous.

\beforesec
\section{Barrier smoothing technique and inexact oracles}\label{sec:barrier_smoothing}
\aftersec
In this section, we describe a barrier smoothing technique for \eqref{eq:constr_cvx} which has been used in  \cite{Kojima1993,Necoara2009,Nesterov2011c,TranDinh2012c,Zhao2005}.
Without loss of generality, we can assume that $M_g = 2$, since any self-concordant function $g$ with the parameter $M_g > 0$, $(M_g^2/4)g$ is standard self-concordant.

\beforesubsec
\subsection{\bf Smoothed dual problem}\label{subsec:smooth_dual_problem}
\aftersubsec
Under Assumption~\ref{as:A1}, we consider the following self-concordant barrier smoothed dual problem of \eqref{eq:constr_cvx} (shortly, smoothed dual problem):
\vspace{-0.75ex}
\begin{equation}\label{eq:smoothed_dual_prob0}
{\!\!\!\!\!}
\overline{D}_t^{\star} := {\!\!} \min_{y\in\R^n}\Big\{ \overline{D}_t(y) := {\!\!\!} \underbrace{\max_{x\in\intx{\Kc}} {\!\!} \Big\{ y^{\top}Ax  - g(x) - t f(x) \Big\}}_{\bar{d}_t(y)}  + \underbrace{\phi^{\ast}(-y)}_{\bar{h}(y)} \Big\}.
{\!\!\!\!\!}
\vspace{-0.75ex}
\end{equation}
Note that $g(\cdot) + tf(\cdot)$ is self-concordant with the parameter $M_t := \max\set{2, \frac{2}{\sqrt{t}}}$ on $\dom{f}\cap\dom{g}$. To make it standard self-concordant, we rescale \eqref{eq:smoothed_dual_prob0} as follows:
\vspace{-0.75ex}
\begin{equation}\label{eq:smoothed_dual_prob}
D_t^{\star} := \min_{y\in\R^n}\Bigg\{D_t(y) := \underbrace{\tfrac{M_t^2}{4}\bar{d}_t(y)}_{d_t(y)}  {~} + {~} \underbrace{\tfrac{M_t^2}{4}\bar{h}(y)}_{h_t(y)}  \Bigg\}.
\vspace{-0.75ex}
\end{equation}
From \cite{Necoara2009} or \cite{Zhao2005}, $d_t$ is standard self-concordant. Clearly, if $t \in (0, 1]$, then $M_t = \frac{2}{\sqrt{t}}$.
In this case, we have $\bar{d}_t(y) = td_t(y)$ and $\bar{h}(y) = th_t(y)$.

To evaluate the (normalized) smoothed dual function $d_t$ and its derivative, we consider the following standard self-concordant function:
\vspace{-0.75ex}
\begin{equation}\label{eq:define_psi}
\psi_t(x;y) := \frac{M_t^2}{4}\big[g(x) + t f(x) - y^{\top}Ax\big].
\vspace{-0.75ex}
\end{equation}

\beforepara
\paragraph{\textbf{Primal local norms:}}
Note that $\nabla^2{\psi_t}(x;y) =  \tfrac{M_t^2}{4}\big[\nabla^2{g}(x) + t\nabla^2{f}(x)\big] = \nabla^2{\psi_t}(x)$ is symmetric positive definite on $\dom{g}\cap\dom{f}$ and independent of $y$.
Therefore, we define the following local norms on the primal space:
\begin{equation}\label{eq:x_local_norms}
\abss{u}_{x, t} := \left(u^{\top}\nabla^2{\psi_t}(x)u\right)^{1/2},~~~\text{and}~~~~\abss{v}_{x,t}^{\ast} := \left(v^{\top}\nabla^2{\psi_t}(x)^{-1}v\right)^{1/2},
\end{equation}
for any $u, v\in\R^p$. 
If $t \in (0, 1]$, then $\abss{u}_{x, t} = \left(u^{\top}\big[\nabla^2{f}(x) + \frac{1}{t}\nabla^2{g}(x)\big]u\right)^{1/2}$.

\beforepara
\paragraph{\textbf{Exact oracles of the dual function $d_t$:}}
We can summarize the properties of $d_t$ defined in \eqref{eq:smoothed_dual_prob} into the following proposition which we omit the proof.

\begin{proposition}\label{pro:properties_of_dt}
Under Assumption~\ref{as:A1}, $\psi_t(\cdot;y)$ defined by \eqref{eq:define_psi} and $d_t(\cdot)$ defined by \eqref{eq:smoothed_dual_prob} are standard self-concordant.
Moreover, if the following  primal subproblem has optimal solution
\begin{equation}\label{eq:barrier_subprob}
x^{\ast}_t(y) := \mathrm{arg}\!\!\!\!\!\min_{x\in\intx{\Kc}}\Big\{ \psi_t(x; y) := \tfrac{M_t^2}{4}\big[ g(x) + tf(x) - y^{\top}Ax \big]\Big\},
\end{equation}
then its solution is unique.
The optimality condition of this subproblem is 
\begin{equation}\label{eq:opt_cond_subprob}
\nabla{\psi}_t(x^{\ast}_t(y); y) \equiv  \tfrac{M_t^2}{4}\big[\nabla{g}(x^{\ast}_t(y)) + t\nabla{f}(x^{\ast}_t(y)) - A^{\top}y\big] = 0,
\end{equation}
which is necessary and sufficient for $x^{\ast}_t(y)$ to be optimal to \eqref{eq:barrier_subprob}.
The function value and derivatives of  $d_t$ in \eqref{eq:smoothed_dual_prob} can be evaluated as (see \cite{Nesterov1994})
\begin{equation}\label{eq:dt_and_derivatives}
\textbf{$($Exact oracles$)$:}~~ \left\{\begin{array}{ll}
d_t(y) & = -\psi_t(x^{\ast}_t(y); y), \vspace{1ex}\\
\nabla{d_t}(y) &=  \tfrac{M_t^2}{4}Ax^{\ast}_t(y), \vspace{1ex}\\
\nabla^2{d_t}(y) &= \frac{M_t^4}{16}A\nabla^2{\psi_t}(x^{\ast}_t(y))^{-1}A^{\top}.
\end{array}\right.
\end{equation}
\end{proposition}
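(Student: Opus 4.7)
The plan is to establish the four assertions (standard self-concordance of $\psi_t(\cdot;y)$ and of $d_t$, uniqueness of the inner maximizer, the first-order characterization \eqref{eq:opt_cond_subprob}, and the oracle formulas \eqref{eq:dt_and_derivatives}) by combining the scaling and additivity calculus for self-concordant functions with a standard envelope/implicit-function argument.

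First, after the normalization $M_g = 2$, the function $g$ is standard self-concordant and $f$ is a $\nu_f$-self-concordant barrier on $\intx{\Kc}$. The rescaling rule (if $h$ is $M_h$-self-concordant then $ch$ is $(M_h/\sqrt{c})$-self-concordant for $c > 0$) gives that $tf$ is $(2/\sqrt{t})$-self-concordant. The additivity rule then yields that $g + tf$ is $M_t$-self-concordant on $\dom{g}\cap\intx{\Kc}$ with $M_t = \max\{2, 2/\sqrt{t}\}$, and multiplying by $M_t^2/4$ rescales the parameter back to $2$; since the linear term $-y^\top Ax$ has vanishing third derivative, it does not affect self-concordance, so $\psi_t(\cdot;y)$ is standard self-concordant as claimed.

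Next, the Hessian $\nabla^2\psi_t(x) = \frac{M_t^2}{4}[\nabla^2 g(x) + t\nabla^2 f(x)]$ is positive definite on the open set $\dom{g}\cap\intx{\Kc}$, so $\psi_t(\cdot;y)$ is strictly convex, yielding uniqueness of $x^*_t(y)$ whenever a minimizer exists; the barrier property $f(x)\to+\infty$ as $x\to\partial\Kc$ confines the minimizer to the interior, where the stationarity condition \eqref{eq:opt_cond_subprob} is both necessary and sufficient by convexity.

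For the oracle formulas, the identity $d_t(y) = -\psi_t(x^*_t(y); y)$ is immediate from \eqref{eq:define_psi}--\eqref{eq:smoothed_dual_prob}. The gradient $\nabla d_t(y) = \frac{M_t^2}{4}Ax^*_t(y)$ then follows from Danskin's theorem, which applies thanks to strict convexity of $\psi_t(\cdot;y)$ and uniqueness of $x^*_t(y)$. For the Hessian, implicit differentiation of the identity $\nabla_x\psi_t(x^*_t(y); y) = 0$ in $y$ (using $\nabla^2_{yx}\psi_t = -\frac{M_t^2}{4}A^\top$) gives $\partial_y x^*_t(y) = \frac{M_t^2}{4}\nabla^2\psi_t(x^*_t(y))^{-1}A^\top$, after which the chain rule on $\nabla d_t$ produces the stated expression. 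The standard self-concordance of $d_t$ itself is the classical preservation result for partial Legendre transforms of self-concordant functions and can be quoted from \cite{Nesterov1994} or the versions in \cite{Necoara2009,Zhao2005}; the full-row rank of $A$ from Assumption~\ref{as:A1} ensures that the Schur-type Hessian $A\nabla^2\psi_t(x^*_t(y))^{-1}A^\top$ stays positive definite. The main technical care needed is tracking the $M_t^2/4$ rescaling consistently through all the formulas so that the final self-concordance parameter of $d_t$ lands at exactly $2$; everything else is routine convex-analytic bookkeeping.
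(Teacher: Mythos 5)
Your proof is correct and fills in exactly what the paper chose to omit (the paper states it "omits the proof" of this proposition, citing \cite{Nesterov1994} for the oracle formulas and noting just above the statement that the standard self-concordance of $d_t$ is quoted from \cite{Necoara2009,Zhao2005}). Your calculus for $\psi_t$ (rescaling $tf$ to parameter $2/\sqrt{t}$, additivity giving $M_t=\max\{2,2/\sqrt{t}\}$, and the final $M_t^2/4$ normalization back to parameter $2$), the strict-convexity argument for uniqueness, the envelope/Danskin computation of $\nabla d_t$, and the implicit differentiation of the stationarity system to get $\nabla^2 d_t$ are all accurate and are the standard route; in particular your Hessian computation matches the stated $\frac{M_t^4}{16}A\nabla^2\psi_t(x^*_t(y))^{-1}A^\top$, and you correctly defer the self-concordance of $d_t$ itself to the Legendre-transform preservation results in the cited references, exactly as the authors do.
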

\beforepara
\paragraph{\textbf{Dual local norms:}}
Since $\nabla^2{d_t}(y) \succ 0$,  we can define the following local norms in the dual space:
\begin{equation}\label{eq:dual_local_norm}
\norms{u}_{y,t}  := \big(u^{\top}{\nabla}^2{d_t}(y)u\big)^{1/2} ~~~\text{and}~~~\norms{v}_{y,t}^{\ast} := \big(v^{\top}{\nabla}^2{d_t}(y)^{-1}v\big)^{1/2}.
\end{equation}

\beforesubsec
\subsection{\bf Inexact oracles of the smoothed dual function}\label{subsec:inexact_oracles}
\aftersubsec
When $g$ and $\Kc$ are not trivial, solving the smoothed slave subproblem \eqref{eq:barrier_subprob} exactly is impractical.
We can only approximately solve  \eqref{eq:barrier_subprob} or \eqref{eq:opt_cond_subprob} up to a given accuracy as defined in the following.

\begin{definition}\label{de:inexact_sol_xt}
Let $x^{\ast}_t(y)$ be the exact solution of \eqref{eq:barrier_subprob} at $y \in\R^n$.
We call $\widetilde{x}^{\ast}_t(y)$ a $\delta$-(approximate) solution of \eqref{eq:barrier_subprob}  if $\delta_t(y) := \abss{\widetilde{x}^{\ast}_t(y) - x^{\ast}_t(y)}_{\widetilde{x}^{\ast}_t(y), t} \leq \delta$, where $\abss{\cdot}_{x,t}$ is defined by \eqref{eq:x_local_norms}.
\end{definition}

Given an inexact solution $\widetilde{x}^{\ast}_t(y)$ of \eqref{eq:barrier_subprob} as defined in Definition~\ref{de:inexact_sol_xt}, we define an inexact oracle of $d_t$ as follows:
\begin{equation}\label{eq:inexact_oracle}
\textbf{$($Inexact oracles$)$:}~~\left\{\begin{array}{ll}
\widetilde{d}_t(y) &= -\psi_t(\widetilde{x}^{\ast}_t(y); y) , \vspace{1ex}\\
\widetilde{\nabla}{d_t}(y) &= \tfrac{M_t^2}{4} A\widetilde{x}^{\ast}_t(y), \vspace{1ex}\\
\widetilde{\nabla}^2{d_t}(y) &= \frac{M_t^4}{16}A\nabla^2{\psi_t}(\widetilde{x}^{\ast}_t(y))^{-1}A^{\top}.
\end{array}\right.
\end{equation}
Since $\nabla^2\psi_t(\cdot)$ is positive definite and $A$ is full-row rank, $\widetilde{\nabla}^2{d_t}(y)$ is positive definite.
Now we define the following local norms using inexact oracles:
\begin{equation}\label{eq:inexact_dual_local_norm}
\tnorms{u}_{y,t}  := \big(u^{\top}\widetilde{\nabla}^2{d_t}(y)u\big)^{1/2} ~~~~\text{and}~~~~\tnorms{v}_{y,t}^{\ast} := \big(v^{\top}\widetilde{\nabla}^2{d_t}(y)^{-1}v\big)^{1/2}.
\end{equation}

We first prove some properties of inexact solution $\widetilde{x}^{\ast}_t(y)$ and inexact oracles of $d_t$ defined by \eqref{eq:inexact_oracle} in the following proposition, whose proof can be found in Appendix~\ref{apdx:pro:inexact_oracle}.

\begin{proposition}\label{pro:inexact_oracle}
For any $\delta \in [0, 1)$, we have:
\begin{equation}\label{eq:approx_cond}
\textrm{if}~~\abss{\nabla\psi_t({\widetilde{x}^{\ast}_t(y)};y)}_{\widetilde{x}^{\ast}_t(y), t}^{\ast}  \leq \tfrac{\delta}{1 + \delta} ~~\text{then}~~\delta_t(y) := \abss{\widetilde{x}^{\ast}_t(y) - x^{\ast}_t(y)}_{\widetilde{x}^{\ast}_t(y), t} \leq \delta.
\end{equation}

In addition, for $d_t$ and its derivatives defined by \eqref{eq:dt_and_derivatives}, and its inexact oracle defined by \eqref{eq:inexact_oracle}, the following properties hold
\begin{equation}\label{eq:inexact_oracle_properties}
\left\{\begin{array}{ll}
&0 \leq \omega\left(\frac{\delta_t(y)}{1+\delta_t(y)}\right) \leq d_t(y) - \widetilde{d}_t(y) \leq \omega_{\ast}\left(\frac{\delta_t(y)}{1-\delta_t(y)}\right), \vspace{1.25ex}\\
&\big(1 -\delta_t(y)\big)^2\widetilde{\nabla}^2{d_t}(y) \preceq \nabla^2{d_t}(y) \preceq \big(1 - \delta_t(y)\big)^{-2}\widetilde{\nabla}^2{d_t}(y), \vspace{1.25ex}\\
&\tnorms{\widetilde{\nabla}{d_t}(y) - \nabla{d_t}(y)}_{y,t}^{\ast} \leq \delta_t(y),
\end{array}\right.
\end{equation}
where $\omega(\tau) := \tau - \ln(1+\tau)$ for $\tau \geq 0$ and $\omega_{\ast}(\tau) := -\tau - \ln(1 - \tau)$ for $\tau \in [0, 1)$.
\end{proposition}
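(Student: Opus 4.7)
The proof is a direct application of the Nesterov--Nemirovskii self-concordant calculus to $\psi_t(\cdot;y)$, which by Proposition~\ref{pro:properties_of_dt} is standard self-concordant on $\intx{\Kc}\cap\dom{g}$ with unique minimizer $x^{\ast} := x^{\ast}_t(y)$ satisfying $\nabla\psi_t(x^{\ast};y) = 0$. Throughout I abbreviate $\widetilde{x} := \widetilde{x}^{\ast}_t(y)$ and $\delta := \delta_t(y)$.

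For the implication \eqref{eq:approx_cond}, the quantity $\lambda := \abss{\nabla\psi_t(\widetilde{x};y)}_{\widetilde{x},t}^{\ast}$ is exactly the Newton decrement of $\psi_t(\cdot;y)$ at $\widetilde{x}$. The classical region-of-quadratic-convergence bound for standard self-concordant functions states that whenever $\lambda < 1$ one has $\abss{\widetilde{x} - x^{\ast}}_{\widetilde{x},t} \le \lambda/(1-\lambda)$; substituting the hypothesis $\lambda \le \delta/(1+\delta)$ simplifies to $\delta_t(y) \le \delta$.

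For the three estimates in \eqref{eq:inexact_oracle_properties} I would first record two preliminary consequences of $\delta < 1$: the Hessian sandwich
\begin{equation*}
(1-\delta)^{2}\nabla^{2}\psi_t(\widetilde{x}) \preceq \nabla^{2}\psi_t(x^{\ast}) \preceq (1-\delta)^{-2}\nabla^{2}\psi_t(\widetilde{x}),
\end{equation*}
together with the two-sided norm comparison $\delta/(1+\delta) \le \abss{\widetilde{x} - x^{\ast}}_{x^{\ast},t} \le \delta/(1-\delta)$. The first estimate then follows from the identity $d_t(y) - \widetilde{d}_t(y) = \psi_t(\widetilde{x};y) - \psi_t(x^{\ast};y) \ge 0$ combined with the standard self-concordant inequalities $\omega(\abss{\widetilde{x} - x^{\ast}}_{x^{\ast},t}) \le \psi_t(\widetilde{x};y) - \psi_t(x^{\ast};y) \le \omega_{\ast}(\abss{\widetilde{x} - x^{\ast}}_{x^{\ast},t})$ (the right-hand inequality requiring the argument to lie in $[0,1)$), followed by monotonicity of $\omega$ and $\omega_{\ast}$ applied to the two-sided comparison. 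The second (Hessian) estimate drops out by inverting the Hessian sandwich and conjugating with $A$, using the formulas in \eqref{eq:dt_and_derivatives} and \eqref{eq:inexact_oracle}.

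For the third estimate, set $\Delta := \widetilde{\nabla}d_t(y) - \nabla d_t(y) = \tfrac{M_t^{2}}{4}A(\widetilde{x} - x^{\ast})$ and $H := \nabla^{2}\psi_t(\widetilde{x})$; expanding $\tnorms{\Delta}_{y,t}^{\ast}$ cancels the $M_t^{4}/16$ factors and leaves $(\tnorms{\Delta}_{y,t}^{\ast})^{2} = (\widetilde{x}-x^{\ast})^{\top}A^{\top}(AH^{-1}A^{\top})^{-1}A(\widetilde{x}-x^{\ast})$. Assumption~\ref{as:A1} makes $AH^{-1}A^{\top}$ invertible, and with $B := H^{-1/2}A^{\top}$ one recognises $H^{-1/2}A^{\top}(AH^{-1}A^{\top})^{-1}AH^{-1/2} = B(B^{\top}B)^{-1}B^{\top}$ as an orthogonal projection, hence $\preceq I$; conjugating by $H^{1/2}$ gives $A^{\top}(AH^{-1}A^{\top})^{-1}A \preceq H$, which bounds the quadratic form above by $(\widetilde{x}-x^{\ast})^{\top}H(\widetilde{x}-x^{\ast}) = \delta^{2}$, as required. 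The main obstacle is the lower comparison $\abss{\widetilde{x}-x^{\ast}}_{x^{\ast},t} \ge \delta/(1+\delta)$ invoked in the $\omega$-bound of the first estimate: the naive Hessian sandwich at $\widetilde{x}$ only produces the strictly weaker $\delta(1-\delta)$, so one must integrate the scalar self-concordance estimate $|\dot{\phi}(t)| \le \phi(t)^{2}$ along the segment $[x^{\ast},\widetilde{x}]$ with $\phi(t) := \abss{\widetilde{x}-x^{\ast}}_{x^{\ast}+t(\widetilde{x}-x^{\ast}),t}$ to obtain $1/\phi(0) - 1/\phi(1) \le 1$ and thus the sharp constant.
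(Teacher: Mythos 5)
Your proof is correct and follows essentially the same route as the paper's: derive the Newton-decrement bound \eqref{eq:approx_cond} from the self-concordant gradient monotonicity (the paper does this by hand via Cauchy--Schwarz, you cite the equivalent region-of-quadratic-convergence result), deduce the Hessian comparison from Theorem~4.1.6, and bound the gradient error with the projection inequality $A^{\top}(AH^{-1}A^{\top})^{-1}A\preceq H$. The one place you go beyond what the paper writes is worth keeping: the paper invokes ``Theorems 4.1.7 and 4.1.8'' for the first line of \eqref{eq:inexact_oracle_properties}, which bound $\psi_t(\widetilde{x})-\psi_t(x^{\ast})$ by $\omega$ and $\omega_{\ast}$ of $\abss{\widetilde{x}-x^{\ast}}_{x^{\ast},t}$, and silently relies on the conversion $\frac{\delta_t(y)}{1+\delta_t(y)}\le\abss{\widetilde{x}-x^{\ast}}_{x^{\ast},t}\le\frac{\delta_t(y)}{1-\delta_t(y)}$; you correctly observe that the coarse Hessian sandwich gives only the weaker lower bound $(1-\delta)\delta$, and that the sharp constant $\delta/(1+\delta)$ requires the $\abs{\tfrac{d}{d\tau}(1/\phi)}\le 1$ integration (or, equivalently, the gradient-monotonicity inequality applied at $x^{\ast}$ rather than at $\widetilde{x}$). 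Your derivation is therefore a slightly more self-contained version of the paper's argument, not a different one.
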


\noindent\textbf{Discussion:}
The first estimate \eqref{eq:approx_cond} shows that to obtain an approximate solution $\widetilde{x}^{\ast}_t(y)$ such that $\abss{\widetilde{x}^{\ast}_t(y) - x^{\ast}_t(y)}_{\widetilde{x}^{\ast}_t(y), t} \leq \delta$, we need to solve the slave problem \eqref{eq:barrier_subprob} such that
\begin{equation}\label{eq:primal_stopping}
\abss{\nabla{g}(\widetilde{x}^{\ast}_t(y)) + t\nabla{f}(\widetilde{x}^{\ast}_t(y)) - A^{\top}y}_{\widetilde{x}^{\ast}_t(y), t}^{\ast} \leq \frac{4\delta}{M_t^2(1+\delta)}.
\end{equation}
This condition is implementable, e.g., when we apply a Newton-type method to solve the nonlinear system \eqref{eq:opt_cond_subprob}.
The estimates in \eqref{eq:inexact_oracle_properties} show us how the inexact oracles in \eqref{eq:inexact_oracle} approximate the exact ones in \eqref{eq:dt_and_derivatives}.

\beforepara
\paragraph{\textbf{Approximate primal-dual solutions:}}
Given an accuracy $\varepsilon > 0$, our goal is to compute an $\varepsilon$-approximate primal-dual solution $(\tilde{x}^{\star}, \tilde{y}^{\star})$ to $(x^{\star}, y^{\star})$ of \eqref{eq:opt_cond} in the following sense:

\begin{definition}\label{de:primal_dual_approx_sols}
A pair $(\tilde{x}^{\star}, \tilde{y}^{\star})$  is called an $\varepsilon$-approximate primal-dual solution to an exact primal-dual one $(x^{\star}, y^{\star})$ of \eqref{eq:opt_cond} if 
\begin{equation}\label{eq:approx_opt_cond}
\left\{\begin{array}{lll}
&\abss{ A^{\top}\tilde{y}^{\star} - \nabla{g}(\tilde{x}^{\star})}_{\tilde{x}^{\star}, t}^{\ast} \leq \varepsilon ~~~~~~~&\text{($\varepsilon$-primal optimality)}, \vspace{1ex}\\
&r \in \tilde{y}^{\star} + \partial{\phi}(A\tilde{x}^{\star} + e) ~~~~~&\text{($\varepsilon$-dual  optimality)},\vspace{1ex}\\
&\tilde{x}^{\star} \in \intx{\Kc}  ~~~~~~&\text{(primal feasibility)}, \vspace{1ex}\\
&~~\tnorms{e}_{\tilde{y}^{\star}, t}^{\ast} \leq \varepsilon~~~\text{and}~~~\abss{A^{\top}r}_{\tilde{x}^{\star},t}^{\ast} \leq \varepsilon.
\end{array}\right.
\end{equation}
\end{definition}
Here, the errors are measured through local norms in primal and dual spaces defined in \eqref{eq:x_local_norms} and \eqref{eq:inexact_dual_local_norm}.
These norms are computable since they are defined through $\tilde{x}^{\star}$ and $\tilde{y}^{\star}$.
In addition, since $A$ is full-row rank, $A^{\top}r = 0$ if and only if $r = 0$.
Because $\tilde{x}^{\star} \in \intx{\Kc}$, we have $\Nc_{\Kc}(\tilde{x}^{\star}) = \set{\boldsymbol{0}}$. Therefore the first line of \eqref{eq:approx_opt_cond} can approximate the first line of \eqref{eq:opt_cond}.
Similarly, the second line of \eqref{eq:approx_opt_cond} approximates the second line of \eqref{eq:opt_cond}, i.e. $0 \in  y^{\star} + \partial{\phi}(Ax^{\star})$. 
Therefore, Definition~\ref{de:primal_dual_approx_sols} is consistent with the optimality condition~\eqref{eq:opt_cond}.

\beforesec
\section{Inexact IPLD Method with Inexact Oracles}\label{sec:dual_decomp_alg}
\aftersec
We develop an inexact interior-point Lagrangian decomposition method to solve \eqref{eq:constr_cvx} by using the inexact oracles \eqref{eq:inexact_oracle}.

\beforesubsec
\subsection{\bf Inexact proximal-Newton method for \eqref{eq:smoothed_dual_prob}}\label{subsec:inexact_newton_dual}
\aftersubsec
\noindent\textbf{The optimality condition of \eqref{eq:smoothed_dual_prob}:}
Recall the smoothed dual problem \eqref{eq:smoothed_dual_prob}, its optimality condition is
\begin{equation}\label{eq:dual_opt_cond}
0\in\nabla{d_t}(y) + \partial h_t(y) = \frac{M_t^2}{4}Ax^{\ast}_t(y)  +\partial h_t(y).
\end{equation}
Any $y_t^{\ast}$ satisfies \eqref{eq:dual_opt_cond} is an optimal solution of \eqref{eq:smoothed_dual_prob}.
The sequence $\set{ (x^{\ast}_{t}(y^{\ast}_t), y^{\ast}_t)}_{t\geq 0}$ forms a central path, which converges to $(x^{\star}, y^{\star})$ a primal-dual solution of \eqref{eq:constr_cvx}.

\noindent\textbf{Exact Proximal Newton scheme:}
Suppose that we are currently at $y^k$, since $d_t$ is twice differentiable, we will apply proximal-Newton method to compute $\bar{y}^{k+1}$, which leads to 
\begin{equation}\label{eq:dual_newton_scheme} 
0 \in \widetilde{\nabla}^2{d_{t_{k+1}}}(y^k)(\bar{y}^{k+1} - y^k)  + \widetilde{\nabla}{d_{t_{k+1}}}(y^k) + \partial{h_{t_{k+1}}}(\bar{y}^{k+1}).
\end{equation}
If we define
\begin{equation}\label{eq:Pk_t}
Q_{t_{k+1}}(y) := \iprods{\widetilde\nabla{d_{t_{k+1}}}(y^k), y - y^k} +  \frac{1}{2} \iprods{\widetilde\nabla^2{d_{t_{k+1}}}(y^k)(y - y^k), y - y^k} + h_{t_{k+1}}(y),
\end{equation}
then we can write $\bar{y}^{k+1} := \argmin_{y}Q_{t_{k+1}}(y)$.
Introducing the notation $\prox_{h_t}^{\widetilde\nabla^2{d_t}}(\cdot)$, we can write \eqref{eq:dual_newton_scheme} in the following form (see \cite{TranDinh2016c} for a concrete definition)
\begin{equation}\label{eq:dual_newton_scheme2}
\bar{y}^{k+1} := \prox_{h_{t_{k+1}}}^{\widetilde{\nabla}^2{d_{t_{k+1}}}(y^k)}\left( y^k - \widetilde{\nabla}^2{d_{t_{k+1}}}(y^k)^{-1}\widetilde{\nabla}{d_{t_{k+1}}}(y^k)\right).
\end{equation}
\noindent\textbf{Inexact Proximal Newton scheme:}
Similarly, we can also approximately solve \eqref{eq:dual_newton_scheme} up to a given accuracy as.
\begin{equation}\label{eq:inexact_dual_newton_scheme2}
y^{k+1} :\approx \prox_{h_{t_{k+1}}}^{\widetilde{\nabla}^2{d_{t_{k+1}}}(y^k)}\left( y^k - \widetilde{\nabla}^2{d_{t_{k+1}}}(y^k)^{-1}\widetilde{\nabla}{d_{t_{k+1}}}(y^k)\right). 
\end{equation}
Here, the approximation ``$:\approx$'' is defined in the following sense:
\begin{definition}\label{de:inexact_sol}
For a given $\epsilon \geq 0$ and $Q_{t_{k+1}}$ defined by \eqref{eq:Pk_t}, a vector $y^{k+1}$ given in \eqref{eq:inexact_dual_newton_scheme2} is said to be an $\epsilon$-approximate solution to  $\bar{y}^{k+1}$ of \eqref{eq:dual_newton_scheme} if
\begin{equation}\label{eq:inexact_yk}
Q_{t_{k+1}}(y^{k+1}) - Q_{t_{k+1}}(\bar{y}^{k+1}) \leq \frac{\epsilon^2}{2}.
\end{equation}
\end{definition}
Note that \eqref{eq:inexact_yk} implies $\tnorm{y^{k+1} - \bar{y}^{k+1}}_{y^k, t_{k+1}} \leq \epsilon$. 
There exists several convex optimization methods to compute $y^{k+1}$ in \eqref{eq:inexact_dual_newton_scheme2}.
For example, we can apply accelerated proximal gradient methods such as FISTA \cite{Beck2009,Nesterov2007} to compute this point.
We can also apply semi-smooth Newton-CG augmented Lagrangian methods in \cite{li2018highly,zhao2010newton} to solve this problem.
We will discuss the computation of $y^{k+1}$ in detail in Section~\ref{sec:numerical_experiment}.

\noindent\textbf{Generalized  gradient mapping:}
Now let us define the following inexact generalized gradient mapping
\begin{equation}\label{eq:gradient_mapping}
\widetilde{G}_{t}(y) := \widetilde{\nabla}^2{d_t}(y)\left( y - \text{prox}_{h_t}^{\widetilde{\nabla}^2{d_t}(y)}(y-\widetilde{\nabla}^2{d_t}(y)^{-1}\widetilde{\nabla}{d_t}(y)\right).
\end{equation}
Using $\widetilde{\nabla}^2{d_t}(\cdot)$ defined by \eqref{eq:inexact_oracle}, we further define the following quantity:
\begin{equation}\label{eq:inexact_NT_decrement}
\begin{array}{ll}
\lambda_{t}(y) &:= \tnorms{\widetilde{G}_t(y)}_{y, t}^{\ast} = \iprods{\widetilde{\nabla}^2d_{t}(y)^{-1}\widetilde{G}_t(y),\widetilde{G}_t(y)}^{1/2}.
\end{array}
\end{equation}
We call $\lambda_{t}(y)$ the inexact proximal-Newton decrement.
In Subsection \ref{subsec:convergence_analysis} we can show that this quantity can be used to characterize the optimality condition \eqref{eq:opt_cond}.

\beforesubsec
\subsection{\bf The algorithm}\label{subsec:the_algorithm}
\aftersubsec
From the above analysis, we can combine all the steps together and describe an algorithm to solve \eqref{eq:constr_cvx} as in  Algorithm~\ref{alg:A1}.
In this algorithm, we explicitly show how to choose the accuracy of inexact oracles and inexact proximal-Newton direction, and how to update the penalty parameter $t$.

\begin{algorithm}[ht!]\caption{(\textit{Inexact Interior-Point Lagrangian Decomposition Algorithm})}\label{alg:A1}
\begin{algorithmic}[1]
\normalsize
\STATE\textbf{Phase 1: Find an initial point.}\label{step:A1_step0}~Given any value $t_0 \in (0, 1]$ and $\beta \in (0, \frac{1}{10}]$, find starting points $y^0\in\R^n$ and $x^0\in\R^p$  such that 
\begin{equation}\label{eq:initial_point_cond}
\tnorms{\widetilde{G}_{t_0}(y^0)}_{y^0, t_0}^{\ast} \leq \beta~~~~\text{and}~~~~\abss{\nabla\psi_{t_{0}}(x^0; y^0)}_{x^0, t_{0}}^{\ast} \leq \tfrac{\tilde{\delta}_0}{1+ \tilde{\delta}_0},
\end{equation}
by using Algorithm~\ref{alg:A2} below, for any predefined accuracy $\tilde{\delta}_0 \in (0, \frac{\beta}{100}]$.
\STATE\textbf{Phase 2: Main iteration.}~\textrm{For $k = 0$ to $k_{\max}$, perform}
\vspace{0.75ex}
\STATE\hspace{0.2cm}\label{step:A1_step1} Update $t_k$ as $t_{k+1} := \sigma t_k$, where $\sigma \in (0, 1)$ is defined by \eqref{eq:sigma_cond} below.
\vspace{0.75ex}
\STATE\hspace{0.2cm}\label{step:A1_step2} Solve approximately \eqref{eq:opt_cond_subprob} at $y = y^k$ up to an accuracy $\delta_k \in (0, \frac{\beta}{100}]$ to get $x^{k+1} := \tilde{x}_{t_{k+1}}^{\ast}(y^k)$, i.e.:
\vspace{-1ex}
\begin{equation*}
\abss{\nabla\psi_{t_{k+1}}(x^{k+1}; y^k)}_{x^{k+1}, t_{k+1}}^{\ast} \leq \tfrac{\delta_k}{1+\delta_k}.
\vspace{-2ex}
\end{equation*}
\STATE\hspace{0.2cm}\label{step:A1_step3}(\textbf{Inexact oracles}): Evaluate inexact gradient and Hessian of $d_t$ as
\vspace{-1ex}
\begin{equation}\label{eq:inexact_oracle_k}
\left\{\begin{array}{ll}
\widetilde{\nabla}{d_{t_{k+1}}}(y^k) &:= \frac{M_{t_{k+1}}^2}{4}Ax^{k+1} ,\vspace{1ex}\\
\widetilde{\nabla}^2{d_{t_{k+1}}}(y^k) &:= \frac{M_{t_{k+1}}^4}{16}A\nabla^2{\psi_{t_{k+1}}}(x^{k+1})^{-1}A^{\top}.
\end{array}\right.
\vspace{-1ex}
\end{equation}
\STATE\hspace{0.2cm}\label{step:A1_step4}(\textbf{Inexact proximal-Newton step}):
Compute $y^{k+1}$ up to an accuracy $\epsilon_k \in (0, \frac{\beta}{100}]$,  i.e.:
\vspace{-1ex}
\begin{equation*}
y^{k+1} :\approx \text{prox}_{h_{	t_{k+1}}}^{\widetilde{\nabla}^2d_{t_{k+1}}(y^k)}\left(y^k-\widetilde{\nabla}^2d_{t_{k+1}}(y^k)^{-1}\widetilde{\nabla}d_{t_{k+1}}(y^k) \right).
\vspace{-2ex}
\end{equation*}
\STATE\hspace{0cm}\textbf{End.}
\end{algorithmic}
\end{algorithm}

Note that we have not specified how to find a starting point $(x^0, y^0)$ to guarantee \eqref{eq:initial_point_cond}  and how to set $k_{\max}$ in Algorithm~\ref{alg:A1}. 
In Subsection \ref{subsec:initial_point}, we will show that such an $(x^0, y^0)$ can be found in finite steps. 
In Subsection \ref{subsec:convergence_analysis}, we show how to set $k_{\max}$ to get an $\varepsilon$-approximate primal-dual solution of \eqref{eq:constr_cvx}. 
\beforesubsec
\subsection{\bf Convergence analysis}\label{subsec:one_iter_analysis}
\aftersubsec
Our analysis consists of several steps and is organized as follows:
\begin{itemize}
\vspace{-1.5ex}
\item Lemma~\ref{le:key_property0} provides an estimate between $\lambda_{t_{k+1}}(y^{k+1})$ and $\lambda_{t_{k+1}}(y^{k})$ in \eqref{eq:inexact_NT_decrement}.
\item Lemma~\ref{le:key_property1} bounds $\lambda_{t_{k+1}}(y^k)$ in terms of  $\tilde{\Delta}_{t_k}$, $\tilde{\Delta}_{t_{k+1}}$ and $\lambda_{t_k}(y^k)$, where $\tilde{\Delta}_{t_k}$ and $\tilde{\Delta}_{t_{k+1}}$ measure the distances between $\tilde{x}_{t_{k}}^{\ast}(y^k)$ and $\tilde{x}_{t_{k+1}}^{\ast}(y^k)$.
\item Lemma~\ref{le:key_property2} shows how to upper bound $\tilde{\Delta}_{t_k}$ and $\tilde{\Delta}_{t_{k+1}}$.
\item The main result of this section is Theorem~\ref{thm:one_step_analysis} which provides an update rule of $t$ to maintain the point $y^k$ in the neighborhood of the central path.
The proof of this theorem is obtained by combining all the above  lemmas.
\vspace{-1.25ex}
\end{itemize}
Firstly, we state the main estimate of the inexact Newton-type step at Step~\ref{step:A1_step4} of Algorithm~\ref{alg:A1} in Lemma~\ref{le:key_property0}, whose proof is given in Appendix~\ref{apdx:le:key_property0}.


\begin{lemma}\label{le:key_property0}
Let $\set{y^{k}}$ be generated by Algorithm \ref{alg:A1}, and 
\begin{equation*}
\abss{\widetilde{x}^{\ast}_{t_{k+1}}(y^{k+1}) - x^{\ast}_{t_{k+1}}(y^{k+1})}_{\widetilde{x}^{\ast}_{t_{k+1}}(y^{k+1}), t_{k+1}} \leq \tilde{\delta}_{k+1}.
\end{equation*}
Then
\begin{equation}\label{eq:key_est00}
{\!\!\!}\begin{array}{ll}
\lambda_{t_{k+1}}(y^{k+1}) &\leq  \tilde{\delta}_{k+1} +  \frac{1}{\big(1{~} - {~}\tilde{\delta}_{k+1}\big)\big( 1 -  \delta_k - \lambda_{t_{k+1}}(y^k) - \epsilon_k\big)}\Bigg[3\epsilon_k {~} + {~} \delta_k \vspace{1ex}\\
& +  {~} \sqrt{4\delta_k {~} - {~} 2\delta_k^2}\big(\lambda_{t_{k+1}}(y^k) {~} + {~} \epsilon_k\big) +  \frac{\big(\lambda_{t_{k+1}}(y^k) + {~} \epsilon_k\big)^2}{\big(1 - \lambda_{t_{k+1}}(y^k) -  \delta_k  -  \epsilon_k\big)}\Bigg].
\end{array}{\!\!\!}
\end{equation}
In particular, if $\tilde{\delta}_{k+1} = 0$, $\delta_k = 0$, and $\epsilon_k = 0$, then \eqref{eq:key_est00} reduces to
\begin{equation}\label{eq:key_est01}
\lambda_{t_{k+1}}(y^{k+1}) \leq \frac{\lambda_{t_{k+1}}(y^k)^2}{(1 - \lambda_{t_{k+1}}(y^k))^2}.
\end{equation}
\end{lemma}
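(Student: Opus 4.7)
The plan is to decompose $\lambda_{t_{k+1}}(y^{k+1})$ according to the three sources of inexactness in Algorithm~\ref{alg:A1}: the primal-subproblem error $\delta_k$ at $y^k$, the proximal-Newton subsolve error $\epsilon_k$, and the primal-subproblem error $\tilde\delta_{k+1}$ at $y^{k+1}$. The natural first step is to exploit the exact optimality of $\bar y^{k+1} = \argmin Q_{t_{k+1}}$: there exists a subgradient $\bar\xi \in \partial h_{t_{k+1}}(\bar y^{k+1})$ satisfying $\bar\xi + \widetilde\nabla d_{t_{k+1}}(y^k) + \widetilde\nabla^2 d_{t_{k+1}}(y^k)(\bar y^{k+1}-y^k) = 0$, and by \eqref{eq:inexact_yk} combined with strong convexity of $Q_{t_{k+1}}$ in the $\widetilde\nabla^2 d_{t_{k+1}}(y^k)$-norm we have $\tnorm{y^{k+1}-\bar y^{k+1}}_{y^k,t_{k+1}} \le \epsilon_k$. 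Since $\tnorm{\bar y^{k+1}-y^k}_{y^k,t_{k+1}} = \lambda_{t_{k+1}}(y^k)$ by the definition of $\widetilde G$, the triangle inequality yields the crucial distance bound $\tnorm{y^{k+1}-y^k}_{y^k,t_{k+1}} \le \lambda_{t_{k+1}}(y^k)+\epsilon_k$, which drives the self-concordance transfer below.

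Starting from $\lambda_{t_{k+1}}(y^{k+1}) = \tnorm{\widetilde G_{t_{k+1}}(y^{k+1})}_{y^{k+1},t_{k+1}}^{\ast}$ and the prox-optimality characterization of $\widetilde G_{t_{k+1}}(y^{k+1})$ as a residual of the subgradient inclusion at $y^{k+1}$ with respect to $\widetilde\nabla d_{t_{k+1}}(y^{k+1})$ and $\widetilde\nabla^2 d_{t_{k+1}}(y^{k+1})$, I would first use Proposition~\ref{pro:inexact_oracle} at $y^{k+1}$ to pass from the inexact oracle to the exact one; this step produces the outer additive $\tilde\delta_{k+1}$ and the factor $(1-\tilde\delta_{k+1})$ in the denominator of \eqref{eq:key_est00}. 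The remaining residual, now in terms of $\nabla d_{t_{k+1}}(y^{k+1})$ and a subgradient of $h_{t_{k+1}}$ at $y^{k+1}$, is then rewritten using the exact optimality at $\bar y^{k+1}$ by inserting $\pm\{\widetilde\nabla d_{t_{k+1}}(y^k)+\widetilde\nabla^2 d_{t_{k+1}}(y^k)(y^{k+1}-y^k)\}$ and $\pm\{\nabla d_{t_{k+1}}(y^k)+\nabla^2 d_{t_{k+1}}(y^k)(y^{k+1}-y^k)\}$. This splits the error into four natural pieces: (a) a proximal-Newton subsolve term bounded via $\tnorm{y^{k+1}-\bar y^{k+1}}_{y^k,t_{k+1}}\le\epsilon_k$ combined with monotonicity of $\partial h_{t_{k+1}}$ (contributing to the $3\epsilon_k$); (b) the gradient oracle error at $y^k$, bounded by $\delta_k$ via Proposition~\ref{pro:inexact_oracle}; (c) the self-concordance nonlinearity $\nabla d_{t_{k+1}}(y^{k+1})-\nabla d_{t_{k+1}}(y^k)-\nabla^2 d_{t_{k+1}}(y^k)(y^{k+1}-y^k)$, controlled by the standard Nesterov--Nemirovskii estimate on the standard self-concordant $d_{t_{k+1}}$ from Proposition~\ref{pro:properties_of_dt}, which yields the quadratic piece $(\lambda_{t_{k+1}}(y^k)+\epsilon_k)^2/(1-\lambda_{t_{k+1}}(y^k)-\delta_k-\epsilon_k)$; and (d) the Hessian oracle error applied to the step, $(\widetilde\nabla^2 d_{t_{k+1}}(y^k)-\nabla^2 d_{t_{k+1}}(y^k))(y^{k+1}-y^k)$, whose dual norm is bounded via the spectral inequalities in Proposition~\ref{pro:inexact_oracle} and produces the $\sqrt{4\delta_k-2\delta_k^2}(\lambda_{t_{k+1}}(y^k)+\epsilon_k)$ contribution. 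A final norm transfer from $y^k$ back to $y^{k+1}$, via self-concordance of the exact $d_{t_{k+1}}$ together with the Hessian comparisons of Proposition~\ref{pro:inexact_oracle} at both base points, supplies the denominator factor $(1-\delta_k-\lambda_{t_{k+1}}(y^k)-\epsilon_k)$. Setting $\tilde\delta_{k+1}=\delta_k=\epsilon_k=0$ collapses every correction and recovers \eqref{eq:key_est01}.

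I expect the main obstacle to be the careful bookkeeping of local dual norms across two distinct base points and two Hessian versions while keeping the inexactness parameters entering roughly linearly, so that the bound degenerates correctly when the errors vanish. Naive chaining of Proposition~\ref{pro:inexact_oracle} would easily introduce extra $(1-\delta_k)^{-2}$ factors in the wrong places and destroy tightness; the identity $4\delta_k-2\delta_k^2 = 2\bigl(1-(1-\delta_k)^2\bigr)$ hints that the Hessian-mismatch piece (d) should be handled by a cosine-distance-type bound between the two Hessian ellipsoids rather than by a cruder worst-case operator-norm estimate, and getting this step right is the crux of the computation.
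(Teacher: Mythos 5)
Your plan matches the paper's proof in all essentials: you obtain $\tnorm{y^{k+1}-\bar y^{k+1}}_{y^k,t_{k+1}}\le\epsilon_k$ from the $Q$-gap condition, deduce $\tnorm{y^{k+1}-y^k}_{y^k,t_{k+1}}\le\lambda_{t_{k+1}}(y^k)+\epsilon_k$ by the triangle inequality, decompose the proximal-Newton residual into the gradient-oracle error at $y^k$, the Hessian-oracle mismatch, the subsolve error, and the self-concordance nonlinearity, and then transfer local norms between $y^k$ and $y^{k+1}$ and between exact and inexact Hessians via Proposition~\ref{pro:inexact_oracle} — exactly the structure of the paper's argument. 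In particular your identity $4\delta_k-2\delta_k^2 = 2\bigl(1-(1-\delta_k)^2\bigr)$ pinpoints how the paper bounds the Hessian-mismatch term $\bigl[\widetilde{\nabla}^2 d_{t_{k+1}}(y^k)-\nabla^2 d_{t_{k+1}}(y^k)\bigr](y^{k+1}-y^k)$ by expanding the squared dual norm and invoking the two-sided Hessian comparison rather than a crude operator-norm estimate, which is precisely the step that keeps the bound tight.
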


Note that if we solve both the slave problem at Step~\ref{step:A1_step2} and the master problem at Step~\ref{step:A1_step4} exactly, then we  could obtain the estimate \eqref{eq:key_est01}, which is the same as in standard interior-point path-following methods \cite{Nesterov2004}.
Next, we show a relation between $\lambda_{t_{k+1}}(y^k)$ and $\lambda_{t_{k}}(y^k)$, whose proof is in Appendix~\ref{apdx:le:key_property1}.

\begin{lemma}\label{le:key_property1}
Let $t_k$ be updated as $t_{k+1} := \sigma t_k$ for given $\sigma\in (0, 1)$.
Define
\begin{equation}\label{eq:Delta_quantity}
\left\{\begin{array}{ll}
\tilde{\Delta}_{t_k} & := \abss{\widetilde{x}^{\ast}_{t_{k+1}}(y^k) - \widetilde{x}^{\ast}_{t_k}(y^k)}_{\widetilde{x}^{\ast}_{t_k}(y^k), t_k},  \vspace{1ex}\\
\tilde{\Delta}_{t_{k+1}} &:= \abss{\widetilde{x}^{\ast}_{t_{k+1}}(y^k) - \widetilde{x}^{\ast}_{t_k}(y^k)}_{\widetilde{x}^{\ast}_{t_{k+1}}(y^k), t_{k+1}}.
\end{array}\right.
\end{equation}
Then, the following estimate holds 
\begin{equation}\label{eq:key_est02}
\lambda_{t_{k+1}}(y^k) \leq \tilde{\Delta}_{t_{k+1}} + \Bigg[\frac{1 + \sqrt{1 - 2\sigma(1-\tilde{\Delta}_{t_k})^2 + \sigma}}{\sigma(1-\tilde{\Delta}_{t_k})}\Bigg]\lambda_{t_k}(y^k).
\end{equation}
\end{lemma}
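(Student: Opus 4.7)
My plan is to compare the two inexact proximal-Newton subproblems indexed by $(t_k,\widetilde{x}_{t_k}^{\ast}(y^k))$ and $(t_{k+1},\widetilde{x}_{t_{k+1}}^{\ast}(y^k))$, both evaluated at the same dual iterate $y^k$. I will introduce an auxiliary ``hybrid'' gradient mapping $\widetilde{G}^{\mathrm{hyb}}(y^k)$, obtained by using the inexact oracle \eqref{eq:inexact_oracle_k} at the new parameter $t_{k+1}$ but keeping the old slave point $\widetilde{x}_{t_k}^{\ast}(y^k)$, and split the bound via the triangle inequality in the dual norm $\tnorms{\cdot}_{y^k,t_{k+1}}^{\ast}$:
\begin{equation*}
\lambda_{t_{k+1}}(y^k)\leq \tnorms{\widetilde{G}_{t_{k+1}}(y^k)-\widetilde{G}^{\mathrm{hyb}}(y^k)}_{y^k,t_{k+1}}^{\ast} + \tnorms{\widetilde{G}^{\mathrm{hyb}}(y^k)}_{y^k,t_{k+1}}^{\ast}.
\end{equation*}
The first summand involves only the primal displacement $\widetilde{x}_{t_{k+1}}^{\ast}(y^k)-\widetilde{x}_{t_k}^{\ast}(y^k)$ transported through $A$; by the same computation behind the third line of \eqref{eq:inexact_oracle_properties}, and using the full-row rankness of $A$, it is bounded by the primal local norm of that displacement at $(\widetilde{x}_{t_{k+1}}^{\ast}(y^k),t_{k+1})$, which is precisely $\tilde{\Delta}_{t_{k+1}}$.

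For the second summand I will rescale the hybrid proximal-Newton subproblem by $\sigma$ to absorb the factor $M_{t_{k+1}}^2/M_{t_k}^2=1/\sigma$ (valid whenever $t_k\leq 1$, which is enforced by Algorithm~\ref{alg:A1}). Under this rescaling, the hybrid subproblem becomes a linearly perturbed version of the $t_k$-proximal-Newton subproblem: its gradient and nonsmooth part coincide with those at $t_k$, while the Hessian picks up a factor $\sigma$ and replaces $\nabla^2\psi_{t_k}(\widetilde{x}_{t_k}^{\ast}(y^k))$ by $\nabla^2\psi_{t_{k+1}}(\widetilde{x}_{t_k}^{\ast}(y^k))=\nabla^2\psi_{t_k}(\widetilde{x}_{t_k}^{\ast}(y^k))+\tfrac{1-\sigma}{\sigma t_k}\nabla^2 g(\widetilde{x}_{t_k}^{\ast}(y^k))$. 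Comparing the two optimality systems, applying Cauchy--Schwarz in the mixed $t_k$- and $t_{k+1}$-local norms and rearranging, should yield a quadratic inequality in the unknown $\lambda^{\mathrm{hyb}}:=\tnorms{\widetilde{G}^{\mathrm{hyb}}(y^k)}_{y^k,t_{k+1}}^{\ast}$, whose positive root furnishes the $[1+\sqrt{1-2\sigma(1-\tilde{\Delta}_{t_k})^2+\sigma}]/\sigma$-type coefficient.

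The remaining factor $(1-\tilde{\Delta}_{t_k})^{-1}$ in the denominator appears because $\tnorms{\cdot}_{y^k,t_{k+1}}^{\ast}$ is built from $\nabla^2\psi_{t_{k+1}}(\widetilde{x}_{t_{k+1}}^{\ast}(y^k))$, whereas the hybrid analysis naturally produces norms based on $\nabla^2\psi_{t_{k+1}}(\widetilde{x}_{t_k}^{\ast}(y^k))$. Transferring between the two relies on the Dikin-ellipsoid inequality of the standard self-concordant function $\psi_{t_k}$ (Proposition~\ref{pro:properties_of_dt}), which yields $(1-\tilde{\Delta}_{t_k})^2\nabla^2\psi_{t_k}(\widetilde{x}_{t_k}^{\ast}(y^k))\preceq\nabla^2\psi_{t_k}(\widetilde{x}_{t_{k+1}}^{\ast}(y^k))$, and is then conjugated by $A(\cdot)A^{\top}$ and inverted. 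The main obstacle is coordinating the three interacting perturbations---the $\sigma$-rescaling of gradient, Hessian, and nonsmooth term; the primal-linearization shift measured in two different local norms; and the Hessian change at a fixed point when $t$ is varied---so that they combine into exactly the discriminant $1-2\sigma(1-\tilde{\Delta}_{t_k})^2+\sigma$ rather than a messier bound with leftover slack. Verifying that this discriminant is nonnegative under the standing conditions $\tilde{\Delta}_{t_k}<1$ and $\sigma\in(0,1)$ is straightforward but necessary for the square root to be real.
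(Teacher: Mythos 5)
The proposal takes a genuinely different intermediate point from the paper's proof, and in its current form there is a real gap. The paper introduces $\bar{u}:=\prox_{h_{t_k}}^{\widetilde{\nabla}^2 d_{t_k}(y^k)}\big(y^k-\widetilde{\nabla}^2 d_{t_k}(y^k)^{-1}\widetilde{\nabla} d_{t_k}(y^k)\big)$, i.e., the inexact proximal-Newton step at the \emph{old} parameter $t_k$ with slave point $\widetilde{x}^\ast_{t_k}(y^k)$, applies the monotonicity of $\partial\bar{h}$ \emph{once} to the pair $(\bar{u},\bar{y}^{k+1})$ after dividing out $M_t^2/4$, and then estimates the resulting Hessian-difference term $\Tc_1$ and gradient-difference term $\Tc_2$ separately. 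You instead split at a ``hybrid'' step built from $(t_{k+1},\widetilde{x}^\ast_{t_k}(y^k))$, which in principle separates the slave-point change from the $t$-change; this is an attractive modular idea, but it does not currently close.

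The central difficulty is your bound for the first summand. The inexact gradient mapping is $\widetilde{G}_{t}(y)=\widetilde{\nabla}^2 d_{t}(y)\big(y-\prox_{h_{t}}^{\widetilde{\nabla}^2 d_{t}(y)}(y-\widetilde{\nabla}^2 d_{t}(y)^{-1}\widetilde{\nabla} d_{t}(y))\big)$, so replacing $\widetilde{x}^\ast_{t_{k+1}}(y^k)$ by $\widetilde{x}^\ast_{t_k}(y^k)$ changes \emph{both} the gradient $\widetilde{\nabla} d_{t_{k+1}}(y^k)=\tfrac{M_{t_{k+1}}^2}{4}A\widetilde{x}^\ast$ \emph{and} the Hessian $\widetilde{\nabla}^2 d_{t_{k+1}}(y^k)=\tfrac{M_{t_{k+1}}^4}{16}A\nabla^2\psi_{t_{k+1}}(\widetilde{x}^\ast)^{-1}A^{\top}$, and the Hessian enters the scaled proximal operator. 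Using the monotonicity of $\partial h_{t_{k+1}}$ between the two optimality inclusions, the bound one obtains on $\tnorms{\bar{y}^{k+1}-\bar{y}_{\mathrm{hyb}}}_{y^k,t_{k+1}}$ is of the form $\tnorms{g-g'}_{y^k,t_{k+1}}^{\ast}+\tnorms{(H-H')(\bar{y}_{\mathrm{hyb}}-y^k)}_{y^k,t_{k+1}}^{\ast}$. Only the first piece reduces to $\tilde{\Delta}_{t_{k+1}}$ via the $A^{\top}(AQ^{-1}A^{\top})^{-1}A\preceq Q$ argument behind the third line of \eqref{eq:inexact_oracle_properties}; the second piece is proportional to $\tilde{\Delta}_{t_{k+1}}\cdot\tnorms{\bar{y}_{\mathrm{hyb}}-y^k}$, so your first summand is \emph{not} bounded by $\tilde{\Delta}_{t_{k+1}}$ alone. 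The paper avoids this by keeping the Hessian change attached to the \emph{old} displacement $\bar{u}-y^k$ (so it multiplies $\lambda_{t_k}(y^k)$) rather than to the hybrid displacement.

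There is also an issue with how you propose to obtain the discriminant. In the paper, $\sqrt{1-2\sigma(1-\tilde{\Delta}_{t_k})^2+\sigma}$ does not come from solving a quadratic inequality in a Newton decrement; it comes from directly estimating the quadratic form $(\bar{u}-y^k)^{\top}\big[t^2\widetilde{\nabla}^2 d_t\widetilde{\nabla}^2 d_{t_+}^{-1}\widetilde{\nabla}^2 d_t-2tt_+\widetilde{\nabla}^2 d_t+t_+^2\widetilde{\nabla}^2 d_{t_+}\big](\bar{u}-y^k)$ using the two-sided PSD comparison \eqref{eq:lm2_est_d} between $\widetilde{\nabla}^2 d_t$ and $\widetilde{\nabla}^2 d_{t_+}$, and then taking a square root. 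A direct computation along your lines (fixed slave point, monotonicity with the gradient terms cancelling after dividing by $M_t^2/4$, then a norm transfer $\widetilde{\nabla}^2_{\mathrm{hyb}}d_{t_{k+1}}\succeq\sigma(1-\tilde{\Delta}_{t_k})^2\widetilde{\nabla}^2 d_{t_{k+1}}$) gives a linear bound $\lambda^{\mathrm{hyb}}\lesssim\tfrac{1+\sqrt{1-\sigma}}{\sigma}\lambda_{t_k}(y^k)$ in the hybrid norm, but the extra factor $\sigma^{-1/2}$ from the norm transfer then overshoots the target coefficient $\tfrac{1+\sqrt{1-2\sigma(1-\tilde{\Delta}_{t_k})^2+\sigma}}{\sigma(1-\tilde{\Delta}_{t_k})}$. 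So even after fixing the first-term bound, the extra triangle inequality and the norm transfers inherent in your splitting appear to introduce slack, and it is unclear the exact constant of \eqref{eq:key_est02} can be recovered this way.
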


The following lemma shows how to bound $\tilde{\Delta}_{t_k}$ and $\tilde{\Delta}_{t_{k+1}}$, the distances between $\widetilde{x}^{\ast}_{t_k}(y^k)$ and $\widetilde{x}^{\ast}_{t_{k+1}}(y^k)$, whose proof is given in Appendix~\ref{apdx:le:key_property2}.

\begin{lemma}\label{le:key_property2}
Let $\tilde{\Delta}_{t_k}$ and $\tilde{\Delta}_{t_{k+1}}$ be defined by \eqref{eq:Delta_quantity}, and $t_{k+1} := \sigma t_k$ for some $\sigma \in (0, 1)$.
We define the following quantities:
\begin{equation}\label{eq:lm3_def}
\left\{\begin{array}{ll}
\hat{\delta}_{t_k}  & := \abss{\nabla{\psi_{t_k}}(\widetilde{x}^{\ast}_{t_k}(y^k); y^k)}_{\tilde{x}^{\ast}_{t_k}(y^k), t_k}^{\ast}  \vspace{1ex}\\
\hat{\delta}_{t_{k+1}} &:=  \abss{\nabla{\psi_{t_{k+1}}}(\widetilde{x}^{\ast}_{t_{k+1}}(y^k); y^k)}_{\widetilde{x}^{\ast}_{t_{k+1}}(y^k), t_{k+1}}^{\ast}.
 \end{array}\right.
\end{equation}
Then, we have
\begin{equation}\label{eq:key_est03}
\left\{\begin{array}{ll}
\frac{\tilde{\Delta}_{t_k}^2}{1 {~} + {~} \tilde{\Delta}_{t_k}} & \leq \tilde{\Delta}_{t_k}\hat{\delta}_{t_k} + \left(\sigma\hat{\delta}_{t_{k+1}} {~} + {~} (1 - \sigma)\sqrt{\nu_f}\right)\tilde{\Delta}_{t_{k+1}}\vspace{1ex}\\
\frac{\tilde{\Delta}_{t_{k+1}}^2}{1 {~} + {~} \tilde{\Delta}_{t_{k+1}}} &\leq \tilde{\Delta}_{t_{k+1}}\hat{\delta}_{t_{k+1}} {~} + {~} \left(\frac{\hat{\delta}_{t_k}}{\sigma} {~} + {~} \big(\frac{1-\sigma}{\sigma} \big)\sqrt{\nu_f}\right)\tilde{\Delta}_{t_k},
\end{array}\right.
\end{equation}
where $\nu_f$ is the barrier parameter of $f$. In particular, for fixed $\delta \in [0, 1)$, if we choose $\hat{\delta}_{t_k} \leq \delta$ and $\hat{\delta}_{t_{k+1}} \leq \delta$, then
\begin{equation}\label{eq:key_est04}
\left\{ \begin{array}{ll}
\frac{\tilde{\Delta}_{t_k}^2}{1 {~} + {~} \tilde{\Delta}_{t_k}}  &\leq  \delta\cdot \tilde{\Delta}_k {~} + {~} c_{\nu}(\sigma) \cdot \tilde{\Delta}_{t_{k+1}} \vspace{1ex}\\
\frac{\tilde{\Delta}_{t_{k+1}}^2}{1 {~} + {~}  \tilde{\Delta}_{t_{k+1}}} &\leq \delta\cdot \tilde{\Delta}_{t_{k+1}} {~} + {~} c_{\nu}(\sigma) \cdot \tilde{\Delta}_{t_k},
\end{array}\right.
\end{equation}
where $c_{\nu}(\sigma) := \frac{\delta}{\sigma} + \big(\frac{1-\sigma}{\sigma}\big)\sqrt{\nu_f}$ is a decreasing function of $\sigma$ on $(0,1]$.
As a consequence, we also have
\begin{equation}\label{eq:key_est05}
\tilde{\Delta}_{t_k} \leq \frac{\delta {~}+ {~} c_{\nu}(\sigma)}{1 {~} - {~} \delta {~} - {~} c_{\nu}(\sigma)} ~~~~\text{and}~~~~ \tilde{\Delta}_{t_{k+1}} \leq \frac{\delta {~} + {~} c_{\nu}(\sigma)}{1 {~} - {~} \delta {~} - {~} c_{\nu}(\sigma)}.
\end{equation}
\end{lemma}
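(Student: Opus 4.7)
Write $x_k := \widetilde{x}^{\ast}_{t_k}(y^k)$ and $x_{k+1} := \widetilde{x}^{\ast}_{t_{k+1}}(y^k)$ for brevity, so that $\tilde{\Delta}_{t_k} = \abss{x_{k+1}-x_k}_{x_k,t_k}$ and $\tilde{\Delta}_{t_{k+1}} = \abss{x_{k+1}-x_k}_{x_{k+1},t_{k+1}}$. The two inequalities in \eqref{eq:key_est03} are perfectly symmetric under swapping the roles of $(x_k,t_k)$ and $(x_{k+1},t_{k+1})$, so it suffices to describe the plan for the first one and indicate the parallel derivation for the second. My plan rests on three ingredients: (i) the standard self-concordant lower bound $\iprods{\nabla\phi(u)-\nabla\phi(v), u-v}\geq \frac{\norms{u-v}_v^2}{1+\norms{u-v}_v}$ applied to $\phi := \psi_{t_k}(\cdot;y^k)$, which is standard self-concordant by Proposition~\ref{pro:properties_of_dt}; (ii) the identity $\nabla\psi_{t_k}(x;y^k) = \sigma\nabla\psi_{t_{k+1}}(x;y^k) + (1-\sigma)\nabla f(x)$, which follows by direct computation from $\psi_t = \frac{1}{t}(g - \iprods{A^{\top}y,\cdot}) + f$ and $t_{k+1}=\sigma t_k$; (iii) the defining inequality $\vert\iprods{\nabla f(x), u}\vert \leq \sqrt{\nu_f}\norms{u}_{x,f}$ of a $\nu_f$-self-concordant barrier from Definition~\ref{de:scb_function}, combined with the domination $\nabla^2 f(x)\preceq \nabla^2\psi_t(x;y)$ (hence $\norms{u}_{x,f}\leq \abss{u}_{x,t}$).

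\textbf{Step 1: lower bound on the inner product.} Apply (i) to $\phi := \psi_{t_k}(\cdot;y^k)$ at the pair $(x_k, x_{k+1})$ to obtain
\[
\iprods{\nabla\psi_{t_k}(x_{k+1};y^k) - \nabla\psi_{t_k}(x_k;y^k),\, x_{k+1}-x_k} \;\geq\; \frac{\tilde{\Delta}_{t_k}^2}{1+\tilde{\Delta}_{t_k}}.
\]

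\textbf{Step 2: decompose the left-hand side using (ii).} Using the identity in (ii) to rewrite $\nabla\psi_{t_k}(x_{k+1};y^k)$ in terms of $\nabla\psi_{t_{k+1}}(x_{k+1};y^k)$ and $\nabla f(x_{k+1})$, the inner product splits into three pieces:
\[
\sigma\iprods{\nabla\psi_{t_{k+1}}(x_{k+1};y^k), x_{k+1}-x_k} + (1-\sigma)\iprods{\nabla f(x_{k+1}), x_{k+1}-x_k} - \iprods{\nabla\psi_{t_k}(x_k;y^k), x_{k+1}-x_k}.
\]
Bound the first and third pieces by Cauchy--Schwarz in the local norms $\abss{\cdot}_{x_{k+1},t_{k+1}}$ and $\abss{\cdot}_{x_k,t_k}$ respectively, producing $\sigma\hat{\delta}_{t_{k+1}}\tilde{\Delta}_{t_{k+1}}$ and $\hat{\delta}_{t_k}\tilde{\Delta}_{t_k}$. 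Bound the middle piece using (iii) at $x=x_{k+1}$, namely $\vert\iprods{\nabla f(x_{k+1}),x_{k+1}-x_k}\vert \leq \sqrt{\nu_f}\norms{x_{k+1}-x_k}_{x_{k+1},f} \leq \sqrt{\nu_f}\,\tilde{\Delta}_{t_{k+1}}$. Combining these bounds with Step~1 yields the first inequality of \eqref{eq:key_est03}. The second inequality is obtained by the mirror-image argument: apply (i) to $\psi_{t_{k+1}}(\cdot;y^k)$ at the pair $(x_{k+1},x_k)$, and use the dual identity $\nabla\psi_{t_{k+1}}(x;y^k) = \tfrac{1}{\sigma}\nabla\psi_{t_k}(x;y^k) - \tfrac{1-\sigma}{\sigma}\nabla f(x)$, evaluated at $x=x_k$.

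\textbf{Step 3: deriving \eqref{eq:key_est04} and \eqref{eq:key_est05}.} Plugging $\hat{\delta}_{t_k},\hat{\delta}_{t_{k+1}}\leq\delta$ into \eqref{eq:key_est03} and using $\sigma\delta+(1-\sigma)\sqrt{\nu_f}\leq \frac{\delta+(1-\sigma)\sqrt{\nu_f}}{\sigma} = c_\nu(\sigma)$ gives \eqref{eq:key_est04} directly. For \eqref{eq:key_est05}, let $M := \max\{\tilde{\Delta}_{t_k},\tilde{\Delta}_{t_{k+1}}\}$; the larger of the two inequalities of \eqref{eq:key_est04} implies $\frac{M^2}{1+M}\leq (\delta+c_\nu(\sigma))M$, so that $M\leq \frac{\delta+c_\nu(\sigma)}{1-\delta-c_\nu(\sigma)}$, provided $\delta+c_\nu(\sigma)<1$ (which holds under the parameter choices of Algorithm~\ref{alg:A1}). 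The monotonicity of $c_\nu(\sigma)$ in $\sigma$ is checked by inspection of its defining formula.

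\textbf{Main obstacle.} The only delicate point is the algebraic identity in (ii) linking $\nabla\psi_{t_k}$ to $\nabla\psi_{t_{k+1}}$ and $\nabla f$: getting the coefficients $\sigma$ and $1-\sigma$ right (as opposed to, say, $t_{k+1}/t_k$ and $1-t_{k+1}/t_k$) is what makes the factor $(1-\sigma)$ appear in front of the $\sqrt{\nu_f}$ term, which is essential for the path-following scheme to yield a polynomial bound when $\sigma$ is close to $1$. Apart from this bookkeeping, everything else is a routine combination of Cauchy--Schwarz, the self-concordance inequality, and the barrier property.
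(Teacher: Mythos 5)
Your proof is correct and the derivation of \eqref{eq:key_est03} follows the same route as the paper: the self-concordant lower bound $\iprods{\nabla\psi_{t}(u)-\nabla\psi_{t}(v),u-v}\geq \frac{\abss{u-v}_{v,t}^2}{1+\abss{u-v}_{v,t}}$, the gradient identity $\nabla\psi_{t_k}(\cdot;y^k)=\sigma\nabla\psi_{t_{k+1}}(\cdot;y^k)+(1-\sigma)\nabla f$, Cauchy--Schwarz in the appropriate primal local norms, and the barrier bound $\abss{\nabla f(x)}_{x,t}^{\ast}\leq\norms{\nabla f(x)}_x^{\ast}\leq\sqrt{\nu_f}$ (via $\nabla^2\psi_t\succeq\nabla^2 f$). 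The only cosmetic difference is the order of operations — you substitute the gradient identity before applying Cauchy--Schwarz, whereas the paper applies Cauchy--Schwarz first and then decomposes the resulting dual norm $\abss{\nabla\psi_{t_k}(\widetilde{x}^{\ast}_{t_{k+1}}(y^k))}^{\ast}_{\widetilde{x}^{\ast}_{t_{k+1}}(y^k),t_{k+1}}$; both produce the same three terms.

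Where you genuinely diverge is the passage from \eqref{eq:key_est04} to \eqref{eq:key_est05}. The paper introduces an auxiliary result (Lemma~\ref{le:key_property3}), which bounds the solution set of the coupled system $\frac{u^2}{1+u}\leq au+bv$, $\frac{v^2}{1+v}\leq av+bu$ by a contradiction argument: assuming $u$ exceeds the claimed bound forces $v>u$ from the first inequality and $u>v$ from the second. Your shortcut — set $M:=\max\{\tilde{\Delta}_{t_k},\tilde{\Delta}_{t_{k+1}}\}$, note that the corresponding inequality in \eqref{eq:key_est04} gives $\frac{M^2}{1+M}\leq(\delta+c_\nu(\sigma))M$, and solve — is shorter, avoids the contradiction, and removes the need for a separate lemma. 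Both arguments rely on the same hypothesis $\delta+c_\nu(\sigma)<1$, which Theorem~\ref{thm:one_step_analysis} guarantees via the choices $\delta\leq\tfrac{\beta}{100}$, $c_\nu(\sigma)=0.3\beta$, $\beta\leq\tfrac{1}{10}$. The simplification is worth noting, but does not change the conclusion or any constant.
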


Utilizing the results of Lemma~\ref{le:key_property0}, Lemma~\ref{le:key_property1} and Lemma~\ref{le:key_property2}, we can prove the following main result on the iteration-complexity of Algorithm~\ref{alg:A1}.


\begin{theorem}\label{thm:one_step_analysis}
Let us choose $\beta \in (0, \frac{1}{10}]$.
Suppose that we choose $\tilde{\delta}_{0}, \delta_k, \tilde{\delta}_{k+1}, \epsilon_k \in [0,  \frac{\beta}{100}]$ and update $t_k$ in  Algorithm \ref{alg:A1} as $t_{k+1} := \sigma t_k$ with
\begin{equation}\label{eq:sigma_cond}
\sigma := 1 - \frac{0.29\beta}{0.3\beta + \sqrt{\nu_f}}  \in (0, 1).
 \end{equation}
In addition, if $y^0\in\R^n$ and $x^0\in\R^p$ satisfy \eqref{eq:initial_point_cond}, then for all $k\geq 0$, we have
\begin{equation*}
\lambda_{t_k}(y^{k}) \leq \beta.
\end{equation*}
Consequently, the number of iterations to obtain $t_k \leq \hat{\varepsilon}$ for a given $\hat{\varepsilon} > 0$ and $\lambda_{t_k}(y^k) \leq \beta$ does not exceed:
\begin{equation}\label{eq:iter_complexity_bound}
k_{\max} := \left\lfloor \frac{\ln\left(\frac{t_0}{\hat{\varepsilon}}\right)}{-\ln(\sigma)} \right\rfloor = \BigO{\sqrt{\nu_f}\ln\left(\frac{t_0}{\hat{\varepsilon}}\right)},
\end{equation}
where $\nu_f$ is the barrier parameter of $f$ and $t_0 \in (0, 1]$.
\end{theorem}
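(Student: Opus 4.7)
The plan is to prove by induction on $k$ that $\lambda_{t_k}(y^k) \leq \beta$, with the base case $k=0$ supplied by \eqref{eq:initial_point_cond}. The inductive step will chain the three technical lemmas of Subsection~\ref{subsec:one_iter_analysis} in order: Lemma~\ref{le:key_property2} (central-path proximity), Lemma~\ref{le:key_property1} (transport across the parameter update), and Lemma~\ref{le:key_property0} (inexact proximal-Newton contraction). The complexity estimate will then be a one-line consequence of the geometric decay $t_k = \sigma^k t_0$.

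Suppose $\lambda_{t_k}(y^k) \leq \beta$. Step~\ref{step:A1_step2} of Algorithm~\ref{alg:A1} together with \eqref{eq:approx_cond} enforces the primal residuals $\hat{\delta}_{t_k}, \hat{\delta}_{t_{k+1}} \leq \beta/100$. Inserting $\delta := \beta/100$ and the explicit $\sigma$ from \eqref{eq:sigma_cond} into \eqref{eq:key_est05} yields $c_\nu(\sigma) = \beta/(100\sigma) + (1-\sigma)\sqrt{\nu_f}/\sigma$; since $1-\sigma = 0.29\beta/(0.3\beta+\sqrt{\nu_f})$, the second summand is bounded by $0.29\beta$, so $c_\nu(\sigma)$ is of order $\beta$ and both $\tilde{\Delta}_{t_k}$ and $\tilde{\Delta}_{t_{k+1}}$ stay comfortably below $1/3$.

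Feeding these $\tilde{\Delta}$-bounds into \eqref{eq:key_est02}, an expansion around $\tilde{\Delta}_{t_k}=0$ shows the coefficient $[1+\sqrt{1-2\sigma(1-\tilde{\Delta}_{t_k})^2+\sigma}]/[\sigma(1-\tilde{\Delta}_{t_k})]$ is bounded by a constant of order $1+\sqrt{(1-\sigma)+4\tilde{\Delta}_{t_k}} = 1 + \mathcal{O}(\sqrt{\beta})$. Hence $\lambda_{t_{k+1}}(y^k) \leq \eta\beta$ for an explicit $\eta$ safely below the blow-up threshold $1-\delta_k-\epsilon_k$ of \eqref{eq:key_est00}. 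Plugging this bound together with $\delta_k, \tilde{\delta}_{k+1}, \epsilon_k \leq \beta/100$ into \eqref{eq:key_est00}, the four contributions on the right-hand side, namely the slave error $\tilde{\delta}_{k+1}$, the constant piece $3\epsilon_k+\delta_k$, the cross term of order $\sqrt{\delta_k}(\lambda_{t_{k+1}}(y^k)+\epsilon_k)$, and the quadratic Newton contraction $(\lambda_{t_{k+1}}(y^k)+\epsilon_k)^2/(1-\lambda_{t_{k+1}}(y^k)-\delta_k-\epsilon_k)$, are each of order at most $\beta^{3/2}$ or $\beta^2$; combined with $\beta \leq 1/10$ this forces $\lambda_{t_{k+1}}(y^{k+1}) \leq \beta$ and closes the induction. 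The complexity bound then follows from $t_k = \sigma^k t_0 \leq \hat{\varepsilon}$, which needs $k \geq \ln(t_0/\hat{\varepsilon})/(-\ln\sigma)$; using $-\ln(1-s)\geq s$ with $s = 0.29\beta/(0.3\beta+\sqrt{\nu_f}) = \Omega(1/\sqrt{\nu_f})$ gives exactly \eqref{eq:iter_complexity_bound}.

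The hard part will be the numerical bookkeeping inside the inductive step: the path-following expansion produced by Lemma~\ref{le:key_property1} enlarges $\lambda_{t_k}(y^k)$ by a factor strictly larger than one, and this expansion must be paid back by the quadratic contraction of Lemma~\ref{le:key_property0} so that the composed round-trip bound is at most $\beta$ rather than a strictly larger multiple of $\beta$. The constants $\beta \leq 1/10$, $\sigma$ from \eqref{eq:sigma_cond}, and tolerances $\leq \beta/100$ are precisely tuned for this cancellation; verifying it rigorously requires tracking linear-in-$\beta$, half-power-in-$\beta$, and quadratic-in-$\beta$ contributions simultaneously, after which the remainder of the proof is mechanical substitution.
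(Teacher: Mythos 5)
Your plan follows the paper's proof of Theorem~\ref{thm:one_step_analysis} very closely: both arguments are inductions on $k$ (the paper's is implicit) that (i) feed the accuracy bound $\delta_k \le \beta/100$ into Lemma~\ref{le:key_property2} to bound $\tilde\Delta_{t_k}, \tilde\Delta_{t_{k+1}}$, (ii) pass those bounds through \eqref{eq:key_est02} together with the hypothesis $\lambda_{t_k}(y^k)\le\beta$ to obtain $\lambda_{t_{k+1}}(y^k) \le 2.1\beta$, and (iii) plug that into \eqref{eq:key_est00} of Lemma~\ref{le:key_property0} to close the loop at $\lambda_{t_{k+1}}(y^{k+1})\le\beta$; the iteration count is then the same geometric-decay argument. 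The only real difference is presentational: the paper verifies step (iii) first under the assumption $\lambda_{t_{k+1}}(y^k)\le 2.1\beta$ and then justifies that assumption, while you chain the lemmas in forward order.

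One slip worth flagging. You assert that the four contributions on the right of \eqref{eq:key_est00} ``are each of order at most $\beta^{3/2}$ or $\beta^2$.'' That is false for the first two: the slave error $\tilde\delta_{k+1}$ and the piece $3\epsilon_k + \delta_k$ are $\Theta(\beta)$ when the tolerances sit at the ceiling $\beta/100$, contributing roughly $0.01\beta$ and $0.04\beta$ (plus denominator corrections). If those terms really were $O(\beta^{3/2})$ the induction would close for all sufficiently small $\beta$ with no need for the carefully tuned constant $100$, nor for the exact choice of $\sigma$ in \eqref{eq:sigma_cond} that makes $c_\nu(\sigma)=0.3\beta$ on the nose, nor indeed for the tightness of \eqref{eq:key_est00_new}, whose right-hand side comes in around $0.95\beta$ at $\beta = 1/10$. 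You partially walk this back in the final paragraph by acknowledging the linear-in-$\beta$ contributions, so the overall plan still stands, but the earlier overstatement would derail the numerical bookkeeping you correctly identify as the crux. The paper's actual computation keeps all four pieces at face value and just checks the numeric inequality on $\beta\in(0,1/10]$.
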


\begin{proof}
Let us first assume that $ \lambda_{t_{k+1}}(y^k) \leq 2.1\beta$. Using $\delta_k, \tilde{\delta}_{k+1}, \epsilon_k \in [0, 10^{-2}\beta]$, after a few elementary calculations, we can overestimate \eqref{eq:key_est00} in Lemma~\ref{le:key_property0} as
\begin{equation}\label{eq:key_est00_new}
\begin{array}{ll}
\lambda_{t_{k+1}}(y^{k+1}) &\leq  \frac{\beta}{100} +  \frac{0.04\beta +0.1\sqrt{4\beta-0.02\beta^2}\left(2.11\beta\right)}{\left(1  - 10^{-2}\beta\right)\left( 1 -  2.12\beta\right)}\vspace{1ex}\\
 & + {~}  \frac{\left(2.11\beta\right)^2}{(1 - 10^{-2}\beta\big)\left( 1 -  2.12\beta\right)^2} \vspace{1ex}\\
 & \leq \beta,
\end{array}
\end{equation}
when $\beta \in (0,\frac{1}{10}]$. Now, we prove that $\lambda_{t_{k+1}}(y^k) \leq 2.1\beta$ is always satisfied. 
Indeed, since
\begin{equation*}
\begin{array}{lll}
\abss{\nabla{\psi_{t_k}}(\widetilde{x}^{\ast}_{t_k}(y^k); y^k)}_{\tilde{x}^{\ast}_{t_k}(y^k), t_k}^{\ast} &\leq \frac{\tilde{\delta}_k}{1+\tilde{\delta}_k} &\leq \frac{\beta}{100} \vspace{1ex}\\
\abss{\nabla{\psi_{t_{k+1}}}(\widetilde{x}^{\ast}_{t_{k+1}}(y^k); y^k)}_{\widetilde{x}^{\ast}_{t_{k+1}}(y^k), t_{k+1}}^{\ast}&\leq \frac{\delta_k}{1+\delta_k} &\leq \frac{\beta}{100},
\end{array}
\end{equation*}
we can choose $\delta$ in Lemma \ref{le:key_property2} to be $\frac{\beta}{100}$. 
In addition, from $\sigma := 1 - \frac{0.29\beta}{0.3\beta + \sqrt{\nu_f}}$, we can show that
\begin{equation*}
c_{\nu}(\sigma) := \frac{\delta}{\sigma} + \frac{1-\sigma}{\sigma}\sqrt{\nu_f} = \frac{10^{-2}\beta}{\sigma} + \frac{1-\sigma}{\sigma}\sqrt{\nu_f} = 0.3\beta.
\end{equation*}
Next, using \eqref{eq:key_est05}, we get
\begin{equation*}
\begin{array}{ll}
\tilde{\Delta}_{t_k} &\leq \frac{0.01\beta + 0.3\beta}{1 - 10^{-2}\beta - 0.3\beta} \leq 0.4493\beta ~~\text{and}~~\tilde{\Delta}_{t_{k+1}} \leq \frac{10^{-2}\beta + 0.3\beta}{1 - 10^{-2}\beta - 0.3\beta} \leq 0.4493\beta.
\end{array}
\end{equation*}
Finally, combining these estimates and \eqref{eq:key_est02} we can show that
\begin{equation*}
\lambda_{t_{k+1}}(y^k) \leq 0.4493\beta + \frac{1 + \sqrt{1 - 2\sigma(1-0.4493\beta)^2 + \sigma}}{\sigma(1-0.4493\beta)}\beta \overset{\tiny\eqref{eq:sigma_cond}}{\leq} 2.1\beta,
\end{equation*}
when $\beta \in (0, \frac{1}{10}]$. Since $t_k := \sigma^kt_0 = \left(1 - \frac{0.29\beta}{0.3\beta + \sqrt{\nu_f}}\right)^kt_0$, to guarantee $t_k \leq \hat{\varepsilon}$, we impose $\sigma^kt_0 = \left(1 - \frac{0.29\beta}{0.3\beta + \sqrt{\nu_f}}\right)^kt_0 \leq \hat{\varepsilon}$.
Note that $-\ln\left(1 - \frac{0.29\beta}{0.3\beta + \sqrt{\nu_f}}\right) \sim \frac{0.29\beta}{0.3\beta + \sqrt{\nu_f}} \sim~\frac{1}{\sqrt{\nu_f}}$, we have
\begin{equation*}
k \geq \left\lfloor \frac{\ln\left(\frac{t_0}{\hat{\varepsilon}}\right)}{-\ln(\sigma)} \right\rfloor = \BigO{\sqrt{\nu_f}\ln\left(\frac{t_0}{\hat{\varepsilon}}\right)},
\end{equation*}
as stated in \eqref{eq:iter_complexity_bound}.
Here, $\sim$ means that two quantities can be approximated by the same order.
\Eproof
\end{proof}

\beforepara
\paragraph{\textbf{The worst-case iteration complexity:}}
Theorem~\ref{thm:one_step_analysis} shows that for any $\hat{\varepsilon} > 0$,  the number of iterations $k$ to obtain $y^k$ such that $\lambda_{t_k}(y^k) \leq \beta$ and $t_k \leq \hat{\varepsilon}$ does not exceed 
\begin{equation*}
\BigO{\sqrt{\nu_f}\ln\left(\frac{t_0}{\hat{\varepsilon}}\right)}, 
\end{equation*}
which is the same as in standard interior-point methods \cite{Nesterov2004,Nesterov1994} up to a constant factor.
It depends on $\sqrt{\nu_f}$, where $\nu_f$ is the barrier parameter of $f$.
Note that the parameter $\beta$ in Algorithm~\ref{alg:A1} represents the radius of the central path neighborhood as in standard path-following methods. 
While the range of $\beta$ in standard exact path-following methods \cite{Nesterov2004} is $(0, \frac{3-\sqrt{5}}{2}]$, it is $[0, \frac{1}{10}]$ in our method.
Clearly, the latter is much smaller than the former one.
However, this range was roughly estimated in our analysis and it is affected by the inexactness in our algorithm.

As we will show in Subsection~\ref{subsec:convergence_analysis}, the conditions $\lambda_{t_k}(y^k) \leq \beta$ and $t_k \leq \hat{\varepsilon}$ imply an approximate solution of \eqref{eq:constr_cvx} and \eqref{eq:dual_prob}.


\beforesubsec
\subsection{\bf Optimality certification}\label{subsec:convergence_analysis}
\aftersubsec
Our goal is to compute an approximate solution of the primal problem \eqref{eq:constr_cvx}.
The following theorem shows how we can find this approximate solution for both the primal and dual problem.

\begin{theorem}\label{th:key_property7}
Let $\sets{(x^{k+1}, y^k)}$ be the sequence generated by Algorithm~\ref{alg:A1}.
Then, for $t_{k+1} \in (0, 1]$ we have the following guarantees:
\begin{equation}\label{eq:lm41_res1}
{\!\!\!\!}\left\{\begin{array}{ll}
&x^{k+1} \in \intx{\Kc}, \vspace{1ex}\\
& \abss{A^{\top}y^k -\nabla{g}(x^{k+1})}_{x^{k+1},t_{k+1}}^{\ast} \leq \left(\sqrt{\nu_f} + \frac{\delta_k}{1+\delta_k}\right)t_{k+1}, \vspace{1.25ex}\\
&r^k \in y^k + \partial{\phi}(Ax^{k+1}+ e^k), \vspace{1ex}\\
&\tnorms{e^k}_{y^k, t_k+1}^{\ast} \leq t_{k+1}\lambda_{t_{k+1}}(y^k),~\text{and}~~\abss{A^{\top}r^k}_{x^{k+1}, t_{k+1}}^{\ast} \leq t_{k+1}\lambda_{t_{k+1}}(y^k).
\end{array}\right.{\!\!\!\!}
\end{equation}
Consequently, the number of iterations to obtain an $\varepsilon$-primal-dual solution $(x^{k+1}, y^k)$ in the sense of Definition~\ref{de:primal_dual_approx_sols} does not exceed:
\begin{equation}\label{eq:complexity_primal_dual}
k_{\max} := \BigO{\sqrt{\nu_f}\ln\left(\frac{\sqrt{\nu_f} t_0}{\varepsilon}\right)},
\end{equation}
where $t_0 \in (0, 1]$ and $\nu_f$ is the barrier parameter of $f$.
\end{theorem}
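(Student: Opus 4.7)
The plan is to verify each of the four properties in \eqref{eq:lm41_res1} by reading off the optimality conditions that produced $x^{k+1}$ and the inexact proximal-Newton iterate, and then to combine the resulting bounds with Theorem~\ref{thm:one_step_analysis} for the iteration complexity. The feasibility $x^{k+1}\in\intx{\Kc}$ is immediate because, by Step~\ref{step:A1_step2}, $x^{k+1}$ is an approximate minimizer of $\psi_{t_{k+1}}(\cdot;y^k)$, whose barrier component $t_{k+1}f$ forces any finite-value iterate into $\intx{\Kc}$. For the primal optimality bound, the stopping criterion in Step~\ref{step:A1_step2} reads $\abss{\nabla\psi_{t_{k+1}}(x^{k+1};y^k)}_{x^{k+1}, t_{k+1}}^{\ast} \leq \delta_k/(1+\delta_k)$; since $M_{t_{k+1}}^2/4 = 1/t_{k+1}$ for $t_{k+1}\in(0,1]$, this is equivalent to $\abss{\nabla g(x^{k+1}) + t_{k+1}\nabla f(x^{k+1}) - A^{\top}y^k}_{x^{k+1}, t_{k+1}}^{\ast} \leq t_{k+1}\delta_k/(1+\delta_k)$. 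I would then split $A^{\top}y^k - \nabla g(x^{k+1}) = t_{k+1}\nabla f(x^{k+1}) - \bigl[\nabla g(x^{k+1}) + t_{k+1}\nabla f(x^{k+1}) - A^{\top}y^k\bigr]$, apply the triangle inequality in the local dual norm, and control the first piece through the Hessian ordering $\nabla^2\psi_{t_{k+1}}(x)\succeq \nabla^2 f(x)$ combined with the defining inequality of a $\nu_f$-self-concordant barrier (Definition~\ref{de:scb_function}), giving $\abss{\nabla f(x^{k+1})}_{x^{k+1}, t_{k+1}}^{\ast} \leq \sqrt{\nu_f}$. Adding the two contributions yields the desired $\bigl(\sqrt{\nu_f} + \delta_k/(1+\delta_k)\bigr)t_{k+1}$ bound.

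For the dual inclusion, I would use the exact proximal-Newton optimality associated with $\bar{y}^{k+1}$ in \eqref{eq:dual_newton_scheme2}, namely
\begin{equation*}
0 \in \widetilde{\nabla}d_{t_{k+1}}(y^k) + \widetilde{\nabla}^2d_{t_{k+1}}(y^k)(\bar{y}^{k+1}-y^k) + \partial h_{t_{k+1}}(\bar{y}^{k+1}),
\end{equation*}
and note, via \eqref{eq:gradient_mapping}, that $\widetilde{\nabla}^2d_{t_{k+1}}(y^k)(\bar{y}^{k+1}-y^k) = -\widetilde{G}_{t_{k+1}}(y^k)$. Substituting $\widetilde{\nabla}d_{t_{k+1}}(y^k) = t_{k+1}^{-1}Ax^{k+1}$ and $\partial h_{t_{k+1}}(y) = -t_{k+1}^{-1}\partial\phi^{\ast}(-y)$ (both valid for $t_{k+1}\in(0,1]$) and then multiplying by $t_{k+1}$ produces $Ax^{k+1} - t_{k+1}\widetilde{G}_{t_{k+1}}(y^k) \in \partial\phi^{\ast}(-\bar{y}^{k+1})$; Fenchel duality then gives $-\bar{y}^{k+1} \in \partial\phi\bigl(Ax^{k+1} - t_{k+1}\widetilde{G}_{t_{k+1}}(y^k)\bigr)$. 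Choosing $e^k := -t_{k+1}\widetilde{G}_{t_{k+1}}(y^k)$ and $r^k := y^k - \bar{y}^{k+1} = \widetilde{\nabla}^2d_{t_{k+1}}(y^k)^{-1}\widetilde{G}_{t_{k+1}}(y^k)$ delivers the required inclusion $r^k \in y^k + \partial\phi(Ax^{k+1} + e^k)$.

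The two norm bounds then follow by direct calculation: $\tnorms{e^k}_{y^k,t_{k+1}}^{\ast} = t_{k+1}\tnorms{\widetilde{G}_{t_{k+1}}(y^k)}_{y^k,t_{k+1}}^{\ast} = t_{k+1}\lambda_{t_{k+1}}(y^k)$ by the definition in \eqref{eq:inexact_NT_decrement}; for $\abss{A^{\top}r^k}_{x^{k+1},t_{k+1}}^{\ast}$ I would exploit the identity $A\nabla^2\psi_{t_{k+1}}(x^{k+1})^{-1}A^{\top} = t_{k+1}^2\,\widetilde{\nabla}^2d_{t_{k+1}}(y^k)$ read off from \eqref{eq:inexact_oracle_k} (since $M_{t_{k+1}}^4/16 = 1/t_{k+1}^2$), which after squaring collapses $\abss{A^{\top}r^k}_{x^{k+1},t_{k+1}}^{\ast}$ to $t_{k+1}\tnorms{\widetilde{G}_{t_{k+1}}(y^k)}_{y^k,t_{k+1}}^{\ast} = t_{k+1}\lambda_{t_{k+1}}(y^k)$, as claimed.

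For the complexity estimate \eqref{eq:complexity_primal_dual}, I would combine the four bounds in \eqref{eq:lm41_res1} with Theorem~\ref{thm:one_step_analysis}. The proof of that theorem already guarantees $\lambda_{t_{k+1}}(y^k) \leq 2.1\beta$, and since $\delta_k/(1+\delta_k)\leq 1$, matching the four conditions in Definition~\ref{de:primal_dual_approx_sols} reduces to the single requirement $t_{k+1}\bigl(\sqrt{\nu_f} + \mathcal{O}(1)\bigr) \leq \varepsilon$, i.e., $t_{k+1} \leq \hat{\varepsilon} := \Theta(\varepsilon/\sqrt{\nu_f})$. Substituting this $\hat{\varepsilon}$ into \eqref{eq:iter_complexity_bound} gives the stated $k_{\max} = \BigO{\sqrt{\nu_f}\ln(\sqrt{\nu_f}\,t_0/\varepsilon)}$. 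The main obstacle I anticipate concerns the construction of $r^k$ and $e^k$: Algorithm~\ref{alg:A1} only computes the inexact iterate $y^{k+1}$, not the exact prox point $\bar{y}^{k+1}$, so the pair above is an existential certificate rather than a computable one. If one wants bounds expressed purely in terms of the computed $y^{k+1}$, the $\epsilon_k$-slack from Step~\ref{step:A1_step4} must be absorbed into slightly inflated constants via the Hessian sandwich in \eqref{eq:inexact_oracle_properties} and the accuracy condition \eqref{eq:inexact_yk}; but for the statement as written, the exact-prox construction above suffices.
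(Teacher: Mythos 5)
Your proposal is correct and follows essentially the same route as the paper: the same triangle-inequality decomposition with the barrier bound $\abss{\nabla f(x^{k+1})}^{\ast}_{x^{k+1},t_{k+1}}\leq\sqrt{\nu_f}$, the same construction $r^k:=y^k-\bar y^{k+1}$, $e^k:=-t_{k+1}\widetilde{G}_{t_{k+1}}(y^k)$ from the exact-prox optimality of $\bar y^{k+1}$, and the same norm identities and substitution of $\hat\varepsilon=\Theta(\varepsilon/\sqrt{\nu_f})$ into \eqref{eq:iter_complexity_bound}. Your closing remark that $(r^k,e^k)$ are an existential certificate (defined via the uncomputed $\bar y^{k+1}$) is an accurate observation about the structure of the statement and matches the paper's implicit treatment.
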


\begin{proof}
From Step \ref{step:A1_step2} of Algorithm \ref{alg:A1}, we can see that $x^{k+1} := \widetilde{x}^{\ast}_{t_{k+1}}(y^k) \in \intx{\Kc}$.
Moreover, Step \ref{step:A1_step2} also leads to
\begin{equation}\label{eq:tm2_est1}
{\!\!\!\!\!}\begin{array}{ll}
\abss{\nabla{g}(x^{k\!+\!1}) - A^{\top}y^k}^{\ast}_{x^{k\!+\!1}, t_{k\!+\!1}} {\!\!}& \leq \abss{\nabla{g}(x^{k\!+\!1}) - A^{\top}y^k + t_{k+1}\nabla{f}(x^{k+1})}^{\ast}_{x^{k+1}, t_{k+1}}  \vspace{1ex}\\
& + {~} t_{k+1}\abss{\nabla{f}(x^{k+1})}_{x^{k+1},t_{k+1}}^{\ast} \vspace{1ex}\\
&\leq \frac{\delta_kt_{k+1}}{1+\delta_k} + t_{k+1}\abs{\nabla{f}(x^{k+1})}_{x^{k+1},t_{k+1}}^{\ast}.
\end{array}{\!\!\!\!\!}
\end{equation}
Next, for $t \in (0, 1]$, it is obvious that  $\nabla^2{\psi_t}(x;y) =  \tfrac{M_t^2}{4}\big[\nabla^2{g}(x) + t\nabla^2{f}(x)\big] = \nabla^2{f}(x) + \frac{1}{t}\nabla^2{g}(x)$.
Consequently, one has $\nabla^2{\psi_t}(x;y) \succeq \nabla^2{f}(x)$.
Using this fact, we can easily show that
\begin{equation*}
\begin{array}{l}
\abs{\nabla{f}(x^{k+1})}_{x^{k+1},t_{k+1}}^{\ast} \leq \norms{\nabla{f}(x^{k+1})}_{x^{k+1}}^{\ast} \leq \sqrt{\nu_f}. 
\end{array}
\end{equation*}
Combining this inequality and \eqref{eq:tm2_est1}, we obtain the second estimate of \eqref{eq:lm41_res1}.

Now, from \eqref{eq:dual_newton_scheme}, we have
\vspace{-0.75ex}
\begin{equation*} 
-\widetilde{\nabla}^2{d_{t_{k+1}}}(y^k)(\bar{y}^{k+1} - y^k)  - \widetilde{\nabla}{d_{t_{k+1}}}(y^k) \in \partial{h_{t_{k+1}}}(\bar{y}^{k+1}).
\vspace{-0.75ex}
\end{equation*}
Using \eqref{eq:inexact_oracle} and the definition of $h_t$, the last estimate becomes
\vspace{-0.75ex}
\begin{equation*} 
-t_{k+1}\widetilde{\nabla}^2{d_{t_{k+1}}}(y^k)(\bar{y}^{k+1} - y^k)    \in Ax^{k+1} - \partial{\phi^{\ast}}(-\bar{y}^{k+1}).
\vspace{-0.75ex}
\end{equation*}
If we define $r^k :=  y^k - \bar{y}^{k+1}$ and $e^k := t_{k+1}\widetilde{\nabla}^2{d_{t_{k+1}}}(y^k)(\bar{y}^{k+1} - y^k)$, then the last expression leads to
\vspace{-0.75ex}
\begin{equation*} 
-e^k   \in Ax^{k+1} - \partial{\phi^{\ast}}(-y^k + r^k)~~\Leftrightarrow~~r^k \in y^k + \partial{\phi}(Ax^{k+1}+ e^k).
\vspace{-0.75ex}
\end{equation*}
It is obvious to show that
\begin{equation*}
\tnorms{e^k}_{y^k, t_k+1}^{\ast} = t_{k+1}\tnorms{y^k - \bar{y}^{k+1}}_{y^k, t_k+1} = t_{k+1}\lambda_{t_{k+1}}(y^k),
\end{equation*}
which is the first statement in the last line of \eqref{eq:lm41_res1}.

Now, from \eqref{eq:inexact_oracle_k} and $t_{k+1} \in (0,1]$, we have $\widetilde{\nabla}^2d_{t_{k+1}}(y^k) = \frac{1}{t_{k+1}^2}A\big(\nabla^2f(x^{k+1}) + \frac{1}{t_{k+1}}\nabla^2g(x^{k+1})\big)^{-1}A^{\top}$. 
This implies that
\begin{equation*}
\begin{array}{ll}
\lambda_{t_{k+1}}(y^k)^2 &= (\bar{y}^{k+1} - y^k)^{\top}\widetilde{\nabla}^2d_{t_{k+1}}(y^k)(\bar{y}^{k+1} - y^k) \vspace{1ex}\\
&= \frac{1}{t_{k+1}^2}(r^k)^{\top}A\big(\nabla^2f(x^{k+1}) + \frac{1}{t_{k+1}}\nabla^2g(x^{k+1})\big)^{-1}A^{\top}r^k \vspace{1ex}\\
&= \frac{1}{t_{k+1}^2}\big(\abss{A^{\top}r^k}_{x^{k+1}, t_{k+1}}^{\ast}\big)^2.
\end{array}
\end{equation*}
Therefore, we have $\abss{A^{\top}r^k}_{x^{k+1}, t_{k+1}}^{\ast} = t_{k+1}\lambda_{t_{k+1}}(y^k)$, which proves the second statement in the last line of \eqref{eq:lm41_res1}.

From \eqref{eq:lm41_res1}, to obtain an $\varepsilon$-primal-dual solution $(x^{k+1}, y^k)$ in the sense of Definition~\ref{de:primal_dual_approx_sols}, we need to set $\left(\sqrt{\nu_f} + \frac{\delta_k}{1+\delta_k}\right)t_{k+1} \leq \varepsilon$ and $t_{k+1}\lambda_{t_{k+1}}(y^k) \leq \varepsilon$.
Since $\lambda_{t_{k+1}}(y^k) \leq 2.1\beta$ (see the proof in Theorem \ref{thm:one_step_analysis}) and $\delta_k \leq \frac{\beta}{100}$, we can set $t_{k+1} \leq \hat{\varepsilon}$ such that
\begin{equation*}
\varepsilon \geq  \hat{\varepsilon}(\sqrt{\nu_f} + 1) \geq \hat{\varepsilon} \max\set{\sqrt{\nu_f} + \frac{0.01\beta}{1+0.01\beta}, 2.1\beta},
\end{equation*}
i.e., $\hat{\varepsilon} \leq  \frac{\varepsilon}{(1+\sqrt{\nu_f})}$. Combining this expression and \eqref{eq:iter_complexity_bound}, we can show that the number of iterations to obtain an $\epsilon$-primal-dual solution does not exceed $\BigO{\sqrt{\nu_f}\ln\left(\frac{\sqrt{\nu_f}t_0}{\varepsilon}\right)}$, which is exactly \eqref{eq:complexity_primal_dual}.
\Eproof
\end{proof}

\beforepara
\paragraph{\textbf{Discussion:}}
Theorem~\ref{th:key_property7} estimates the maximum iterations $k_{\max}$ to obtain an $\varepsilon$-primal-dual solution $(x^{k+1}, y^k)$ of \eqref{eq:constr_cvx} and \eqref{eq:dual_prob}.
It shows that such a number of iterations remains the same as in standard path-following methods \cite{Nesterov2004} up to a constant factor.
Although the norms in \eqref{eq:lm41_res1} are local norms, but this is the standard metric used in general interior-point methods \cite{Nesterov2004,Nesterov1994}.

\beforesec
\subsection{\bf Finding an initial point in Algorithm~\ref{alg:A1}}\label{subsec:initial_point}
\aftersec
We need to find $(x^0, y^0)$ such that the condition \eqref{eq:initial_point_cond} holds.
As in standard interior-point methods, we need to perform a damped proximal-Newton method.
Such a method can be found in, e.g. \cite{TranDinh2013e,Tran-Dinh2013a}, but since we use inexact oracles, we need to customize this method in our context.
More specifically, we describe this routine in Algorithm~\ref{alg:A2}.

\begin{algorithm}[ht!]\caption{(\textit{Find an initial point $x^0,y^0$})}\label{alg:A2}
\begin{algorithmic}[1]
\normalsize
\STATE\textbf{Initialization.}\label{step:A2_step0}~Choose an initial point $\hat{y}^0 \in\R^n$ and fix a value $t_0 \in (0, 1]$.
\STATE\textbf{Main iteration.}~\textrm{For $j = 0$ to $j_{\max}$, perform}
\vspace{0.75ex}
\STATE\hspace{0.2cm}\label{step:A2_step2} Solve approximately \eqref{eq:opt_cond_subprob} at $y = \hat{y}^j$ up to an accuracy $\delta_j \in (0, \frac{\beta}{100}]$ to get $\hat{x}^j := \tilde{x}_{t_0}^{\ast}(\hat{y}^j)$, i.e.:
\begin{equation*}
\abss{\nabla\psi_{t_0}(\hat{x}^j; \hat{y}^j)}_{\hat{x}^j, t_0}^{\ast} \leq \tfrac{\delta_j}{1+\delta_j}.
\vspace{-1.5ex}
\end{equation*}
\STATE\hspace{0.2cm}\label{step:A2_step3}(\textbf{Inexact oracles}): Evaluate inexact gradient and Hessian of $d_{t_0}$ as
\begin{equation}\label{eq:inexact_oracle_j}
\left\{\begin{array}{ll}
\widetilde{\nabla}{d_{t_0}}(\hat{y}^j) &:= \frac{M_{t_0}^2}{4}A\hat{x}^j ,\vspace{1ex}\\
\widetilde{\nabla}^2{d_{t_0}}(\hat{y}^j) &:= \frac{M_{t_0}^4}{16}A\nabla^2{\psi_{t_0}}(\hat{x}^j)^{-1}A^{\top}.
\end{array}\right.
\vspace{-1ex}
\end{equation}
\STATE\hspace{0.2cm}\label{step:A2_step4}(\textbf{Inexact damped-step proximal-Newton step}):
Compute $\hat{s}^j$ up to an accuracy $\epsilon_j \in (0, \frac{\beta}{100}]$  and update $\hat{y}^{j}$, i.e.:
\begin{equation*}
\left\{\begin{array}{ll}
\hat{s}^j &:\approx s^j:=\text{prox}_{h_{t_0}}^{\widetilde{\nabla}^2d_{t_0}(\hat{y}^j)}\left( \hat{y}^j - \widetilde{\nabla}^2d_{t_0}(\hat{y}^j)^{-1}\widetilde{\nabla}d_{t_0}(\hat{y}^j) \right) \vspace{1ex}\\
\hat{y}^{j+1} &:= (1-\alpha_j)\hat{y}^j + \alpha_j\hat{s}^j,
\end{array}\right.
\end{equation*}
where $\alpha_j := \frac{(\hat{\lambda}_j - \epsilon_j - \delta_j)(1-\delta_j)^2}{\big((1-\delta_j)(\hat{\lambda}_j - \epsilon_j - \delta_j) + 1\big)\hat{\lambda}_j} \in (0, 1)$ and $\hat{\lambda}_j := \tnorms{\hat{s}^j - \hat{y}^j}_{\hat{y}^j,t_0}$. 
\STATE\hspace{0cm}\textbf{End.}
\end{algorithmic}
\end{algorithm}

We terminate Algorithm~\ref{alg:A2} if we find $x^0 := \hat{x}^{j_{\max}}$ and $y^0 := \hat{y}^{j_{\max}}$ such that \eqref{eq:initial_point_cond} holds. 
Since the constraint of $x^0$ in \eqref{eq:initial_point_cond} is always satisfied from Step~\ref{step:A2_step2} of Algorithm~\ref{alg:A2}, we only need to guarantee that $\lambda_{t_0}(y^0) \leq \beta$.

The following theorem estimates the number of iterations to obtain $(x^0, y^0)$ satisfying  \eqref{eq:initial_point_cond}.

\begin{theorem}\label{th:initial_point}
Let us define $\hat{\lambda}_j := \tnorms{\hat{s}^j - \hat{y}^j}_{\hat{y}^j,t_0}$ and $\lambda_j := \tnorms{s^j - \hat{y}^j}_{\hat{y}^j,t_0}$. Let $\sets{(\hat{x}^j, \hat{y}^j)}$ be the sequence generated by Algorithm~\ref{alg:A2}, where we choose $\delta_j, \epsilon_j \in \big(0, \frac{\beta}{100}\big]$ and the step-size 
\begin{equation}\label{eq:step-size}
\alpha_j := \frac{(\hat{\lambda}_j - \epsilon_j - \delta_j)(1-\delta_j)^2}{\big[1 + (1-\delta_j)(\hat{\lambda}_j - \epsilon_j - \delta_j)\big]\hat{\lambda}_j} \in (0, 1). 
\end{equation}
Then, after at most finite number of iterations $j_{\max}$ as
\begin{equation}\label{eq:initial_point}
j_{\max} := \left\lfloor \frac{D_{t_0}(\hat{y}^0) - D_{t_0}(y^{\ast}_{t_0})}{\omega\left(0.97\beta(1 - 10^{-2}\beta)\right)} \right\rfloor + 1,
\end{equation}
we obtain $y^0 := \hat{y}^{j_{\max}}$ and $x^0 := \hat{x}^{j_{\max}}$ such that $\lambda_{t_0}(y^0) \leq \beta$ and \eqref{eq:initial_point_cond} holds, where $y^{\ast}_{t_0}$ is the optimal solution of \eqref{eq:smoothed_dual_prob} at $t := t_0$.
\end{theorem}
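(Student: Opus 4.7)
The plan is to adapt the classical damped-step self-concordant analysis of \cite{Nesterov2004} to the proximal and inexact setting at hand, tracking both sources of error through the estimates of Proposition~\ref{pro:inexact_oracle}. The goal is to prove that, as long as the termination condition $\lambda_{t_0}(\hat{y}^j) \le \beta$ is not met, one full damped proximal-Newton step reduces the smoothed dual objective $D_{t_0}$ by at least $\omega\bigl(0.97\beta(1-10^{-2}\beta)\bigr)$; the bound \eqref{eq:initial_point} then follows by dividing the initial gap $D_{t_0}(\hat{y}^0) - D_{t_0}(y^\ast_{t_0})$ by this per-iteration decrease. The primal part of \eqref{eq:initial_point_cond} is automatic because Step~\ref{step:A2_step2} enforces it at every outer iteration.

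\textbf{Key descent inequality.} Since $d_{t_0}$ is standard self-concordant by Proposition~\ref{pro:properties_of_dt}, along the segment $\hat{y}^j + \alpha(\hat{s}^j - \hat{y}^j)$, as long as $\alpha\hat{\lambda}_j < 1$, one has the one-dimensional bound
\[
d_{t_0}(\hat{y}^{j+1}) \le d_{t_0}(\hat{y}^j) + \alpha_j\iprods{\nabla d_{t_0}(\hat{y}^j),\hat{s}^j-\hat{y}^j} + \omega_\ast\bigl(\alpha_j\hat{\lambda}_j\bigr).
\]
Combining this with the convex interpolation $h_{t_0}(\hat{y}^{j+1})\le (1-\alpha_j)h_{t_0}(\hat{y}^j) + \alpha_j h_{t_0}(\hat{s}^j)$, the $\epsilon_j$-approximate optimality of $\hat{s}^j$ for the quadratic model $Q_{t_0}$, the gradient mismatch $\tnorms{\widetilde{\nabla}d_{t_0}(\hat{y}^j)-\nabla d_{t_0}(\hat{y}^j)}_{\hat{y}^j,t_0}^\ast \le \delta_j$, and the Hessian distortion $(1-\delta_j)^{\pm 2}$ from Proposition~\ref{pro:inexact_oracle}, I expect to derive an estimate of the form
\[
D_{t_0}(\hat{y}^{j+1}) - D_{t_0}(\hat{y}^j) \le -\alpha_j\bigl(\hat{\lambda}_j-\epsilon_j-\delta_j\bigr)\hat{\lambda}_j + \omega_\ast\!\Bigl(\tfrac{\alpha_j\hat{\lambda}_j}{1-\delta_j}\Bigr).
\]
Minimizing the right-hand side in $\alpha_j \in (0,1)$ via $\omega_\ast'(\tau) = \tau/(1-\tau)$ then yields precisely the closed-form step-size \eqref{eq:step-size}, and substituting back produces
\[
D_{t_0}(\hat{y}^{j+1}) - D_{t_0}(\hat{y}^j) \le -\omega\bigl((1-\delta_j)(\hat{\lambda}_j-\epsilon_j-\delta_j)\bigr).
\]

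\textbf{Uniform lower bound and counting.} From \eqref{eq:gradient_mapping}--\eqref{eq:inexact_NT_decrement} one has the identity $\lambda_{t_0}(\hat{y}^j) = \tnorms{s^j - \hat{y}^j}_{\hat{y}^j,t_0}$, where $s^j$ is the \emph{exact} prox step. Combining with $\tnorms{\hat{s}^j-s^j}_{\hat{y}^j,t_0}\le \epsilon_j$ (from Definition~\ref{de:inexact_sol}) gives $\hat{\lambda}_j \ge \lambda_{t_0}(\hat{y}^j) - \epsilon_j$; hence while $\lambda_{t_0}(\hat{y}^j) > \beta$ still fails and $\delta_j,\epsilon_j\le \beta/100$, we obtain $(1-\delta_j)(\hat{\lambda}_j-\epsilon_j-\delta_j)\ge 0.97\beta(1-10^{-2}\beta)$ and thus a uniform decrease $\omega\bigl(0.97\beta(1-10^{-2}\beta)\bigr) > 0$ per step. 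Summing over $j$ and using $D_{t_0}(\hat{y}^j)\ge D_{t_0}(y^\ast_{t_0})$ yields exactly \eqref{eq:initial_point}.

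\textbf{Main obstacle.} The delicate step is extracting the clean decoupled form of the descent inequality above. I must carefully compose three sources of perturbation—the Hessian distortion $(1-\delta_j)^{\pm 2}$, the gradient mismatch of order $\delta_j$, and the inexact prox step of order $\epsilon_j$—using the $\omega/\omega_\ast$ calculus in the same spirit as the manipulations already carried out in Lemma~\ref{le:key_property0}. If this bookkeeping is done so that the error terms factor through $\hat{\lambda}_j - \epsilon_j - \delta_j$ and $(1-\delta_j)$ in the pattern indicated above, then the specific formula \eqref{eq:step-size} is obtained by routine differentiation and the counting argument is straightforward; otherwise the constant $0.97\beta(1-10^{-2}\beta)$ would have to be re-tuned accordingly.
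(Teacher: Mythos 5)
Your proposal matches the paper's own argument: the key descent inequality you anticipate is exactly the content of Lemma~\ref{le:initial_point} (including the intermediate bound $D_{t_0}(\hat{y}^{j+1}) - D_{t_0}(\hat{y}^j) \le -\alpha_j[\hat\lambda_j^2 - (\epsilon_j+\delta_j)\hat\lambda_j] + \omega_\ast(\alpha_j\hat\lambda_j/(1-\delta_j))$, optimized over $\alpha_j$ to give \eqref{eq:step-size} and the per-step decrease $\omega((1-\delta_j)(\hat\lambda_j-\epsilon_j-\delta_j))$), and the theorem's proof then uses $\hat\lambda_j \ge \lambda_j - \epsilon_j > (1-10^{-2})\beta$ and sums the uniform decrease exactly as you propose. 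The only cosmetic imprecision is that your first self-concordance display should read $\omega_\ast(\alpha_j\norms{\hat s^j-\hat y^j}_{\hat y^j,t_0})$ in the exact local norm rather than $\omega_\ast(\alpha_j\hat\lambda_j)$, but you correct this via the Hessian-distortion factor $(1-\delta_j)^{-1}$ in the next display, just as the paper does.
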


\begin{proof}
Note that at each iteration $j$ of Algorithm~\ref{alg:A2}, we always have $\lambda_j > \beta$.
By the triangle inequality and the choice of $\epsilon_j$, we can easily show that
\begin{equation*}
\hat{\lambda}_j  \geq \tnorms{s^j - \hat{y}^j}_{\hat{y}^j,t_0} - \tnorms{s^j - \hat{s}^j}_{\hat{y}^j,t_0} \geq \lambda_j - \epsilon_j > (1-10^{-2})\beta.
\end{equation*}
In addition, from Lemma~\ref{le:initial_point} in  Appendix~\ref{apdx:th:initial_point}, we have
\begin{equation*}
D_{t_0}(\hat{y}^{j+1})  \leq D_{t_0}(\hat{y}^{j}) -  \omega\left((\hat{\lambda}_j - \epsilon_j - \delta_j)(1-\delta_j)\right),
\end{equation*}
where $\omega(\tau) := \tau - \ln(1+\tau) \geq 0$.
Using $\epsilon_j \leq 10^{-2}\beta$, $\delta_j \leq 10^{-2}\beta$, and $\hat{\lambda}_j \geq (1-10^{-2})\beta$ in the above inequality, we get
\begin{equation*}
D_{t_0}(\hat{y}^{j+1})  \leq D_{t_0}(\hat{y}^{j}) - \omega\left(0.97\beta(1 - 10^{-2}\beta)\right).
\end{equation*}
Summing up this inequality from $j=0$ to $j = j_{\max}$, we obtain
\begin{equation*}
j_{\max} \omega(0.97\beta(1 - 10^{-2}\beta)) \leq D_{t_0}(\hat{y}^{0}) - D_{t_0}(\hat{y}^{j_{\max}}) \leq D_{t_0}(\hat{y}^{0}) - D_{t_0}(y^{\star}_{t_0}),
\end{equation*}
which implies $j_{\max} \leq \frac{D_{t_0}(\hat{y}^{0}) - D_{t_0}(y^{\star}_{t_0})}{\omega(0.97\beta(1 - 10^{-2}\beta))}$.
Consequently, we obtain \eqref{eq:initial_point}.
\Eproof
\end{proof}

\beforepara
\paragraph{\textbf{Discussion:}}
Theorem~\ref{th:initial_point} shows that the number of iterations to obtain a starting point $(x^0, y^0)$ is finite even with inexact oracles and inexact proximal-Newton methods.
However, the convergence rate of Algorithm~\ref{alg:A2} is sublinear in $j$.
If $t_0$ is large (i.e., close to $1$), Algorithm~\ref{alg:A2} often requires a small number of iterations.
Another possibility is to apply a path-following procedure as in \cite{TranDinh2015f} to obtain a new variant with linear convergence rate.
Note that the per-iteration complexity of Algorithm~\ref{alg:A2} is essentially the same as in Algorithm~\ref{alg:A1} since the computation of $\hat{\lambda}_j$ is neglectable.  
In particular, if we choose $\epsilon_j = \delta_j = 0$, the steps size $\alpha_j$ will become the standard damped Newton step-size $\frac{1}{1+\lambda_j}$ in the theory of self-concordant function \cite{Nesterov1994}.

\beforesec
\section{Numerical Experiments}\label{sec:numerical_experiment}
\aftersec
We provide two numerical examples to illustrate our algorithm and compare it with some existing methods.
We choose SDPT3 \cite{Toh2010} as a common used conic solver, and Chambolle-Pock's (CP) primal-dual method \cite{Chambolle2011} as one of the most powerful first-order methods that can handle our problem.
The first example is the well-known network utility maximization (NUM) problem, and the second one is the spectrum management problem for multi-user DSL networks studied in \cite{Tsiaflakis2008}.
Our method and the CP method are implemented in Matlab 2018b, running on a Linux server with 3.4GHz Intel Xeon E5 and 16Gb memory.

\beforesubsec
\subsection{\bf Implementation remarks}\label{subsec:impl_details}
\aftersubsec
We discuss how we implement two main steps of Algorithm~\ref{alg:A1}  as follows.
First, we need to solve the slave problem at Step~\ref{step:A1_step2} up to a given accuracy $\delta_k$ such that $\delta_k \leq 10^{-2}\beta$.
Solving this problem is equivalent to solving the nonlinear equation $\nabla\psi_{t_{k+1}}(x; y^k) = 0$ in $x$.
Since $\psi_{t_{k+1}}(\cdot; y^k)$ is standard self-concordant, we can apply a damped-step Newton method to solve it.
Combining this method and a warm-start strategy, we can solve this equation efficiently.
Second, if $\phi =\delta_{\set{b}}$ in \eqref{eq:constr_cvx} for a given $b\in\R^n$, then the master problem at Step~\ref{step:A1_step4} reduces to a positive definite linear system $\widetilde{\nabla}^2d_{t_{k+1}}(y^k)(y - y^k) = -\widetilde{\nabla}d_{t_{k+1}}(y^k) + 0.25M_{t_{k+1}}^2b$, which can be efficiently solved by, e.g., preconditioned conjugate gradient methods.
However, since $\phi$ usually does not have such a simple form, we need to apply iterative methods such as accelerated proximal gradient method \cite{Beck2009,Nesterov2004} to solve this problem which has a linear convergence rate.
Note that we can also apply a semi-smooth Newton-type methods as in \cite{yang2015sdpnalp} to solve this problem efficiently.
In our numerical test, we use FISTA which seems working well.

\beforesubsec
\subsection{\bf Network Utility Maximization}\label{subsec:NUM_exam}
\aftersubsec
Consider a network consisting of a finite set $\Sc$ of  $N$ nodes and a finite set $\mathcal{E}$ of undirected capacitated edges. 
Let $x_{ij}$ denote the rate of sending data from node $i$ to  node $j$.
We assume that such a flow $f_{ij}$ from node $i$ to node $j$ is fixed and unique (we usually choose $f_{ij}$ to be the shortest path from $i$ to $j$).

Assume that each node $i$ is associated with a utility function $u_i(x_i) := \log\big(d_i^{\top}x_i + \mu_i\big)$, where $x_i := (x_{i1}, \cdots, x_{iN})^{\top}$, $d_i := (d_{i1},\cdots, d_{iN})^{\top}$ and $\mu_i$ is a scalar. 
Since we ignore self-links from node $i$ to itself,  we set $d_{ii} = 0$ and $f_{ii} = \emptyset$. 
We further assume that the rate $x_{ij}$ is constrained to lie in a given interval $[0, M]$, where the scalar $M$ denotes the maximum capacity of flows.

Under this setting, we formulate the problem of interest into the following constrained convex optimization problem called NUM:
\vspace{-0.75ex}
\begin{equation}\label{eq:NUM_exam}
\left\{\begin{array}{cl}
\displaystyle\max_{x} &\Big\{ \displaystyle\sum_{i\in\Sc} \ln(d_i^{\top}x_i + \mu_i) - \frac{\rho}{2}\norms{x_i - r_i}^2  \Big\} \vspace{1ex}\\ 
 \textrm { s.t. } &  L_e \leq \displaystyle\sum_{ e \in f_{ij}} x_{ij} \leq U_e,~~\forall e \in \mathcal{E}, \vspace{1ex}\\ 
 & 0 \leq x_{ij} \leq M,~~~ \forall i,j \in \Sc. 
\end{array}\right.
\vspace{-0.75ex}
\end{equation}
Here, $L_e$ and $U_e$ are the lower bound and upper bound  capacity of each edge, respectively, 
$r_{ij}$ is the initial designed rate from node $i$ to node $j$ and we do not want to have the rate $x_{ij}$ to be far away from our target $r_{ij}$, and $\rho$ is the corresponding penalty parameter to control the distance from $x_{ij}$ to $r_{ij}$. 
By defining $g(x) := -\sum_{i\in\Sc} \ln(d_i^{\top}x_i + \mu_i) + \frac{\rho}{2}\norms{x_i - r_i}^2$, $Ax = \sum_{e\in f_{ij}}x_{ij}$, $\phi(\cdot) := \delta_{[L_e, U_e]}(\cdot)$, and $\Kc := [0, M]$, we can reformulate \eqref{eq:NUM_exam} into \eqref{eq:constr_cvx}.
Clearly, this problem satisfies Assumptions~\ref{as:A0} and \ref{as:A1}.

We implement Algorithm~\ref{alg:A1} using Algorithm~\ref{alg:A2} to find an initial point using $t_0 := 0.25$.
We also implement the Chambolle-Pock method in \cite{Chambolle2011} and use SDPT3 to solve \eqref{eq:NUM_exam} as our competitors.
Note that SDPT3 can directly handle $\log$-terms in $g$ compared to other interior-point solvers such as SeDuMi, SDPA, or Mosek.
To avoid solving subproblems in the Chambolle-Pock method, we reformulate \eqref{eq:NUM_exam} by introducing auxiliary variables $z_i := d_i^{\top}x_i + \mu_i$ for $i\in\Sc$.
Since the Chambolle-Pock method has two step-sizes $\tau$ and $\sigma$, we tune $\tau$ for each run and let $\sigma := 0.99/(\tau\norms{K}^2)$, where $K$ is the linear operator obtained from reformulating \eqref{eq:NUM_exam} into a composite form. The best values of $\tau$ we found are between $10^{-6}$ and $10^{-7}$ depending on problem.

All algorithms are terminated  when both infeasibility and relative duality gap reach $10^{-7}$ accuracy or the maximum number of iterations $k_{\max} := 20,000$ is exceeded.
In the first case, we certify that the problem is ``solved'', while in the second case, we mark it by ``*''.
If problem is too big to solve by our computer, we also mark it by ``*''.

We use the ``tech-router-rf'' dataset from \href{http://networkrepository.com/tech-routers-rf.php}{http://networkrepository.com/tech-routers-rf.php} from \cite{rocketfuel}, where we have approximately $2000$ nodes and $6000$ edges. In this network, each node is either a router or a computer IP. Each computer IP has to go through one or multiple routers to send data to another computer IP. 
For larger networks, we use the ``tech-pgp'' dataset from \href{http://networkrepository.com/tech-pgp.php}{http://networkrepository.com/tech-pgp.php} from \cite{boguna2004models}, which is a social network with approximately $11000$ nodes and $24000$ edges.
Given a network structure, we generate the input data as follows.
The initial designed rate $r_i$ are generated from a uniform distribution $\Uc(0, 1)$ between $0$ and $1$.
The upper and lower bounds of capacity are generated as $L_e := (1 - \Uc(0, 0.5))\bar{b}$ and $U_e := (1 + \Uc(0, 0.5))\bar{b}$, where $\bar{b} := \sum_{i \in \Sc} A_{i}r_{i}$. 
The maximum limit of rate $M$ is $1$ and the penalty paramter $\rho$ is chosen to be $0.01$. 
Both $d_i$ and $\mu_i$ are generated randomly using $\Uc(0, 1)$.
To have different problem instances, we use different sub-networks of the original one.

We run three algorithms on $10$ problems instances of different sizes.
The results are reported in Table \ref{table:result_NUM}, where $n$ is the number of linear inequality constraints, $p$ is the number of variables in \eqref{eq:NUM_exam}, \texttt{IPLD} is Algorithm~\ref{alg:A1}, and CP is the Chambolle-Pock method in \cite{Chambolle2011}.

\begin{table}[!htbp]
\vspace{-2ex}
\newcommand{\cell}[1]{{\!\!}#1{\!\!}}
\caption{Numerical results of three solvers on 10 problem instances of \eqref{eq:NUM_exam}.}
\label{table:result_NUM}
\resizebox{\textwidth}{!}{
\begin{tabular}{| r  r | rrr | rrr | rrr |}
\hline
\multicolumn{2}{|c}{Problem size}  &  \multicolumn{3}{|c}{CPU time [s]}  &     \multicolumn{3}{|c}{Feasibility violation}  &  \multicolumn{3}{|c|}{Objective value $f^{\star}$}          \\ \hline
\cell{$n~~$}   & \cell{$p~~~~$}       & \cell{\texttt{IPLD}} & \cell{CP}     & \cell{SDPT3}  & \cell{\texttt{IPLD}}    & \cell{CP}        & \cell{SDPT3}     & \cell{\texttt{IPLD}}       &  \cell{CP}         & \cell{SDPT3}     \\ \hline
\cell{96}  & \cell{17,686}   & \cell{0.70}   & \cell{3.80}   & \cell{4.24}   & \cell{4.747e-09} & \cell{9.986e-08} & \cell{0.000e+00} & \cell{-60.1375}   & \cell{-60.1375}   & \cell{-60.1375}  \\ \hline
\cell{188} & \cell{29,502}   & \cell{1.31}   & \cell{4.44}   & \cell{9.59}  & \cell{5.126e-09} & \cell{9.974e-08} & \cell{0.000e+00} & \cell{-116.8216}   & \cell{-116.8216}   & \cell{-116.8216}  \\ \hline
\cell{239} & \cell{38,050}   & \cell{1.85}   & \cell{6.64}   & \cell{11.75}  & \cell{7.227e-10} & \cell{9.983e-08} & \cell{0.000e+00} & \cell{-171.4066}  & \cell{-171.4066}  & \cell{-171.4066} \\ \hline
\cell{306} & \cell{53,048}   & \cell{2.43}   & \cell{9.45}   & \cell{227.32}  & \cell{2.266e-10} & \cell{9.995e-08} & \cell{0.000e+00} & \cell{-228.6001}  & \cell{-228.6001}  & \cell{-228.6001} \\ \hline
\cell{242} & \cell{72,016}   & \cell{2.78}   & \cell{10.25}   & \cell{809.57} & \cell{9.055e-09} & \cell{9.982e-08} & \cell{0.000e+00} & \cell{-288.6970}  & \cell{-288.6970}  & \cell{-272.1732} \\ \hline
\cell{324} & \cell{125,848}  & \cell{5.61}   & \cell{26.98}  & *  & \cell{1.107e-08} & \cell{9.987e-08} & * & \cell{-569.2405}  & \cell{-569.2405}  & \cell{*}   \\ \hline
\cell{658} & \cell{243,936}  & \cell{19.37}  & \cell{78.58}  & *  & \cell{5.478e-09} & \cell{9.987e-08} & * & \cell{-1133.8747}  & \cell{-1133.8747}  & \cell{*}   \\ \hline
\cell{833} & \cell{432,218}  & \cell{47.86} & \cell{203.95} & *     & \cell{8.007e-08} & \cell{9.999e-08} & * & \cell{-2124.3265} & \cell{-2124.3265} & \cell{*}   \\ \hline
\cell{1,383} &\cell{1,194,500}  & \cell{206.88} & \cell{571.17} & *     & \cell{9.473e-08} & \cell{9.983e-08} & * & \cell{-3236.1724} & \cell{-3236.1724} & \cell{*}   \\ \hline
\cell{1,619} & \cell{2,389,000}  & \cell{556.34} & \cell{1297.86} &  *     & \cell{4.422e-09} & \cell{9.994e-08} & * & \cell{-6474.3812} & \cell{-6474.3812} & \cell{*}   \\ \hline
\end{tabular}}
\vspace{-2ex}
\end{table}

From Table \ref{table:result_NUM}, we observe the following facts:
\begin{itemize}
\vspace{-1.25ex}
\item \texttt{IPLD} can solve large-scale problems with huge variables and moderate number of couple linear inequality constraints relatively fast and accurate.
\texttt{IPLD} outperforms SDPT3 and CP in a majority of problems in terms of CPU time and achieves the same accuracy in the objective value and constraint violation.

\item It is not surprising that CP can also achieve high accuracy but requires very large number of iterations.
The CP algorithm requires from $6500$ to $15200$ iterations to achieve our specified accuracy depending on problem.

\item SDPT3 is quickly prohibited to handle larger instances due to the increase of  variables and constraints when transforming it into a conic and $\log$ form.
Therefore, the problem cannot be fit into our computer memory.
\vspace{-1.25ex}
\end{itemize}
In summary, we believe that our method, \texttt{IPLD}, can potentially solve large-scale convex problems of the form \eqref{eq:constr_cvx} as long as they satisfy Assumptions~\ref{as:A0} and \ref{as:A1}.
It can often achieve high accuracy within reasonably computational effort and can be easily parallelized. 
While primal-dual first-order methods require to tune the step-size to obtain good performance, our method is relatively robust to inexact oracles and inexact Newton-type methods as well as the choice of parameter $t_0 \in (0, 1]$.

\beforesubsec
\subsection{\bf Spectrum management of multi-user DSL networks}\label{subsec:network_exam}
\aftersubsec
We consider the spectrum management problem of multi-user DSL networks studied in \cite{Tsiaflakis2008}, which can be cast into the following constrained problem:
\vspace{-0.75ex}
\begin{equation}\label{eq:DSL_exam1}
\left\{\begin{array}{cl}
\displaystyle\min_{x\in\R^m} & \Big\{ g(x) := -\sum_{i=1}^{M}\big[ a_i^{\top}x_i - c_i^{\top}\ln (H_ix_i + g_i)\big] \Big\}  \vspace{1ex}\\ 
\mathrm{s.t.}~ &  \sum_{i=1}^Mx_i \leq b, \vspace{1ex}\\ 
& 0 \leq x_i \leq L, ~i=1,\cdots, M.
\end{array}\right.
\vspace{-0.75ex}
\end{equation}
where $x_i \in \R^{m}$, ~$a_i \in \R^{m}$, $c_i \in \R^{m}_{+}$, $b \in \R^m$, $L \in \R^m_{++}$, $g_i \in \R^{m}$,  and $H_i \in \R^{m\times m}$.
Here, $m$ is the number of users, and $M$ is the number of channels.
For the detail explanation of this model, we refer the reader to \cite{Tsiaflakis2008}.
Clearly, \eqref{eq:DSL_exam1} can be cast into \eqref{eq:constr_cvx}, where $g$ is self-concordant, $Ax =  \sum_{i=1}^Mx_i$, $\phi := \delta_{(-\infty, b]}$ the indicator of $(-\infty, b]$, and $\Kc := [0, L]^M$.

Our goal in this example is to verify the performance of Algorithm~\ref{alg:A1} using different accuracy levels both for inexact oracles and inexact proximal-Newton method.
For this purpose, we use two real datasets  to test our algorithm.
More precisely,  we first fix the tolerance $\delta_k$ of the inexact oracles at $10^{-5}$ and change the tolerance $\epsilon_k$ of the inexact proximal-Newton method from $10^{-2}$ to $10^{-11}$. Then, we fix the tolerance $\epsilon_k$ at $10^{-5}$ in the inexact proximal-Newton scheme  and vary  $\delta_k$ in the inexact oracles between $10^{-2}$ and $10^{-8}$. 
In all these cases, we terminate our algorithm whenever the feasibility violation is below $10^{-5}$  and the relative gap is below $10^{-6}$.

In the first test, we use  a $7$-user asymmetric ADSL downstream dataset, where $m = 7$ and $M = 224$. 
Figure~\ref{fig:DSL7} shows how the number of iterations and the normalized CPU time depend on the tolerances, where the normalized CPU time is computed by $(T - T_{\min})/(T_{\max} - T_{\min})$ with the time $T$.

\begin{figure}[hpt!]
\begin{center}
\includegraphics[width=1\textwidth]{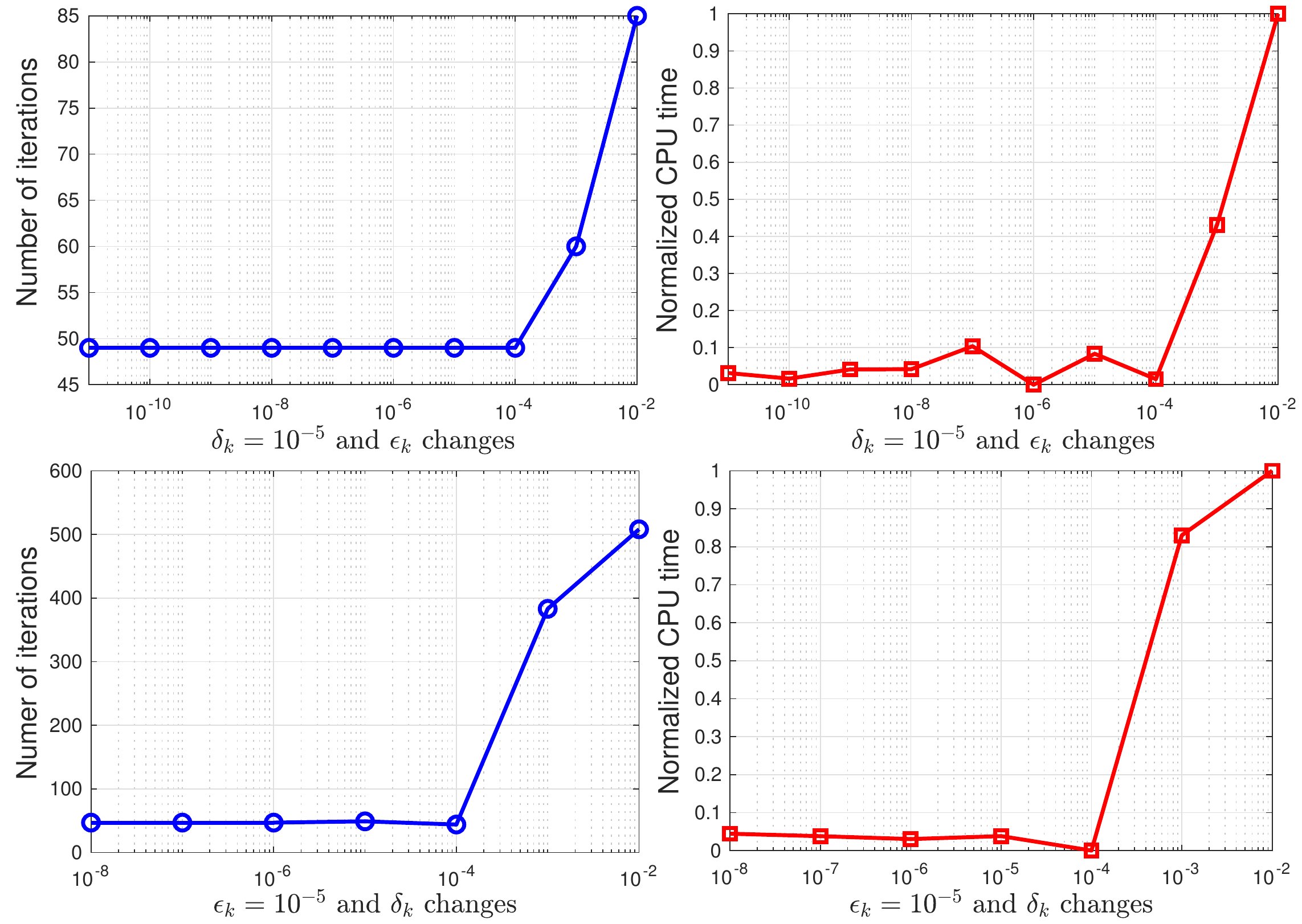}
\vspace{-1.5ex}
\caption{The number of iterations and normalized CPU time of Algorithm~\ref{alg:A1} on the $7$-users dataset.
The first row shows the number of iterations and normalized CPU time when $\delta_k$ is fixed at $10^{-5}$ and $\epsilon_k$ changes from $10^{-12}$ to $10^{-2}$, while the second row is for the case $\epsilon_k = 10^{-5}$ and $\delta_k$ changes from $10^{-8}$ to $10^{-2}$.}
\label{fig:DSL7}
\end{center}  
\vspace{-6ex}
\end{figure}

We can see from the top row of Figure~\ref{fig:DSL7} that with $\delta_k = 10^{-5}$ fixed and $\epsilon_k \leq 10^{-4}$, the number of iterations is almost stable and the computational time does not decrease significantly.
This suggests that the accuracy $\epsilon_k = 10^{-4}$ is sufficiently for computing proximal-Newton direction in the dual problem.
If $\epsilon_k > 10^{-4}$, then the number of iterations and CPU time increase significantly.
Similarly, if we fix $\epsilon_k = 10^{-5}$ and increase $\delta_k$ from $10^{-8}$ to $10^{-2}$, then we can observe from the bottom row of  Figure~\ref{fig:DSL7} that $\delta_k \leq 10^{-4}$ is sufficient to accommodate the inexact oracles. 

To confirm our above statement, we again test our algorithm with the second dataset, 12-user VDSL upstream dataset,  where $n = 12$ and $M = 1147$.
Figure~\ref{fig:DSL12} provides the number of iterations and normalized CPU time by rescaling it between $[0, 1]$ as in Figure~\ref{fig:DSL7}.
We again observe very similar behavior in both situations, but since the problem is relatively larger than that of the first dataset, the computational time increases significantly when we decrease the accuracy $\delta_k$ of the inexact oracles.  

\begin{figure}[!htp]
\begin{center}
\includegraphics[width=1\textwidth]{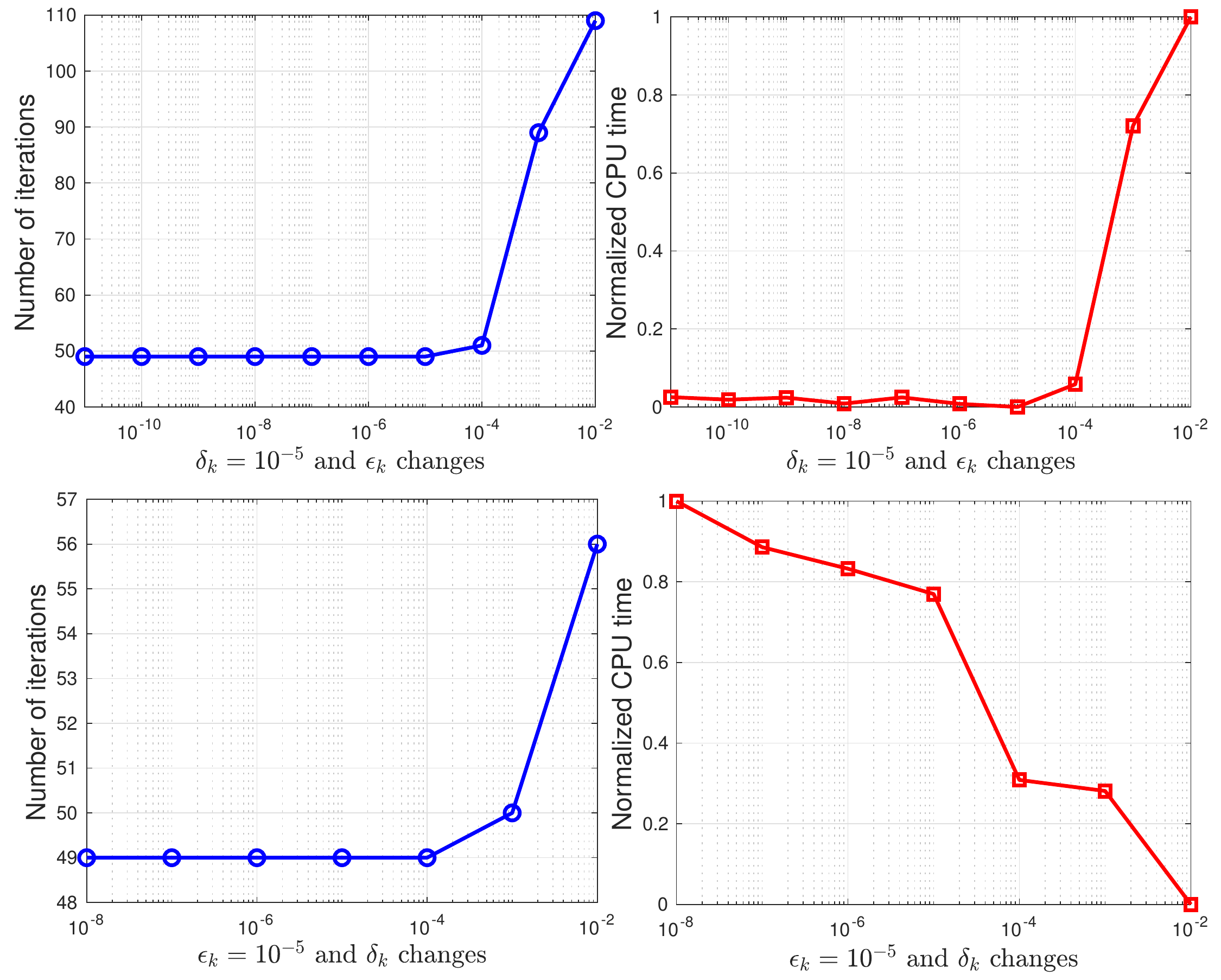}
\vspace{-1.5ex}
\caption{The number of iterations and normalized CPU time of Algorithm~\ref{alg:A1} on the $12$-users dataset.
The first row shows the number of iterations and normalized CPU time when $\delta_k$ is fixed at $10^{-5}$ and $\epsilon_k$ changes from $10^{-12}$ to $10^{-2}$, while the second row is for the case $\epsilon_k = 10^{-5}$ and $\delta_k$ changes from $10^{-8}$ to $10^{-2}$.}
\label{fig:DSL12}
\end{center}  
\vspace{-6ex}
\end{figure}

\beforepara
\begin{acknowledgements}
This work was partly supported by the National Science Foundation (NSF), awarded number: DMS-1619884. 
\end{acknowledgements}

\appendix  
\beforesec
\section{Appendix:  The proof of technical results in the main text}
\aftersec
We provide all the missing proofs in the main text.

\beforesubsec
\subsection{\bf The proof of Proposition~\ref{pro:inexact_oracle}: Properties of inexact oracles.}\label{apdx:pro:inexact_oracle}
\aftersubsec
Since $x^{\ast}_t(y)$ is the exact solution of \eqref{eq:opt_cond_subprob}, we have 
\begin{equation*}
\nabla{\psi_t}(x^{\ast}_t(y); y) \equiv \tfrac{M_t^2}{4}\big[\nabla{g}(x^{\ast}_t(y)) + t\nabla{f}(x^{\ast}_t(y)) - A^{\top}y\big] = 0.
\end{equation*}
Therefore, using the standard self-concordance of $\psi_t$, we can show that
\begin{equation*}
\begin{array}{ll}
\iprods{\nabla{\psi_t}(\widetilde{x}^{\ast}_t(y); y), \widetilde{x}^{\ast}_t(y)-x^{\ast}_t(y)} &= \iprods{ \nabla{\psi_t}(\widetilde{x}^{\ast}_t(y); y) - \nabla{\psi_t}(x^{\ast}_t(y); y), \widetilde{x}^{\ast}_t(y) - x^{\ast}_t(y)}\vspace{1ex}\\
&\geq \frac{\abss{\widetilde{x}^{\ast}_t(y) - x^{\ast}_t(y)}_{\widetilde{x}^{\ast}_t(y), t}^2}{1 {~} + {~} \abss{\widetilde{x}^{\ast}_t(y) - x^{\ast}_t(y)}_{\widetilde{x}^{\ast}_t(y), t}}.
\end{array}
\end{equation*}
By the Cauchy-Schwarz inequality, we have
\begin{equation*}
\iprods{\nabla{\psi_t}(\widetilde{x}^{\ast}_t(y); y), \widetilde{x}^{\ast}_t(y)-x^{\ast}_t(y)} \leq \abss{\nabla{\psi_t}(\widetilde{x}^{\ast}_t(y); y)}_{\widetilde{x}^{\ast}_t(y), t}^{\ast} \abss{\widetilde{x}^{\ast}_t(y)-x^{\ast}_t(y)}_{\widetilde{x}^{\ast}_t(y), t}.
\end{equation*}
Combining the last two inequalities, we eventually get
\begin{equation*}
\frac{\abss{\widetilde{x}^{\ast}_t(y) - x^{\ast}_t(y)}_{\widetilde{x}^{\ast}_t(y), t}}{1 + \abss{\widetilde{x}^{\ast}_t(y) - x^{\ast}_t(y)}_{\widetilde{x}^{\ast}_t(y), t}} \leq \abss{\nabla{\psi_t}(\widetilde{x}^{\ast}_t(y); y)}_{\widetilde{x}^{\ast}_t(y), t}^{\ast} \leq \frac{\delta}{1 + \delta}.
\end{equation*}
This implies  that $\abss{\widetilde{x}^{\ast}_t(y)-x^{\ast}_t(y)}_{\widetilde{x}^{\ast}_t(y), t} \leq \delta$.

Next, using \eqref{eq:dt_and_derivatives} and \eqref{eq:inexact_oracle}, we have $d_t(y) - \widetilde{d}_t(y) = \psi_t(\widetilde{x}^{\ast}_t(y); y) - \psi_t(x^{\ast}_t(y); y)$.
Therefore, applying \cite[Theorems 4.1.7 and 4.1.8]{Nesterov2004} respectively, we obtain the first estimate of \eqref{eq:inexact_oracle_properties}.

Note that since $\nabla^2{d_t}(y) = \frac{M_t^4}{16}A\nabla^2{\psi_t}(x^{\ast}_t(y); y)^{-1}A^{\top}$ and $\widetilde{\nabla}^2{d_t}(y) = \frac{M_t^4}{16}A\nabla^2{\psi_t}(\widetilde{x}^{\ast}_t(y))^{-1}A^{\top}$, using \cite[Theoryem 4.1.6]{Nesterov2004}, we obtain the second estimate of \eqref{eq:inexact_oracle_properties}.

Finally, since $\nabla{d}_t(y) - \widetilde{\nabla}{d_t}(y) = \tfrac{M_t^2}{4}A(x^{\ast}_t(y) - \widetilde{x}^{\ast}_t(y))$, we have 
\begin{equation*}
\begin{array}{l}
\big[\tnorms{\nabla{d}_t(y) - \widetilde{\nabla}{d_t}(y)}_{y,t}^{\ast}\big]^2 \vspace{1ex}\\
= \frac{M_t^4}{16}(x^{\ast}_t(y) - \widetilde{x}^{\ast}_t(y))A^{\top}\big(\frac{M_t^4}{16}A\nabla^2{\psi_t}(\widetilde{x}^{\ast}_t(y); y)^{-1}A^{\top}\big)^{-1}A(x^{\ast}_t(y) - \widetilde{x}^{\ast}_t(y)) \vspace{1ex}\\
 \leq \frac{M_t^4}{16}(x^{\ast}_t(y) - \widetilde{x}^{\ast}_t(y))^{\top}\frac{16}{M_t^4}\nabla^2{\psi_t}(\widetilde{x}^{\ast}_t(y); y)(x^{\ast}_t(y) - \widetilde{x}^{\ast}_t(y)) \vspace{1ex}\\
 = \abss{x^{\ast}_t(y) - \widetilde{x}^{\ast}_t(y)}_{\widetilde{x}^{\ast}_t(y), t}^2.
\end{array}
\end{equation*}
In the last inequality, we use $A^{\top}(AQ^{-1}A^{\top})^{-1}A \preceq Q$ for any symmetric positive definite matrix $Q$ and any full-row rank matrix $A$.
Hence, we obtain the third estimate of \eqref{eq:inexact_oracle_properties}.
\Eproof

\beforesubsec
\subsection{\bf The technical proofs of Subsection~\ref{subsec:one_iter_analysis}: Convergence analysis}\label{apdx:subsec:one_iter_analysis}
\aftersubsec
We provide the proof of technical results in Subsection~\ref{subsec:one_iter_analysis}.

\beforesubsubsec
\subsubsection{The proof of Lemma~\ref{le:key_property0}: Key estimate of the inexact PN scheme \eqref{eq:inexact_dual_newton_scheme2}.}\label{apdx:le:key_property0}
\aftersubsubsec
For simplicity of presentation, we redefine $t:= t_k$, $t_{+}:= t_{k+1}$, $y:= y^k$, $y_{+}:= y^{k+1}$, and $\bar{y}_{+}:=\bar{y}^{k+1}$, where $\bar{y}^{k+1}$ is defined by \eqref{eq:dual_newton_scheme} or \eqref{eq:dual_newton_scheme2}.
Using these new notations, we also denote  $\delta := \delta_{t_{+}}(y)$, $\delta_{+} := \delta_{t_{+}}(y_{+})$, $\epsilon := \Vert\!\vert y_{+} - \bar{y}_{+} \Vert\!\vert_{y,t_{+}}$, and $\hat{\lambda} := \tnorms{y_{+} - y}_{y, t_{+}}$ to make our analysis more clean.

If we define $r_{t_{+}}(y) := \widetilde{\nabla}^2{d_{t_{+}}}(y)(\bar{y}_{+} - y)  + \widetilde{\nabla}{d_{t_{+}}}(y)$, then from \eqref{eq:dual_newton_scheme}, we have
\begin{equation*}
-r_{t_{+}}(y) \in \partial{h_{t_{+}}}(\bar{y}_{+}),
\end{equation*}
which is equivalent to
\begin{equation*}
\bar{y}_{+} - \widetilde{\nabla}^2{d_{t_{+}}}(y_{+})^{-1}r_{t_{+}}(y) \in \bar{y}_{+} +  \widetilde{\nabla}^2{d_{t_{+}}}(y_{+})^{-1}\partial{h_{t_{+}}}(\bar{y}_{+}).
\end{equation*}
Utilizing the scaled proximal operator defined by \eqref{eq:dual_newton_scheme2}, we can write the last statement as
\begin{equation}\label{eq:lm1_est1}
\bar{y}_{+} = \prox_{h_{t_{+}}}^{\widetilde{\nabla}^2{d_{t_{+}}}(y_{+})}\Big(\bar{y}_{+} - \widetilde{\nabla}^2{d_{t_{+}}}(y_{+})^{-1}r_{t_{+}}(y)\Big).
\end{equation}
Using the definition of $\widetilde{G}_t(y)$ in \eqref{eq:gradient_mapping} and of $\lambda_t(y)$ in \eqref{eq:inexact_NT_decrement}, we can derive
\begin{equation*}
\begin{array}{ll}
\lambda_{t_{+}}(y_{+}) &= \tnorms{\widetilde{G}_{t_{+}}(y_{+})}_{y_{+}, t_{+}}^{\ast} \vspace{1ex}\\
&= \Big\Vert\!\Big\vert \widetilde{\nabla}^2{d_{t_{+}}}(y_{+})\Big[y_{+} - \prox_{h_{t_{+}}}^{\widetilde{\nabla}^2{d_{t_{+}}}(y_{+})}\Big(y_{+} - \widetilde{\nabla}^2{d_{t_{+}}}(y_{+})^{-1}\widetilde{\nabla}d_{t_{+}}(y_{+})\Big)\Big]\Big\Vert\!\Big\vert_{y_{+}, t_{+}}^{\ast} \vspace{1ex}\\
&= \Big\Vert\!\Big\vert y_{+} -   \prox_{h_{t_{+}}}^{\widetilde{\nabla}^2{d_{t_{+}}}(y_{+})}\Big(y_{+} - \widetilde{\nabla}^2{d_{t_{+}}}(y_{+})^{-1}\widetilde{\nabla}d_{t_{+}}(y_{+})\Big)\Big\Vert\!\Big\vert_{y_{+}, t_{+}} \vspace{1ex}\\
&\leq \tnorms{y_{+} - \bar{y}_{+}}_{y_{+}, t_{+}} +  \Big\Vert\!\Big\vert \bar{y}_{+} -   \prox_{h_{t_{+}}}^{\widetilde{\nabla}^2{d_{t_{+}}}(y_{+})}\Big(y_{+} - \widetilde{\nabla}^2{d_{t_{+}}}(y_{+})^{-1}\widetilde{\nabla}d_{t_{+}}(y_{+})\Big)\Big\Vert\!\Big\vert_{y_{+},t_{+}} \vspace{1ex}\\
&\overset{\tiny \tiny\eqref{eq:lm1_est1}}{=} \tnorms{y_{+} - \bar{y}_{+}}_{y_{+},t_{+}} + \Big\Vert\!\Big\vert  \prox_{h_{t_{+}}}^{\widetilde{\nabla}^2{d_{t_{+}}}(y_{+})}\Big(\bar{y}_{+} - \widetilde{\nabla}^2{d_{t_{+}}}(y_{+})^{-1}r_{t_{+}}(y)\Big) \vspace{1ex}\\
& {~~~~~~~~~~~~~~~~~~~~~~~~~~~~~~} -   \prox_{h_{t_{+}}}^{\widetilde{\nabla}^2{d_{t_{+}}}(y_{+})}\Big(y_{+} - \widetilde{\nabla}^2{d_{t_{+}}}(y_{+})^{-1}\widetilde{\nabla}d_{t_{+}}(y_{+})\Big)\Big\Vert\!\Big\vert_{y_{+}, t_{+}}.
\end{array}
\end{equation*}
By the  non-expansiveness of $\prox_{h_{t_{+}}}^{\widetilde{\nabla}^2{d_{t_{+}}}}(\cdot)$, see \cite{Tran-Dinh2013a}, we can further estimate this term as
\begin{equation}\label{eq:lm1_est2}
{\!\!\!\!}\begin{array}{ll}
\lambda_{t_{+}}(y_{+}) &\leq  \tnorms{y_{+} - \bar{y}_{+}}_{y_{+},t_{+}}  +  \Big\Vert\!\Big\vert \bar{y}_{+} - y_{+} + \widetilde{\nabla}^2{d_{t_{+}}}(y_{+})^{-1}\big( \widetilde{\nabla}d_{t_{+}}(y_{+}) - r_{t_{+}}(y)\big) \Big\Vert\!\Big\vert_{y_{+},t_{+}} \vspace{1ex}\\
&\leq 2\tnorms{y_{+} - \bar{y}_{+}}_{y_{+},t_{+}}  + \tnorm{\widetilde{\nabla}d_{t_{+}}(y_{+}) -  \widetilde{\nabla}{d_{t_{+}}}(y) - \widetilde{\nabla}^2{d_{t_{+}}}(y)(\bar{y}_{+} - y)}_{y_{+},t_{+}}^{\ast}.
\end{array}{\!\!\!\!}
\end{equation}
Next, we decompose the following term $R_{t_{+}}(y)$ as 
\begin{equation}\label{eq:lm1_est3}
\begin{array}{ll}
R_{t_{+}}(y) &:= \widetilde{\nabla}d_{t_{+}}(y_{+}) -  \widetilde{\nabla}{d_{t_{+}}}(y) - \widetilde{\nabla}^2{d_{t_{+}}}(y)(\bar{y}_{+} - y) 
\vspace{1ex}\\
&= \big[\widetilde{\nabla}d_{t_{+}}(y_{+}) - \nabla{d}_{t_{+}}(y_{+})\big] - \big[ \widetilde{\nabla}{d_{t_{+}}}(y) - {\nabla}{d_{t_{+}}}(y) \big]\vspace{1ex}\\
&- {~} \big[\widetilde{\nabla}^2{d_{t_{+}}}(y) - \nabla^2{d_{t_{+}}}(y)\big](y_{+} - y) - \widetilde{\nabla}^2{d_{t_{+}}}(y)(\bar{y}_{+} - y_{+}) \vspace{1ex}\\
&+ {~} \big[ \nabla{d}_{t_{+}}(y_{+}) - {\nabla}{d_{t_{+}}}(y) - \nabla^2{d_{t_{+}}}(y)(y_{+} - y) \big].
\end{array}
\end{equation}
Before we estimate the five terms of $R_{t_{+}}(y)$, we recall the following inequalities, which will be repeatedly used in our proof.
\begin{equation}\label{eq:lm1_ineq1}
\frac{1}{1-\norms{y_{+} - y}_{y,t_{+}}} \overset{\tiny \tiny\eqref{eq:inexact_oracle_properties}}{\leq}  \frac{1}{1-\frac{1}{1-\delta_{t_{+}}(y)}\tnorms{y_{+} - y}_{y,t_{+}}}= \frac{1-\delta}{1- \delta - \hat{\lambda}}. 
\end{equation}
\begin{equation}\label{eq:lm1_ineq2}
\begin{array}{ll}
\tnorms{\cdot}_{y_{+},t_{+}}^{\ast} \overset{\tiny \tiny\eqref{eq:inexact_oracle_properties}}{\leq} \frac{\norms{\cdot}_{y_{+},t_{+}}^{\ast}}{1-\delta_{t_{+}}(y_{+})} \leq \frac{\norms{\cdot}_{y,t_{+}}^{\ast}}{\big(1-\delta_{t_{+}}(y_{+})\big)\big(1-\norms{y_{+} - y}_{y,t_{+}}\big)} \overset{\tiny \tiny\eqref{eq:lm1_ineq1}}{\leq} \frac{(1-\delta)\norms{\cdot}_{y,t_{+}}^{\ast}}{(1-\delta_{+})(1- \delta - \hat{\lambda})}. 
\end{array}
\end{equation}
Here, the second last inequality of \eqref{eq:lm1_ineq2} is from \cite[Theoryem 4.1.6]{Nesterov2004}. 
Note that \eqref{eq:lm1_ineq2} also holds for $\tnorms{\cdot}_{y_{+},t_{+}}$ and $\norms{\cdot}_{y,t_{+}}$.

Using  \eqref{eq:lm1_ineq2}, we have
\begin{equation}\label{eq:lm1_ineq3}
\tnorms{\cdot}_{y_{+},t_{+}}^{\ast} \overset{\tiny \tiny\eqref{eq:lm1_ineq2}}{\leq} \frac{(1-\delta)\norms{\cdot}_{y,t_{+}}^{\ast}}{(1-\delta_{+})(1- \delta - \hat{\lambda})} \overset{\tiny \tiny\eqref{eq:inexact_oracle_properties}}{\leq} \frac{\tnorms{\cdot}_{y,t_{+}}^{\ast}}{(1-\delta_{+})(1- \delta - \hat{\lambda})}. 
\end{equation}
Note that \eqref{eq:lm1_ineq3} also holds for $\tnorms{\cdot}_{y_{+},t_{+}}$ and $\tnorms{\cdot}_{y,t_{+}}$.

Now, we estimate the first term in $R_{t_{+}}(y)$ of \eqref{eq:lm1_est3} as
\begin{equation}\label{eq:lm1_estR1}
\tnorms{\widetilde{\nabla}d_{t_{+}}(y_{+}) - \nabla{d}_{t_{+}}(y_{+})}_{y_{+},t_{+}}^{\ast} \overset{\tiny \tiny\eqref{eq:inexact_oracle_properties}}{\leq} \delta_{t_{+}}(y_{+}) = \delta_{+}. 
\end{equation}
For the second term of  \eqref{eq:lm1_est3}, we have
\begin{equation}\label{eq:lm1_estR2}
\begin{array}{ll}
\tnorms{\widetilde{\nabla}d_{t_{+}}(y) - \nabla{d}_{t_{+}}(y)}_{y_{+},t_{+}}^{\ast} &\overset{\tiny \tiny\eqref{eq:lm1_ineq3}}{\leq} \frac{1}{(1-\delta_{+})(1- \delta - \hat{\lambda})} \tnorms{\widetilde{\nabla}d_{t_{+}}(y) - \nabla{d}_{t_{+}}(y)}_{y,t_{+}}^{\ast}  \vspace{1ex}\\
&\overset{\tiny \tiny\eqref{eq:inexact_oracle_properties}}{\leq} \frac{\delta_{t_{+}}(y)}{(1-\delta_{+})(1- \delta - \hat{\lambda})} \vspace{1ex}\\
&= \frac{\delta}{(1 - \delta_{+})(1-\delta - \hat{\lambda})}.
\end{array}
\end{equation}
To estimate the third term of  \eqref{eq:lm1_est3}, let $S(y) := \big[\widetilde{\nabla}^2{d_{t_{+}}}(y) - \nabla^2{d_{t_{+}}}(y)\big](y_{+} - y)$.
We have
\begin{equation}\label{eq:lm1_estR3a}
\begin{array}{l}
\tnorms{S(y)}_{y_{+},t_{+}}^{\ast} \overset{\tiny \tiny\eqref{eq:lm1_ineq2}}{\leq}  \frac{(1-\delta)\norms{S(y)}_{y,t_{+}}^{\ast}}{(1 - \delta_{+})(1 -  \delta - \hat{\lambda})}
\end{array}
\end{equation}
However, $\norms{S(y)}_{y,t_{+}}^{\ast}$ can be estimated as
\begin{equation*}
{\!\!\!}\begin{array}{ll}
\left[\norms{S(y)}^{\ast}_{y, t_{+}}\right]^2 &=(y_{+} - y)^{\top}[\widetilde{\nabla}^2{d_{t_{+}}}(y) - \nabla^2{d_{t_{+}}}(y)\big]
\nabla^2{d_{t_{+}}}(y)^{-1}
[\widetilde{\nabla}^2{d_{t_{+}}}(y) - \nabla^2{d_{t_{+}}}(y)\big](y_{+} - y)\\
&= (y_{+} {\!} - y)^{\top}\widetilde{\nabla}^2{d_{t_{+}}}(y)\nabla^2{d_{t_{+}}}(y)^{-1}\widetilde{\nabla}^2{d_{t_{+}}}(y)(y_{+} {\!} - y)\vspace{1ex}\\
&{~~} - {~} 2(y_{+} - y)^{\top}\widetilde{\nabla}^2{d_{t_{+}}}(y)(y_{+} - y) + (y_{+} - y)^{\top}\nabla^2{d_{t_{+}}}(y)(y_{+} - y)\vspace{1ex}\\
&\overset{\tiny \eqref{eq:inexact_oracle_properties}}{\leq} \left[\frac{2}{(1-\delta_{t_{+}}(y))^2} - 2\right](y_{+} - y)^{\top}\widetilde{\nabla}^2{d_{t_{+}}}(y)(y_{+} - y)\vspace{1ex}\\
&= \frac{4\delta-2\delta^2}{(1-\delta)^2}\tnorms{y_{+} - y}_{y,t_{+}}^2 = \frac{4\delta-2\delta^2}{(1-\delta)^2}\hat{\lambda}^2
\end{array}{\!\!\!\!}
\end{equation*}
Using this estimate into \eqref{eq:lm1_estR3a}, we finally get
\begin{equation}\label{eq:lm1_estR3b}
\begin{array}{ll}
\tnorms{\big[\widetilde{\nabla}^2{d_{t_{+}}}(y) - \nabla^2{d_{t_{+}}}(y)\big](y_{+} - y)}_{y_{+}, t_{+}}^{\ast} &\leq \frac{1-\delta}{(1 - \delta_{+})(1 - \hat{\lambda} - \delta)}\frac{\sqrt{4\delta-2\delta^2}}{(1-\delta)}\hat{\lambda}\vspace{1ex}\\ & =\frac{\sqrt{4\delta-2\delta^2}\hat{\lambda}}{(1 - \delta_{+})(1 - \hat{\lambda} - \delta)}.
\end{array}
\end{equation}
For the fourth term $\widetilde{\nabla}^2{d_{t_{+}}}(y)(\bar{y}_{+} - y_{+})$ of  \eqref{eq:lm1_est3}, we have
\begin{equation}\label{eq:lm1_estR4}
\begin{array}{ll}
\tnorms{\widetilde{\nabla}^2{d_{t_{+}}}(y)(\bar{y}_{+} - y_{+})}_{y_{+}, t_{+}}^{\ast} &\overset{\tiny \tiny\eqref{eq:lm1_ineq3}}{\leq} \frac{1}{(1 -\delta_{+})(1- \hat{\lambda} - \delta)}\tnorm{\widetilde{\nabla}^2{d_{t_{+}}}(y)(\bar{y}_{+} - y_{+})}_{y, t_{+}}^{\ast}\vspace{1ex}\\
 &= \frac{1}{(1 -\delta_{+})(1 - \hat{\lambda} - \delta)}\tnorm{\bar{y}_{+} - y_{+}}_{y, t_{+}}\vspace{1ex}\\
 &= \frac{\epsilon}{(1 -\delta_{+})(1- \hat{\lambda} - \delta)}.
\end{array}
\end{equation}
Finally, we estimate last terms $\Tc_5 := \tnorms{\nabla{d}_{t_{+}}(y_{+}) - {\nabla}{d_{t_{+}}}(y) - \nabla^2{d_{t_{+}}}(y)(y_{+} - y)}_{y_{+},t_{+}}^{\ast}$.
Note that
\begin{equation}\label{eq:lm1_estR5}
\begin{array}{ll}
\Tc_5 &\overset{\tiny \tiny\eqref{eq:lm1_ineq2}}{\leq} \frac{1-\delta}{(1-\delta_{+})(1-\delta-\hat{\lambda})}\left\Vert \big[ \nabla{d}_{t_{+}}(y_{+}) - {\nabla}{d_{t_{+}}}(y) - \nabla^2{d_{t_{+}}}(y)(y_{+} - y)\right\Vert_{y,t_{+}}^{\ast}\vspace{1ex}\\
&\leq \frac{1-\delta}{(1 - \delta_{+})(1 - \delta_{+} - \hat{\lambda})}\left( \frac{\norm{y_{+} - y}_{y, t_{+}}^2}{1 - \norm{y_{+} - y}_{y,t_{+}}}\right) \vspace{1ex}\\
&\overset{\tiny \tiny\eqref{eq:lm1_ineq1}}{\leq} \frac{(1 - \delta)^2}{(1 - \delta_{+})(1 - \delta - \hat{\lambda})^2}\norm{y_{+} - y}_{y, t_{+}}^2 \vspace{1ex}\\
&\overset{\tiny \tiny\eqref{eq:inexact_oracle_properties}}{\leq} \frac{\hat{\lambda}^2}{(1 - \delta_{+})(1-\delta - \hat{\lambda})^2},
\end{array}
\end{equation}
where the second inequality follows from \cite[Theorem 1]{TranDinh2016c}.

Plugging \eqref{eq:lm1_estR1}, \eqref{eq:lm1_estR2}, \eqref{eq:lm1_estR3b}, \eqref{eq:lm1_estR4}, and \eqref{eq:lm1_estR5} into \eqref{eq:lm1_est3}, we can estimate
\begin{equation}\label{eq:lm1_est4}
\begin{array}{ll}
\tnorms{R_{t_{+}}(y)}_{y_{+}, t_{+}}^{\ast}
&\leq \delta_{+}  + \frac{\delta}{(1 - \delta_{+})(1- \hat{\lambda} - \delta)} + \frac{\sqrt{4\delta - 2\delta^2}\hat{\lambda}}{(1 - \delta_{+})(1 - \hat{\lambda} - \delta)} \vspace{1ex}\\
&+ {~} \frac{\epsilon}{(1 -\delta_{+})(1 - \hat{\lambda} - \delta)}	 + \frac{\hat{\lambda}^2}{(1 - \delta_{+})(1-\delta- \hat{\lambda})^2}.
\end{array}
\end{equation}
Note that 
\begin{equation*}
\tnorms{y_{+} - \bar{y}_{+}}_{y_{+}, t_{+}} \overset{\tiny \tiny\eqref{eq:lm1_ineq3}}{\leq} \frac{\tnorm{y_{+} - \bar{y}_{+}}_{y, t_{+}}}{(1-\delta_{+})(1-\delta - \hat{\lambda})} = \frac{\epsilon}{(1-\delta_{+})(1-\delta- \hat{\lambda})}.
\end{equation*}
Substituting this estimate and \eqref{eq:lm1_est4} into \eqref{eq:lm1_est2}, we finally obtain
\begin{equation}\label{eq:lm1_est5}
\begin{array}{ll}
\lambda_{t_{+}}(y_{+}) &\leq \frac{3\epsilon}{(1-\delta_{+})(1-\delta - \hat{\lambda})}  + \delta_{+}   + \frac{\delta}{(1 - \delta_{+})(1 - \hat{\lambda} - \delta)} \vspace{1ex}\\
&+ {~} \frac{\sqrt{4\delta - 2\delta^2}\hat{\lambda}}{(1 - \delta_{+})(1 - \hat{\lambda} - \delta)}
+ \frac{\hat{\lambda}^2}{(1 - \delta_{+})(1-\delta - \hat{\lambda})^2}.
\end{array}
\end{equation}
However, from the definition of $\lambda_{t_{+}}(y)$, we have
\begin{equation*} 
\begin{array}{ll}
\lambda_{t_{+}}(y) &:=  \Big\Vert\!\Big\vert\widetilde{\nabla}^2{d_{t_{+}}}(y)\left( y - \text{prox}_{h_{t_{+}}}^{\widetilde{\nabla}^2{d_{t_{+}}}(y)}(y-\widetilde{\nabla}^2{d_{t_{+}}}(y)^{-1}\widetilde{\nabla}{d_{t_{+}}}(y)\right)\Big\Vert\!\Big\vert_{y, t_{+}}^{\ast} \vspace{1ex}\\
& = \tnorm{\bar{y}_{+} - y}_{y, t_{+}} \vspace{1ex}\\
&\geq \tnorms{y_{+} - y}_{y, t_{+}} - \tnorm{y_{+} - \bar{y}_{+}}_{y, t_{+}} \vspace{1ex}\\
& = \hat{\lambda} - \epsilon.
\end{array}
\end{equation*}
This implies $\hat{\lambda} := \tnorms{y_{+} - y}_{y,t_{+}}  \leq \lambda_{t_{+}}(y) {\,} + {\,} \epsilon$.
Substituting this estimate into \eqref{eq:lm1_est5}, we obtain \eqref{eq:key_est00}.
In particular, if $\delta = \delta_{+} = \epsilon = 0$, then we can simplify \eqref{eq:key_est00} to obtain \eqref{eq:key_est01}.
\Eproof

\beforesubsubsec
\subsubsection{The proof of Lemma~\ref{le:key_property1}: The relationship between $\lambda_{t_{+}}(y)$ and $\tilde{\Delta}$.}\label{apdx:le:key_property1}
\aftersubsubsec
We again redefine  $t:= t_k$, $t_{+}:= t_{k+1}$, $y:= y^k$, $y_{+}:= y^{k+1}$, and $\bar{y}_{+}:=\bar{y}^{k+1}$ as in Lemma \ref{le:key_property0}. 
In addition, we also define $\bar{u} := \prox_{h_t}^{\widetilde{\nabla}^2{d_{t}}(y)}\big( y - \widetilde{\nabla}^2{d_{t}}(y)^{-1}\widetilde{\nabla}{d_t}(y)\big)$.

First, we show that $\lambda_{t_{+}}(y)$ and $\lambda_t(y)$ can be respectively expressed as
\begin{equation}\label{eq:lm2_def}
\begin{array}{ll}
\lambda_{t_{+}}(y)&:=\Big\Vert\!\Big\vert \widetilde{\nabla}^2{d_{t_{+}}}(y)\left( y - \text{prox}_{h_{t_{+}}}^{\widetilde{\nabla}^2{d_{t_{+}}}(y)}(y-\widetilde{\nabla}^2{d_{t_{+}}}(y)^{-1}\widetilde{\nabla}{d_{t_{+}}}(y)\right) \Big\Vert\!\Big\vert_{y,t_{+}}^{\ast} \vspace{1ex}\\
& = \tnorms{\bar{y}_{+} - y}_{y,t_{+}}, \vspace{1ex}\vspace{1ex}\\
\lambda_{t}(y)&:=\Big\Vert\!\Big\vert \widetilde{\nabla}^2{d_{t}}(y)\left( y - \text{prox}_{h_t}^{\widetilde{\nabla}^2{d_{t}}(y)}(y-\widetilde{\nabla}^2{d_{t}}(y)^{-1}\widetilde{\nabla}{d_{t}}(y)\right) \Big\Vert\!\Big\vert_{y,t}^{\ast} \vspace{1ex}\\
& = \tnorms{\bar{u} - y}_{y,t}.
\end{array}
\end{equation}
If we denote by $\bar{h}(y) := \phi^{\ast}(-y)$, then $h_t(y) = \frac{M_t^2}{4}\bar{h}(y)$.
By the definition of $\bar{u}$ and $\bar{y}_{+}$, we can write 
\begin{equation*}
\left\{\begin{array}{ll}
-\frac{4}{M_t^2}\big[\widetilde{\nabla}^2{d_{t}}(y)(\bar{u} - y) + \widetilde{\nabla}{d_{t}}(y) \big] &\in \partial{\bar{h}}(\bar{u}),\vspace{1ex}\\
- \frac{4}{M_{t_{+}}^2}\big[\widetilde{\nabla}^2{d_{t_{+}}}(y)(\bar{y}_{+} - y) + \widetilde{\nabla}{d_{t_{+}}}(y) \big] &\in \partial{\bar{h}}(\bar{y}_{+}).
\end{array}\right.
\end{equation*}
Using the monotonicity of $\partial{\bar{h}}(\cdot)$, we can show that
\begin{equation*}
\iprod{\tfrac{4}{M_t^2}\widetilde{\nabla}^2{d_{t}}(y)(\bar{u} - y) - \tfrac{4}{M_{t_{+}}^2}\widetilde{\nabla}^2{d_{t_{+}}}(y)(\bar{y}_{+} - y) +  \tfrac{4}{M_{t}^2}\widetilde{\nabla}{d_{t}}(y)  -  \tfrac{4}{M_{t_{+}}^2}\widetilde{\nabla}{d_{t_{+}}}(y), \bar{y}_{+} - \bar{u}} \geq 0.
\end{equation*}
Rearranging this inequality, we obtain
\begin{equation*}
\begin{array}{ll}
&\iprod{\tfrac{4}{M_t^2}\widetilde{\nabla}^2{d_{t}}(y)(\bar{u} - y) - \tfrac{4}{M_{t_{+}}^2}\widetilde{\nabla}^2{d_{t_{+}}}(y)(\bar{u} - y) +  \tfrac{4}{M_{t}^2}\widetilde{\nabla}{d_{t}}(y)  -  \tfrac{4}{M_{t_{+}}^2}\widetilde{\nabla}{d_{t_{+}}}(y), \bar{y}_{+} - \bar{u}} \vspace{1ex}\\
& {~~~~~~}\geq \tfrac{4}{M_{t_{+}}^2}\tnorms{\bar{y}_{+} - \bar{u}}_{y, t_{+}}^2. 
\end{array}
\end{equation*}
By the Cauchy-Schwarz inequality, we can derive that
\begin{equation}\label{eq:lm2_est1}
\begin{array}{ll}
&\Big\Vert\!\Big\vert \overbrace{\tfrac{4}{M_t^2}\widetilde{\nabla}^2{d_{t}}(y)(\bar{u} - y) - \tfrac{4}{M_{t_{+}}^2}\widetilde{\nabla}^2{d_{t_{+}}}(y)(\bar{u} - y)}^{\Tc_1} +  \overbrace{\tfrac{4}{M_{t}^2}\widetilde{\nabla}{d_{t}}(y)  -  \tfrac{4}{M_{t_{+}}^2}\widetilde{\nabla}{d_{t_{+}}}(y)}^{\Tc_2}\Big\Vert\!\Big\vert_{y, t_{+}}^{\ast} \vspace{1ex}\\
&{~~~~~~~} \geq \tfrac{4}{M_{t_{+}}^2}\tnorms{\bar{y}_{+} - \bar{u}}_{y, t_{+}}.
\end{array}
\end{equation}
To estimate $\Tc_1$, we first show the relationship between $\nabla^2{\psi_t}(\widetilde{x}^{\ast}_t(y))$ and $\nabla^2{\psi_{t_{+}}}(\widetilde{x}^{\ast}_{t_{+}}(y))$. 
Then, we use it to get the relationship between $\widetilde{\nabla}^2{d_{t}}(y)$ and $\widetilde{\nabla}^2{d_{t_{+}}}(y)$. 
Recall that $\nabla^2{\psi_t}(x) := \frac{M_t^2}{4}\big[\nabla^2{g}(x) + t\nabla^2{f}(x)\big]$. 
Moreover,  if $t\in [0, 1]$, then $\frac{M_t^2}{4} := \max\set{1,\frac{1}{t}} = \frac{1}{t}$.
Therefore, we can write
\begin{equation*}
\nabla^2{\psi_{t}}(x) = \frac{1}{t}\nabla^2{g}(x)+\nabla^2{f}(x)~~\text{and}~~~\nabla^2{\psi_{t_{+}}}(x) = \frac{1}{t_{+}}\nabla^2{g}(x)+\nabla^2{f}(x).
\end{equation*}
For any $0 \leq t_{+} \leq t \leq 1$, we have
\begin{equation}\label{eq:lm2_est_psi1}
\nabla^2{\psi_{t}}(x) \preceq \nabla^2{\psi_{t_{+}}}(x) \preceq \frac{1}{t_{+}}\nabla^2{g}(x)+\frac{t}{t_{+}}\nabla^2{f}(x) = \frac{t}{t_{+}}\nabla^2{\psi_{t}}(x).
\end{equation}
In addition, using the self-concordance of $\psi_t$ and \eqref{eq:lm2_est_psi1}, we also have
\begin{equation}\label{eq:lm2_est_psi2}
\begin{array}{ll}
\nabla^2{\psi_t}(\widetilde{x}^{\ast}_t(y)) &\preceq \tfrac{1}{\big(1 {~} - {~} \abss{\widetilde{x}^{\ast}_{t_{+}}(y) - \widetilde{x}^{\ast}_t(y)}_{\widetilde{x}^{\ast}_t(y),t}\big)^2}\nabla^2{\psi_t}(\widetilde{x}^{\ast}_{t_{+}}(y)) \vspace{1ex}\\
& \overset{\tiny \tiny\eqref{eq:lm2_est_psi1}}{\preceq} \frac{1}{(1 - \tilde{\Delta})^2}\nabla^2{\psi_{t_{+}}}(\widetilde{x}^{\ast}_{t_{+}}(y)),\vspace{1ex}\\
\nabla^2{\psi_{t_{+}}}(\widetilde{x}^{\ast}_{t_{+}}(y)) & \overset{\tiny \tiny\eqref{eq:lm2_est_psi1}}{\preceq} \frac{t}{t_{+}}\nabla^2{\psi_{t}}(\widetilde{x}^{\ast}_{t_{+}}(y))  \preceq \tfrac{t}{t_{+}\big(1 {~}- {~} \abss{\widetilde{x}^{\ast}_{t_{+}}(y) - \widetilde{x}^{\ast}_t(y)}_{\widetilde{x}^{\ast}_{t}(y),t}\big)^2}\nabla^2{\psi_{t}}(\widetilde{x}^{\ast}_{t}(y)) \vspace{1ex}\\
& = \frac{t}{t_{+}(1-\tilde{\Delta})^2}{}\nabla^2{\psi_{t}}(\widetilde{x}^{\ast}_{t}(y)). 
\end{array}
\end{equation}
If we take the inverse of both sides of \eqref{eq:lm2_est_psi2}, then we get
\begin{equation*} 
\left\{\begin{array}{ll}
\nabla^2{\psi_{t_{+}}}(\widetilde{x}^{\ast}_{t_{+}}(y))^{-1} & \preceq \frac{1}{(1 - \tilde{\Delta})^2}\nabla^2{\psi_t}(\widetilde{x}^{\ast}_t(y))^{-1},\vspace{1.25ex}\\
\nabla^2{\psi_{t}}(\widetilde{x}^{\ast}_{t}(y))^{-1} & \preceq \frac{t}{t_{+}(1-\tilde{\Delta})^2}\nabla^2{\psi_{t_{+}}}(\widetilde{x}^{\ast}_{t_{+}}(y))^{-1}. 
\end{array}\right.
\end{equation*}
Since $\widetilde{\nabla}^2{d_{t}}(y) = \frac{M_t^4}{16}A\nabla^2{\psi_t}(\widetilde{x}^{\ast}_t(y))^{-1}A^{\top}$ and $\widetilde{\nabla}^2{d_{t_{+}}}(y) = \frac{M_{t_{+}}^4}{16}A\nabla^2{\psi_{t_{+}}}(\widetilde{x}^{\ast}_{t_{+}}(y))^{-1}A^{\top}$, the last inequalities imply
\begin{equation*}
\begin{array}{ll}
\frac{16}{M_{t_{+}}^4}\widetilde{\nabla}^2{d_{t_{+}}}(y)  \preceq \frac{16}{M_t^4(1 - \tilde{\Delta})^2} \widetilde{\nabla}^2{d_{t}}(y) ~~~\text{and}~~~\frac{16}{M_t^4}\widetilde{\nabla}^2{d_{t}}(y)  \preceq \frac{16t}{M_{t_{+}}^4t_{+}(1-\tilde{\Delta})^2}\widetilde{\nabla}^2{d_{t_{+}}}(y),
\end{array}
\end{equation*}
which are respectively equivalent to
\begin{equation*}
\begin{array}{ll}
\widetilde{\nabla}^2{d_{t_{+}}}(y)  \preceq \frac{M_{t_{+}}^4}{M_t^4(1 - \tilde{\Delta})^2} \widetilde{\nabla}^2{d_{t}}(y) ~~~~\text{and}~~~~\widetilde{\nabla}^2{d_{t}}(y)  \preceq \frac{M_t^4t}{M_{t_{+}}^4t_{+}(1-\tilde{\Delta})^2}\widetilde{\nabla}^2{d_{t_{+}}}(y) .
\end{array}
\end{equation*}
Since $\frac{M_t^2}{4}=\frac{1}{t}$ and $\frac{M_{t_{+}}^2}{4}=\frac{1}{t_{+}}$, we obtain from the above inequalities that 
\begin{equation}\label{eq:lm2_est_d}
\begin{array}{ll}
\widetilde{\nabla}^2{d_{t_{+}}}(y)  \preceq \frac{t^2}{t_{+}^2(1 - \tilde{\Delta})^2} \widetilde{\nabla}^2{d_{t}}(y)~~~~\text{and}~~~~\widetilde{\nabla}^2{d_{t}}(y)  \preceq \frac{t_{+}}{t(1-\tilde{\Delta})^2}\widetilde{\nabla}^2{d_{t_{+}}}(y) .
\end{array}
\end{equation}
Now we can estimate the first term $\Tc_1$ in \eqref{eq:lm2_est1} as
\begin{equation}\label{eq:lm2_est2}
{\!\!\!\!\!}\begin{array}{ll}
\big[\tnorms{\Tc_1}_{y,t_{+}}^{\ast}{\!}\big]^2 {\!\!}&= \Big[\big\Vert\!\big\vert \tfrac{4}{M_t^2}\widetilde{\nabla}^2{d_{t}}(y)(\bar{u} - y) - \tfrac{4}{M_{t_{+}}^2}\widetilde{\nabla}^2{d_{t_{+}}}(y)(\bar{u} - y)\big\Vert\!\big\vert_{y, t_{+}}^{\ast}\Big]^2\vspace{1ex}\\
&{\!\!\!} = \Big[\big\Vert\!\big\vert t\widetilde{\nabla}^2{d_{t}}(y)(\bar{u} - y) - t_{+}\widetilde{\nabla}^2{d_{t_{+}}}(y)(\bar{u} - y)\big\Vert\!\big\vert_{y, t_{+}}^{\ast}\Big]^2\vspace{1ex}\\
&{\!\!\!} = (\bar{u} \!-\! y)^{\top}{\!}\Big(\big[t\widetilde{\nabla}^2{d_{t}}(y) \!-\! t_{+}\widetilde{\nabla}^2{d_{t_{+}}}(y)\big]\widetilde{\nabla}^2{d_{t_{+}}}(y)^{-1}\big[t\widetilde{\nabla}^2{d_{t}}(y) \!-\! t_{+}\widetilde{\nabla}^2{d_{t_{+}}}(y)\big]\Big)(\bar{u} \!-\! y) \vspace{1ex}\\
&{\!\!\!} = (\bar{u} - y)^{\top}\Big(t^2\widetilde{\nabla}^2{d_{t}}(y)\widetilde{\nabla}^2{d_{t_{+}}}(y)^{-1}\widetilde{\nabla}^2{d_{t}}(y)-2tt_{+}\widetilde{\nabla}^2{d_{t}}(y)+t_{+}^2\widetilde{\nabla}^2{d_{t_{+}}}(y)\Big)(\bar{u} - y) \vspace{1ex}\\
&\overset{\tiny \eqref{eq:lm2_est_d}}{\leq}
(\bar{u} - y)^{\top}\Big(\frac{t_{+}t}{(1-\tilde{\Delta})^2}-2tt_{+}+\frac{t^2}{(1-\tilde{\Delta})^2}\widetilde{\nabla}^2{d_{t}}(y)\Big)(\bar{u} - y).\vspace{1ex}\\
&=\frac{t^2-2t_{+}t(1-\tilde{\Delta})^2+tt_{+}}{(1-\tilde{\Delta})^2}\tnorms{\bar{u} - y}_{y, t}^2.
\end{array}{\!\!\!}
\end{equation}
To estimate the second term $\Tc_2$ of \eqref{eq:lm2_est1}, by the definition of $\widetilde{\nabla}{d_t}$, we have
\begin{equation}\label{eq:lm2_est3}
\begin{array}{ll}
\big[\tnorms{\Tc_2}_{y, t_{+}}^{\ast}{\!}\big]^2 {\!}&=  \Big[\big\Vert\!\big\vert \tfrac{4}{M_{t}^2}\widetilde{\nabla}{d_{t}}(y)  -  \tfrac{4}{M_{t_{+}}^2}\widetilde{\nabla}{d_{t_{+}}}(y) \big\Vert\!\big\vert_{y,t_+}^{\ast}\Big]^2 \vspace{1ex}\\
& = \Big[\big\Vert\!\big\vert A\tilde{x}_{t}^{\ast}(y)  -  A\tilde{x}_{t_{+}}^{\ast}(y)\big\Vert\!\big\vert_{y, t_{+}}^{\ast}\Big]^2\vspace{1ex}\\
& = (\widetilde{x}^{\ast}_t(y) - \tilde{x}_{t_{+}}^{\ast}(y))^{\top}A^{\top}\widetilde{\nabla}^2{d_{t_{+}}}(y)^{-1}A(\widetilde{x}^{\ast}_t(y) - \tilde{x}_{t_{+}}^{\ast}(y)) \vspace{1ex}\\
&\overset{\tiny \tiny\eqref{eq:inexact_oracle}}{=} \frac{16}{M_{t_{+}}^4}(\widetilde{x}^{\ast}_t(y) {\!} - {\!} \tilde{x}_{t_{+}}^{\ast}(y))^{\top}A^{\top}{\!\!}\left(A\nabla^2{\psi_{t_{+}}}(\widetilde{x}^{\ast}_{t_{+}}(y))^{-1}{\!\!}A^{\top}\right)^{-1}{\!\!}A(\widetilde{x}^{\ast}_t(y) - \tilde{x}_{t_{+}}^{\ast}(y)) \vspace{1ex}\\
&\leq \frac{16}{M_{t_{+}}^4}(\widetilde{x}^{\ast}_t(y) - \tilde{x}_{t_{+}}^{\ast}(y))^{\top}\nabla^2{\psi_{t_{+}}}(\widetilde{x}^{\ast}_{t_{+}}(y))(\widetilde{x}^{\ast}_t(y) - \tilde{x}_{t_{+}}^{\ast}(y)) \vspace{1ex}\\
&= \frac{16}{M_{t_{+}}^4}\abss{\widetilde{x}^{\ast}_t(y) - \tilde{x}_{t_{+}}^{\ast}(y))}_{\tilde{x}_{t_{+}}^{\ast}(y), t_{+}}^2 = \frac{16}{M_{t_{+}}^4}\tilde{\Delta}_{+}^2.
\end{array}
\end{equation}
Here, we use the fact that $A^{\top}(AQ^{-1}A^{\top})^{-1}A \preceq Q$ for any symmetric positive definite matrix $Q$ and any full-row rank matrix $A$.

Plugging \eqref{eq:lm2_est2} and \eqref{eq:lm2_est3} into \eqref{eq:lm2_est1}, we get
\begin{equation*} 
\tfrac{4}{M_{t_{+}}^2}\tnorms{\bar{y}_{+} - \bar{u}}_{y, t_{+}} \leq \tfrac{4}{M_{t_{+}}^2}\tilde{\Delta}_{+} + \frac{\sqrt{t^2-2t_{+}t(1-\tilde{\Delta})^2+tt_{+}}}{1-\tilde{\Delta}}\tnorms{\bar{u} - y}_{y,t}.
\end{equation*}
This inequality is equivalent to 
\begin{equation}\label{eq:lm2_est4} 
\tnorms{\bar{y}_{+} - \bar{u}}_{y, t_{+}} \leq \tilde{\Delta}_{+} + \frac{\sqrt{(\frac{t}{t_{+}})^2-2\frac{t}{t_{+}}(1-\tilde{\Delta})^2+\frac{t}{t_{+}}}}{1-\tilde{\Delta}}\tnorms{\bar{u} - y}_{y,t}.
\end{equation}
Finally, we can derive
\begin{equation}\label{eq:lm2_est5} 
\begin{array}{ll}
\lambda_{t_{+}}(y) & \overset{\tiny \tiny\eqref{eq:lm2_def}}{=} \tnorms{\bar{y}_{+} - y}_{y, t_{+}} \vspace{1ex}\\
& \leq \tnorms{\bar{y}_{+} - \bar{u}}_{y, t_{+}} + \tnorms{\bar{u} - y}_{y, t_{+}} \vspace{1ex}\\
&\overset{\tiny \tiny\eqref{eq:lm2_est4}}{\leq} \tilde{\Delta}_{+} + \frac{\sqrt{(\frac{t}{t_{+}})^2-2\frac{t}{t_{+}}(1-\tilde{\Delta})^2+\frac{t}{t_{+}}}}{1-\tilde{\Delta}}\lambda_t(y) + \tnorms{\bar{u} - y}_{y, t_{+}} \vspace{1ex}\\
& \overset{\tiny \tiny\eqref{eq:lm2_est_d}}{\leq} \tilde{\Delta}_{+} + \frac{\sqrt{(\frac{t}{t_{+}})^2-2\frac{t}{t_{+}}(1-\tilde{\Delta})^2+\frac{t}{t_{+}}}}{1-\tilde{\Delta}}\lambda_t(y) + \frac{t}{t_{+}(1-\tilde{\Delta})}\tnorms{\bar{u} - y}_{y, t} \vspace{1ex}\\
&\overset{\tiny \tiny\eqref{eq:lm2_def}}{\leq} \tilde{\Delta}_{+} + \Bigg[\frac{\sqrt{(\frac{t}{t_{+}})^2-2\frac{t}{t_{+}}(1-\tilde{\Delta})^2+\frac{t}{t_{+}}}~+~\frac{t}{t_{+}}}{1-\tilde{\Delta}}\Bigg]\lambda_t(y),
\end{array}
\end{equation}
which is exactly \eqref{eq:key_est02} due to the update $t_{+} := \sigma t$.
\Eproof

\vspace{2ex}
In order to prove Lemma~\ref{le:key_property2} we need the following auxiliary result.

\begin{lemma}\label{le:key_property3}
Let $a\in (0,1)$ and $b\in(0,1)$ be two positive numbers such that $a+b <1$.
Let
\begin{equation*}
\Nc(a, b) := \set{ (u, v) \in \R^2_{+} ~\mid~ \frac{u^2}{1+u}\leq au + bv,  ~\frac{v^2}{1+v}\leq av + bu}.
\end{equation*}
Then, $\Nc(a,b) \subseteq \set{(u, v) \in \R^2_{+} ~\mid~ u\leq \frac{a+b}{1-a-b}, ~v\leq \frac{a+b}{1-a-b}}$.
\end{lemma}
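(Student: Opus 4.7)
The plan is to exploit the symmetry of the two defining inequalities together with a simple WLOG argument on which of $u$ or $v$ is larger. Since the conclusion is symmetric in $u$ and $v$, it suffices to bound the maximum of the two.

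First I would assume, without loss of generality, that $u \geq v$. If $u = 0$ then also $v = 0$ and the conclusion is trivial, so assume $u > 0$. Under $u \geq v$, the right-hand side of the first defining inequality satisfies
\begin{equation*}
au + bv \leq au + bu = (a+b)u,
\end{equation*}
so the first constraint in $\Nc(a,b)$ yields
\begin{equation*}
\frac{u^2}{1+u} \leq (a+b)u.
\end{equation*}

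Next, dividing both sides by $u > 0$ gives $\frac{u}{1+u} \leq a+b$, which rearranges to $u(1 - a - b) \leq a + b$. Since $a+b < 1$ by assumption, this yields
\begin{equation*}
u \leq \frac{a+b}{1-a-b}.
\end{equation*}
Finally, because $v \leq u$, the same bound holds for $v$. The symmetric case $v \geq u$ is handled identically, with the roles of the two inequalities exchanged. This proves the claimed inclusion.

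The argument is entirely elementary; the only ``obstacle'' is noticing that one does not need to analyze the coupled quadratic system directly, but can simply use the ordering of $u$ and $v$ to decouple the two inequalities and reduce to a single one-variable bound.
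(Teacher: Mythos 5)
Your proposal is correct, and it takes a genuinely different and cleaner route than the paper's. The paper argues by contradiction: it assumes $u > \frac{a+b}{1-a-b}$, feeds this through the first inequality to conclude $v > u$, then feeds that through the second to conclude $u > v$, and derives a contradiction. Your argument instead observes that the conclusion is symmetric, so one may assume WLOG $u \geq v$; under this ordering, the cross-term $bv$ in the first inequality is dominated by $bu$, and the two-variable system collapses to a single scalar inequality $\frac{u}{1+u} \leq a+b$, which rearranges directly to the desired bound, with the bound on $v$ inherited from $v \leq u$. This avoids both the contradiction framework and the intermediate ``bootstrapping'' step in which the paper first establishes $v > u$. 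Your approach is shorter and makes clearer that the only structural fact used is that the larger of the two variables satisfies a decoupled, monotone inequality; the paper's contradiction argument buys nothing extra here. One very minor stylistic point: the case $u = 0$ is handled correctly, but you could omit it entirely by noting that $\frac{u}{1+u} \leq a+b$ holds trivially when $u = 0$, so the division by $u$ need not be flagged separately.
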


\begin{proof}
Suppose $(u, v)\in \Nc(a,b)$ and $u > \frac{a+b}{1-a-b}$.
Then, according to $\frac{u^2}{1 + u}\leq au + bv$, we have
\begin{equation}\label{eq:lm4_est1}
v\geq  \frac{1}{b}\left(\frac{u^2}{1 + u} - au \right) =\frac{u}{b}\left(\frac{u}{1 + u} - a \right) > \frac{u}{b}\left(\frac{\frac{a+b}{1-a-b}}{1 + \frac{a+b}{1-a-b}}-a\right) = u  > \frac{a+b}{1-a-b}.
\end{equation}
Therefore, we can show that
\begin{equation}\label{eq:lm4_est2}
\frac{1}{b}\left(\frac{v^2}{1 + v} - av \right) =\frac{v}{b}\left(\frac{v}{1 + v} - a\right) \overset{\tiny\eqref{eq:lm4_est1}}{>} \frac{v}{b}\left(\frac{\frac{a+b}{1-a-b}}{1+\frac{a+b}{1-a-b}} - a \right) = v.
\end{equation}
However, because $\frac{v^2}{1 + v}\leq av + bu$, one can show that
\begin{equation*} 
u \geq\frac{1}{b}\left(\frac{v^2}{1 + v} - av \right) \overset{\tiny\eqref{eq:lm4_est2}}{>} v.
\end{equation*}
This contradicts \eqref{eq:lm4_est1}. 
Consequently, we must have $u\leq\frac{a+b}{1-a-b}$. 
Use the symmetry between $u$ and $v$, we also have $v\leq\frac{a+b}{1-a-b}$.
\Eproof
\end{proof}

\beforesubsubsec
\subsubsection{The proof of Lemma~\ref{le:key_property2}: Upper bound on the solution difference $\tilde{\Delta}$.}\label{apdx:le:key_property2}
\aftersubsubsec
First, by the self-concordance of $\psi_t$, we have
\begin{equation}\label{eq:lm3_est1}
\begin{array}{ll}
\frac{\tilde{\Delta}^2}{1 + \tilde{\Delta}} &\leq \iprods{\widetilde{x}^{\ast}_{t_{+}}(y) - \widetilde{x}^{\ast}_{t}(y), \nabla{\psi_t}(\widetilde{x}^{\ast}_{t_{+}}(y)) - \nabla{\psi_t}(\widetilde{x}^{\ast}_{t}(y))} \vspace{1ex}\\
&\leq \tilde{\Delta}\abss{\nabla{\psi_t}(\widetilde{x}^{\ast}_{t}(y))}_{\widetilde{x}^{\ast}_t(y), t}^{\ast} + \tilde{\Delta}_{+} \abss{\nabla{\psi_t}(\widetilde{x}^{\ast}_{t_{+}}(y))}_{\widetilde{x}^{\ast}_{t_{+}}(y), t_{+}}^{\ast}.
\end{array}
\end{equation}
Next, since $\nabla{\psi_{t}}(x) = \frac{1}{t}\nabla{g}(x)+\nabla{f}(x)$ and  $\nabla{\psi_{t_{+}}}(x) = \frac{1}{t_{+}}\nabla{g}(x)+\nabla{f}(x)$, we have 
\begin{equation*}
\nabla{\psi_t}(\widetilde{x}^{\ast}_{t_{+}}(y)) = \frac{t_{+}}{t}\nabla{\psi_{t_{+}}}(\widetilde{x}^{\ast}_{t_{+}}(y)) + \frac{(t-t_{+})}{t}\nabla{f}(\widetilde{x}^{\ast}_{t_{+}}(y)). 
\end{equation*}
Therefore, we can bound
\begin{equation}\label{eq:lm3_est2}
\begin{array}{ll}
 \abss{\nabla{\psi_t}(\widetilde{x}^{\ast}_{t_{+}}(y))}_{\widetilde{x}^{\ast}_{t_{+}}(y), t_{+}}^{\ast} &\leq \frac{t_{+}}{t} \abss{\nabla{\psi_{t_{+}}}(\widetilde{x}^{\ast}_{t_{+}}(y))}_{\widetilde{x}^{\ast}_{t_{+}}(y), t_{+}}^{\ast} \vspace{1ex}\\
 & + {~} \frac{(t-t_{+})}{t} \abss{\nabla{f}(\widetilde{x}^{\ast}_{t_{+}}(y))}_{\widetilde{x}^{\ast}_{t_{+}}(y), t_{+}}^{\ast}.
\end{array}
\end{equation}
Now, since $\nabla^2{\psi_{t_{+}}}(x) \succeq \nabla^2{f}(x)$, we can show that
\begin{equation*}
\abss{\nabla{f}(\widetilde{x}^{\ast}_{t_{+}}(y))}_{\widetilde{x}^{\ast}_{t_{+}}(y), t_{+}}^{\ast} \leq \left[\nabla{f}(\widetilde{x}^{\ast}_{t_{+}}(y))^{\top}\nabla^2{f}(\widetilde{x}^{\ast}_{t_{+}}(y))^{-1}\nabla{f}(\widetilde{x}^{\ast}_{t_{+}}(y))\right]^{1/2} \leq \sqrt{\nu_f}.
\end{equation*}
Substituting this and \eqref{eq:lm3_est2} into \eqref{eq:lm3_est1}, and using the definition of $\hat{\delta}_{+}$ and $\hat{\delta}$, we get
\begin{equation*} 
\frac{\tilde{\Delta}^2}{1 + \tilde{\Delta}} \leq \tilde{\Delta}\hat{\delta} + \left(\frac{\hat{\delta}_{+}t_{+}}{t} + \frac{(t - t_{+})}{t}\sqrt{\nu_f}\right)\tilde{\Delta}_{+}.
\end{equation*}
Finally, using $t_{+} = \sigma t$, we obtain the first estimate of \eqref{eq:key_est03} from the last inequality.

Similarly, by following the same argument as in the proof of the first estimate in \eqref{eq:key_est03}, we can show that
\begin{equation*} 
\frac{\tilde{\Delta}_{+}^2}{1 + \tilde{\Delta}_{+}} \leq \tilde{\Delta}_{+}\hat{\delta}_{+} + \left(\frac{\hat{\delta}t}{t_{+}} + \frac{(t - t_{+})}{t_{+}}\sqrt{\nu_f}\right)\tilde{\Delta},
\end{equation*}
which is the second estimate of  \eqref{eq:key_est03}.

Assume that we choose $\hat{\delta} \leq \delta$ and $\hat{\delta}_{+} \leq \delta$ for some $\delta \in (0, 1)$.
Since $t_{+} = \sigma t$, if we denote by $c_{\nu}(\sigma) := \frac{\delta}{\sigma} + \frac{(1 - \sigma)}{\sigma}\sqrt{\nu_f} \in (0, 1)$.
Assume further that $\delta + c_{\nu}(\sigma) < 1$.
Then, it is clear that $\frac{\hat{\delta}_{+}t_{+}}{t} + \frac{(t - t_{+})}{t}\sqrt{\nu_f} \leq c_{\nu}(\sigma)$ and $\frac{\hat{\delta}t}{t_{+}} + \frac{(t - t_{+})}{t_{+}}\sqrt{\nu_f} \leq c_{\nu}(\sigma)$.
Applying Lemma~\ref{le:key_property3}, we can see that $(\tilde{\Delta}, \tilde{\Delta}_{+})\in\Nc(\delta, c_{\nu}(\sigma))$.
Hence, we have
\begin{equation*} 
\tilde{\Delta}\leq \frac{\delta + c_{\nu}(\sigma)}{1 - \delta -c_{\nu}(\sigma)} ~~~\text{and}~~ \tilde{\Delta}_{+} \leq \frac{\delta + c_{\nu}(\sigma)}{1 - \delta -c_{\nu}(\sigma)},
\end{equation*}
which proves \eqref{eq:key_est05}.
\Eproof

\beforesubsubsec
\subsection{\bf The proof of result in Subsection~\ref{subsec:initial_point}: Finding an initial point.}\label{apdx:th:initial_point}
\aftersubsubsec
The proof of Theorem~\ref{th:initial_point} requires the following key lemma.

\begin{lemma}\label{le:initial_point}
Let $\set{\hat{y}_j}$ be the sequence generated by Algorithm \ref{alg:A2}, where the step-size $\alpha_j$ is chosen such that $\alpha_j \in (0, 1]$ and $\frac{\alpha_j\hat{\lambda}_j}{1-\delta_j} < 1$.
Then, the following estimate holds
\begin{equation}\label{eq:lm51_1}
D_{t_0}(\hat{y}^{j+1}) \leq D_{t_0}(\hat{y}^{j}) - \alpha_j\big[\hat{\lambda}_j^2  - (\epsilon_j + \delta_j)\hat{\lambda}_j\big] + \omega_{\ast}\left(\frac{\alpha_j\hat{\lambda}_j}{1-\delta_j}\right),
\end{equation}
where $D_t$ is defined in \eqref{eq:smoothed_dual_prob} and $\omega_{\ast}(\tau) := -\tau - \ln(1-\tau)$.
The optimal step-size $\alpha_j$ that minimizes the right-hand side of \eqref{eq:lm51_1} is 
\begin{equation}\label{eq:step_size}
\alpha_j := \frac{(\hat{\lambda}_j - \epsilon_j - \delta_j)(1-\delta_j)^2}{\left[ 1+ (1-\delta_j)(\hat{\lambda}_j - \epsilon_j - \delta_j)\right]\hat{\lambda}_j} \in (0, 1). 
\end{equation}
The corresponding estimate from \eqref{eq:lm51_1} with this step-size is 
\begin{equation}\label{eq:lm51_2}
D_{t_0}(\hat{y}^{j+1})  \leq D_{t_0}(\hat{y}^{j})  -  \omega\left((\hat{\lambda}_j - \epsilon_j - \delta_j)(1-\delta_j)\right).
\end{equation}
In particular, if we set $\delta_j = \epsilon_j = 0$, then we get the original damped-step proximal-Newton step-size $\alpha_j = \frac{1}{1+\lambda_j}$ and the estimate $D_{t_0}(\hat{y}^{j+1}) \leq D_{t_0}(\hat{y}^{j}) -  \omega(\lambda_j)$ for $\omega(\tau) := \tau - \ln(1+\tau)$.
\end{lemma}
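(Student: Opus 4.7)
The plan is to use the standard self-concordance of $d_{t_0}$ to bound $d_{t_0}(\hat y^{j+1})$, pass the nonsmooth term through convexity, then exploit the approximate optimality of $\hat s^j$ as an $\epsilon_j$-minimizer of $Q_{t_0}$ together with the gradient/Hessian error bounds from Proposition~\ref{pro:inexact_oracle}. The update is $\hat y^{j+1} = \hat y^j + \alpha_j(\hat s^j - \hat y^j)$, so since $d_{t_0}$ is standard self-concordant I would first invoke \cite[Thm.~4.1.8]{Nesterov2004} to get $d_{t_0}(\hat y^{j+1}) \le d_{t_0}(\hat y^j) + \alpha_j \iprods{\nabla d_{t_0}(\hat y^j), \hat s^j - \hat y^j} + \omega_\ast(\alpha_j \norms{\hat s^j - \hat y^j}_{\hat y^j})$ whenever $\alpha_j \norms{\hat s^j - \hat y^j}_{\hat y^j} < 1$. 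The Hessian-approximation part of \eqref{eq:inexact_oracle_properties} gives $\norms{\cdot}_{\hat y^j} \le (1-\delta_j)^{-1}\tnorms{\cdot}_{\hat y^j,t_0}$, so $\norms{\hat s^j - \hat y^j}_{\hat y^j} \le \hat\lambda_j/(1-\delta_j)$, which is exactly the argument of $\omega_\ast$ that appears in \eqref{eq:lm51_1}. Adding $h_{t_0}(\hat y^{j+1})$ to both sides and applying convexity $h_{t_0}(\hat y^{j+1}) \le (1-\alpha_j)h_{t_0}(\hat y^j) + \alpha_j h_{t_0}(\hat s^j)$ reduces the problem to proving $\iprods{\nabla d_{t_0}(\hat y^j), \hat s^j - \hat y^j} + h_{t_0}(\hat s^j) - h_{t_0}(\hat y^j) \le -\hat\lambda_j^2 + (\epsilon_j + \delta_j)\hat\lambda_j$.

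This key inequality is the main obstacle and is where I would spend the most care. Writing $\nabla d_{t_0}(\hat y^j) = \widetilde\nabla d_{t_0}(\hat y^j) + (\nabla d_{t_0}(\hat y^j) - \widetilde\nabla d_{t_0}(\hat y^j))$ and using Cauchy--Schwarz together with the third bound in \eqref{eq:inexact_oracle_properties} absorbs one term as $\delta_j \hat\lambda_j$. For the remaining piece I would compare $\hat s^j$ to the exact prox-step $s^j = \argmin Q_{t_0}$ of \eqref{eq:Pk_t}. Its optimality condition, combined with the convexity of $h_{t_0}$, yields $\iprods{\widetilde\nabla d_{t_0}(\hat y^j), s^j - \hat y^j} + h_{t_0}(s^j) - h_{t_0}(\hat y^j) \le -\lambda_j^2$ with $\lambda_j := \tnorms{s^j - \hat y^j}_{\hat y^j,t_0}$. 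Writing out $Q_{t_0}(\hat s^j) - Q_{t_0}(s^j) \le \epsilon_j^2/2$ term-by-term gives $\iprods{\widetilde\nabla d_{t_0}(\hat y^j), \hat s^j - s^j} + h_{t_0}(\hat s^j) - h_{t_0}(s^j) \le \tfrac{1}{2}(\epsilon_j^2 - \hat\lambda_j^2 + \lambda_j^2)$; adding the two yields $\iprods{\widetilde\nabla d_{t_0}(\hat y^j), \hat s^j - \hat y^j} + h_{t_0}(\hat s^j) - h_{t_0}(\hat y^j) \le -\tfrac{1}{2}\hat\lambda_j^2 - \tfrac{1}{2}\lambda_j^2 + \tfrac{1}{2}\epsilon_j^2$. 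Finally, since $\lambda_j \ge \hat\lambda_j - \epsilon_j$ by the triangle inequality (using $\tnorms{\hat s^j - s^j}_{\hat y^j,t_0}\le\epsilon_j$, a consequence of \eqref{eq:inexact_yk}), one checks $-\tfrac{1}{2}\lambda_j^2 + \tfrac{1}{2}\epsilon_j^2 \le -\tfrac{1}{2}\hat\lambda_j^2 + \epsilon_j\hat\lambda_j$, closing the bound.

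Having established \eqref{eq:lm51_1}, I would then optimize the right-hand side in $\alpha_j$ by differentiating: using $\omega_\ast'(\tau) = \tau/(1-\tau)$ and substituting $u := \alpha_j\hat\lambda_j/(1-\delta_j)$, the first-order condition becomes $u/(1-u) = (1-\delta_j)(\hat\lambda_j - \epsilon_j - \delta_j)$, which solves to $\alpha_j = (1-\delta_j)^2(\hat\lambda_j - \epsilon_j - \delta_j)/\{[1 + (1-\delta_j)(\hat\lambda_j - \epsilon_j - \delta_j)]\hat\lambda_j\}$, matching \eqref{eq:step_size}. Plugging back, $\alpha_j[\hat\lambda_j^2 - (\epsilon_j+\delta_j)\hat\lambda_j] = u^2/(1-u)$ and the decrease reduces algebraically to $\omega(\tau) = \tau - \ln(1+\tau)$ evaluated at $\tau = u/(1-u) = (1-\delta_j)(\hat\lambda_j - \epsilon_j - \delta_j)$, yielding \eqref{eq:lm51_2}. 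The exact-oracle specialization $\delta_j = \epsilon_j = 0$ collapses $\alpha_j$ to the classical damped-Newton step $1/(1+\lambda_j)$ and the decrement to $\omega(\lambda_j)$, confirming consistency with Nesterov--Nemirovskii theory.
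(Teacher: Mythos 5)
Your proposal is correct and follows essentially the same route as the paper: the self-concordance descent estimate with convexity of $h_{t_0}$, the Hessian error bound to translate the exact local norm into $\hat\lambda_j/(1-\delta_j)$, Cauchy--Schwarz with the inexact-gradient bound for the $\delta_j\hat\lambda_j$ term, the comparison of $\hat s^j$ against the exact prox step $s^j$ via $Q_{t_0}(\hat s^j)-Q_{t_0}(s^j)\le\epsilon_j^2/2$ and the optimality condition of $s^j$, the triangle inequality $\lambda_j\ge\hat\lambda_j-\epsilon_j$, and finally the one-dimensional step-size optimization. The only cosmetic difference is that you absorb the convexity of $h_{t_0}$ at $s^j$ directly into the optimality-condition step, whereas the paper keeps it as a separate inequality; the underlying argument is identical.
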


\begin{proof}
Firstly, from the self concordance of $d_{t_0}$ defined in \eqref{eq:smoothed_dual_prob} and $\hat{y}_{j+1} = (1-\alpha)\hat{y}_j + \alpha\hat{s}_j$, we can show that
\begin{equation}\label{eq:lm51_est1}
\begin{array}{ll}
	d_{t_0}(\hat{y}^{j+1}) + h_{t_0}(\hat{y}^{j+1}) & \leq d_{t_0}(\hat{y}^{j}) + \iprods{\nabla d_{t_0}(\hat{y}^j), \hat{y}^{j+1} - \hat{y}^j} + \omega_{\ast}(\norms{\hat{y}^{j+1} - \hat{y}^j}_{\hat{y}_j, t_0})\vspace{1ex}\\
	& + {~} (1-\alpha)h_{t_0}(\hat{y}^{j}) + \alpha h_{t_0}(\hat{s}^{j})\vspace{1ex}\\
	& = d_{t_0}(\hat{y}^{j}) + \alpha\iprods{\nabla d_{t_0}(\hat{y}^j), \hat{s}^{j} - \hat{y}^j} + \omega_{\ast}(\alpha\norms{\hat{s}^{j} - \hat{y}^j}_{\hat{y}_j, t_0})\vspace{1ex}\\
	& + {~} (1-\alpha)h_{t_0}(\hat{y}^{j}) + \alpha h_{t_0}(\hat{s}^{j})\vspace{1ex}\\
	& = (1-\alpha)\big(d_{t_0}(\hat{y}^{j}) + h_{t_0}(\hat{y}^{j})\big) + \omega_{\ast}(\alpha\norms{\hat{s}^{j} - \hat{y}^j}_{\hat{y}_j, t_0})\vspace{1ex}\\
	& + {~} \alpha\big(d_{t_0}(\hat{y}^{j}) + h_{t_0}(\hat{s}^{j}) + \iprods{\nabla d_{t_0}(\hat{y}^j), \hat{s}^{j} - \hat{y}^j}\big).
\end{array}
\end{equation}
Next, we will prove that 
\begin{equation}\label{eq:lm51_est2}
d_{t_0}(\hat{y}^{j}) + h_{t_0}(\hat{s}^{j}) + \iprod{\nabla d_{t_0}(\hat{y}^j), \hat{s}^{j} - \hat{y}^j} \leq d_{t_0}(\hat{y}^j) + h_{t_0}(\hat{y}^j) - \frac{1}{2}\lambda_j^2 - \frac{1}{2}\hat{\lambda}_j^2 + \frac{\epsilon_j^2}{2} + \delta_j \hat{\lambda}_j,
\end{equation}
where $\lambda_j := \tnorms{\hat{y}^{j} - s^j}_{\hat{y}_j, t_0}$. 

Indeed, by the Cauchy-Schwarz inequality, we have
\begin{equation}\label{eq:lm51_est3}
\begin{array}{ll}
\iprods{\nabla d_{t_0}(\hat{y}^j) - \widetilde{\nabla}d_{t_0}(\hat{y}^j), \hat{s}^{j} - \hat{y}^j} &\leq \tnorms{\nabla d_{t_0}(\hat{y}^j) - \widetilde{\nabla}d_{t_0}(\hat{y}^j)}_{\hat{y}^j,t_0}^{\ast} \tnorms{\hat{s}^{j} - \hat{y}^j}_{\hat{y}^j,t_0} \vspace{1ex}\\
&\leq \delta_j \hat{\lambda}_j.
\end{array}
\end{equation}
Since
\begin{equation*}
\hat{s}^j :\approx s^j := \text{prox}_{h_{t_0}}^{\widetilde{\nabla}^2d_{t_0}(\hat{y}^j)}\left( \hat{y}^j - \widetilde{\nabla}^2d_{t_0}(\hat{y}^j)^{-1}\widetilde{\nabla}d_{t_0}(\hat{y}^j) \right),
\end{equation*}
we have
\begin{equation}\label{eq:lm51_est4}
\begin{array}{ll}
\iprods{\widetilde{\nabla}d_{t_0}(\hat{y}^j), \hat{s}^j - \hat{y}^j} + h_{t_0}(\hat{s}^j) & \leq \iprods{\widetilde{\nabla}d_{t_0}(\hat{y}^j), s^j - \hat{y}^j} + h_{t_0}(s^j) \vspace{1ex}\\
& + \frac{1}{2}\tnorms{s^{j} - \hat{y}^j}_{\hat{y}^j,t_0}^2 - \frac{1}{2}\tnorms{\hat{s}^{j} - \hat{y}^j}_{\hat{y}^j,t_0}^2 + \frac{\epsilon_j^2}{2}\vspace{1ex}\\
& = \iprods{\widetilde{\nabla}d_{t_0}(\hat{y}^j), s^j - \hat{y}^j} + h_{t_0}(s^j) + \frac{1}{2}\lambda_j^2 - \frac{1}{2}\hat{\lambda}_j^2 + \frac{\epsilon_j^2}{2}.
\end{array}
\end{equation}
Using $0 \in \widetilde{\nabla}d_{t_0}(\hat{y}^j) + \widetilde{\nabla}^2d_{t_0}(s^{j} - \hat{y}^j) + \partial h_{t_0}(s^j)$, we can further estimate
\begin{equation}\label{eq:lm51_est5}
\begin{array}{l}
\iprods{\widetilde{\nabla}d_{t_0}(\hat{y}^j), s^j - \hat{y}^j} + h_{t_0}(s^j) + \frac{1}{2}\lambda_j^2 - \frac{1}{2}\hat{\lambda}_j^2 + \frac{\epsilon_j^2}{2} \vspace{1ex}\\
=  \iprods{-\widetilde{\nabla}^2d_{t_0}(s^{j} - \hat{y}^j) - \nabla h_{t_0}(s^j), s^j - \hat{y}^j} + h_{t_0}(s^j) + \frac{1}{2}\lambda_j^2 - \frac{1}{2}\hat{\lambda}_j^2 + \frac{\epsilon_j^2}{2} \vspace{1ex}\\
= \iprods{\nabla h_{t_0}(s^j), \hat{y}^j - s^j} + h_{t_0}(s^j) - \frac{1}{2}\lambda_j^2 - \frac{1}{2}\hat{\lambda}_j^2 + \frac{\epsilon_j^2}{2} \vspace{1ex}\\
\leq h_{t_0}(\hat{y}^j) - \frac{1}{2}\lambda_j^2 - \frac{1}{2}\hat{\lambda}_j^2 + \frac{\epsilon_j^2}{2},
\end{array}
\end{equation}
where $\nabla{h}_{t_0}(s^j) \in  \partial h_{t_0}(s^j)$.
Combining \eqref{eq:lm51_est4} and \eqref{eq:lm51_est5}, we get
\begin{equation}\label{eq:lm51_est6}
\iprods{\widetilde{\nabla}d_{t_0}(\hat{y}^j), \hat{s}^j - \hat{y}^j} + h_{t_0}(\hat{s}^j)  \leq h_{t_0}(\hat{y}^j) - \frac{1}{2}\lambda_j^2 - \frac{1}{2}\hat{\lambda}_j^2 + \frac{\epsilon_j^2}{2}.
\end{equation}
Now, we can prove \eqref{eq:lm51_est2} as follows:
\begin{equation*}
\begin{array}{l}
d_{t_0}(\hat{y}^{j}) + h_{t_0}(\hat{s}^{j}) + \iprods{\nabla d_{t_0}(\hat{y}^j), \hat{s}^{j} - \hat{y}^j} \vspace{1ex}\\
= d_{t_0}(\hat{y}^{j}) + h_{t_0}(\hat{s}^{j}) + \iprods{\widetilde{\nabla}d_{t_0}(\hat{y}^j), \hat{s}^{j} - \hat{y}^j} + \iprods{\nabla d_{t_0}(\hat{y}^j) - \widetilde{\nabla}d_{t_0}(\hat{y}^j), \hat{s}^{j} - \hat{y}^j}\vspace{1ex}\\
\overset{\tiny\eqref{eq:lm51_est3}}{\leq} d_{t_0}(\hat{y}^{j}) + h_{t_0}(\hat{s}^{j}) + \iprods{\widetilde{\nabla}d_{t_0}(\hat{y}^j), \hat{s}^{j} - \hat{y}^j} + \delta_j \hat{\lambda}_j\vspace{1ex}\\
\overset{\tiny\eqref{eq:lm51_est6}}{\leq} d_{t_0}(\hat{y}^j) + h_{t_0}(\hat{y}^j) - \frac{1}{2}\lambda_j^2 - \frac{1}{2}\hat{\lambda}_j^2 + \frac{\epsilon_j^2}{2} + \delta_j \hat{\lambda}_j.
\end{array}
\end{equation*}
Combining \eqref{eq:lm51_est1} and \eqref{eq:lm51_est2}, and notice that $\omega_{\ast}(\alpha\norms{\hat{s}^{j} - \hat{y}^j}_{\hat{y}_j, t_0}) \leq \omega_{\ast}(\frac{\alpha\hat{\lambda}_j}{1-\delta_j})$ we can deduce 
\begin{equation*} 
d_{t_0}(\hat{y}^{j+1}) + h_{t_0}(\hat{y}^{j+1}) \leq d_{t_0}(\hat{y}^{j}) + h_{t_0}(\hat{y}^{j}) - \alpha\left(\frac{\lambda_j^2}{2} + \frac{\hat{\lambda}_j^2}{2} - \frac{\epsilon_j^2}{2} - \delta_j\hat{\lambda}_j\right) + \omega_{\ast}\left(\frac{\alpha\hat{\lambda}_j}{1-\delta_j} \right).
\end{equation*}
Using the fact that $\lambda_j \geq \hat{\lambda}_j - \epsilon_j$ and the definition $D_{t_0}  := d_{t_0}  + h_{t_0}$, we obtain \eqref{eq:lm51_1}.

Next, if we maximize $\zeta(\alpha) := \alpha\big[\hat{\lambda}_j^2  - (\epsilon_j + \delta_j)\hat{\lambda}_j\big] - \omega_{\ast}\left(\frac{\alpha\hat{\lambda}_j}{1-\delta_j}\right)$, we have $\alpha^{\star} := \frac{(\hat{\lambda}_j - \epsilon_j - \delta_j)(1-\delta_j)^2}{\left[ 1+ (1-\delta_j)(\hat{\lambda}_j - \epsilon_j - \delta_j)\right]\hat{\lambda}_j}$ as defined by \eqref{eq:step_size}. 
Plugging $ \alpha^{\star}$ into $\zeta(\alpha)$, we get
\begin{equation*}
\zeta(\alpha^{\star}) = \frac{(\hat{\lambda}_j - \epsilon_j - \delta_j)^2(1-\delta_j)^2}{1+ (1-\delta_j)(\hat{\lambda}_j - \epsilon_j - \delta_j)} - \omega_{\ast}\left(\frac{(\hat{\lambda}_j - \epsilon_j - \delta_j)(1-\delta_j)}{1 + (\hat{\lambda}_j - \epsilon_j - \delta_j)(1-\delta_j)}\right).
\end{equation*}
Since $\frac{x^2}{1+x} - \omega_{\ast}\left(\frac{x}{1+x} \right) = \omega(x)$, we finally have $\zeta(\alpha^{\star}) = 	\omega\left((\hat{\lambda}_j - \epsilon_j - \delta_j)(1-\delta_j)\right)$, which proves \eqref{eq:lm51_2}.
If $\delta_j = \epsilon_j = 0$, then $\alpha_j$ reduces to $\frac{1}{1 + \lambda_j}$ and we obtain  $D_{t_0}(\hat{y}^{j+1}) \leq D_{t_0}(\hat{y}^{j}) -  \omega(\lambda_j)$ from \eqref{eq:lm51_2}.
\Eproof
\end{proof}


\bibliographystyle{plain}

\begin{thebibliography}{10}

\bibitem{Beck2009}
A.~Beck and M.~Teboulle.
\newblock {A} fast iterative shrinkage-thresholding agorithm for linear inverse
  problems.
\newblock {\em SIAM J. Imaging Sci.}, 2(1):183--202, 2009.

\bibitem{BenTal2001}
A.~Ben-Tal and A.~Nemirovski.
\newblock {\em {L}ectures on modern convex optimization: {A}nalysis,
  algorithms, and engineering applications}, volume~3 of {\em MPS/SIAM Series
  on Optimization}.
\newblock SIAM, 2001.

\bibitem{Bertsekas1989b}
D.P. Bertsekas and J.~N. Tsitsiklis.
\newblock {\em {P}arallel and distributed computation: {N}umerical methods}.
\newblock Prentice Hall, 1989.

\bibitem{Birge1985}
J.R. Birge.
\newblock {D}ecomposition and {P}artitioning {M}ethods for {M}ultistage
  {S}tochastic {L}inear {P}rograms.
\newblock {\em Operations Research}, 33(5):989--1007, 1985.

\bibitem{bitlislioglu2017interior}
A.~Bitlislioglu, I.~Pejcic, and C.~Jones.
\newblock Interior-point decomposition for multi-agent optimization.
\newblock In {\em 20th IFAC World Congress}, number EPFL-CONF-228343, 2017.

\bibitem{boguna2004models}
Mari{\'a}n Bogu{\~n}{\'a}, Romualdo Pastor-Satorras, Albert D{\'\i}az-Guilera,
  and Alex Arenas.
\newblock Models of social networks based on social distance attachment.
\newblock {\em Physical Review E}, 70(5):056122, 2004.

\bibitem{Boyd2004}
S.~Boyd and L.~Vandenberghe.
\newblock {\em {C}onvex {O}ptimization}.
\newblock University {P}ress, Cambridge, 2004.

\bibitem{Chambolle2011}
A.~Chambolle and T.~Pock.
\newblock A first-order primal-dual algorithm for convex problems with
  applications to imaging.
\newblock {\em J. Math. Imaging Vis.}, 40(1):120--145, 2011.

\bibitem{Connejo2006}
A.J. Connejo, R.~M\'{i}nguez, E.~Castillo, and R.~Garc\'{i}a-Bertrand.
\newblock {\em {D}ecomposition {T}echniques in {M}athematical {P}rogramming:
  {E}ngineering and {S}cience {A}pplications}.
\newblock Springer-Verlag, 2006.

\bibitem{Dantzig1963}
G.~B. Dantzig.
\newblock {\em {L}inear {P}rogramming and {E}xtensions}.
\newblock Princeton University Press, 1963.

\bibitem{Devolder2010}
O.~Devolder, F.Glineur, and Y.~Nesterov.
\newblock First-order methods of smooth convex optimization with inexact
  oracle.
\newblock {\em Math. Program.}, 146(1--2):37--75, 2014.

\bibitem{fukuda2000interior}
M.~Fukuda and M.~Kojima.
\newblock {\em Interior-point methods for Lagrangian duals of semidefinite
  programs}.
\newblock Inst. of Technology, 2000.

\bibitem{Fukuda2002}
M.~Fukuda, M.~Kojima, and M.~Shida.
\newblock {L}agrangian dual interior-point methods for semidefinite programs.
\newblock {\em SIAM J. Optim.}, 12:1007--1031, 2002.

\bibitem{gros2014newton}
S.~Gros.
\newblock A newton algorithm for distributed semi-definite programs using the
  primal-dual interior-point method.
\newblock In {\em 53rd IEEE Conference on Decision and Control}, pages
  3222--3227. IEEE, 2014.

\bibitem{halldorsson2003interior}
Bjarni~V Halld{\'o}rsson and Reha~H T{\"u}t{\"u}nc{\"u}.
\newblock An interior-point method for a class of saddle-point problems.
\newblock {\em Journal of Optimization Theory and Applications},
  116(3):559--590, 2003.

\bibitem{Kojima1993}
M.~Kojima, N.~Megiddo, S.~Mizuno, and et~al.
\newblock Horizontal and vertical decomposition in interior point methods for
  linear programs.
\newblock Technical report., Information Sciences, Tokyo Institute of
  Technology, Tokyo, 1993.

\bibitem{li2018highly}
X.~Li, D.~Sun, and K.-C. Toh.
\newblock A highly efficient semismooth {N}ewton augmented {L}agrangian method
  for solving {L}asso problems.
\newblock {\em SIAM J. Optim.}, 28(1):433--458, 2018.

\bibitem{Necoara2009}
I.~Necoara and J.A.K. Suykens.
\newblock Interior-point {L}agrangian decomposition method for separable convex
  optimization.
\newblock {\em J. Optim. Theory and Appl.}, 143(3):567--588, 2009.

\bibitem{Nesterov2004}
Y.~Nesterov.
\newblock {\em {I}ntroductory lectures on convex optimization: {A} basic
  course}, volume~87 of {\em Applied Optimization}.
\newblock Kluwer Academic Publishers, 2004.

\bibitem{Nesterov2011c}
Y.~Nesterov.
\newblock Barrier subgradient method.
\newblock {\em Math. Program., Ser. B}, 127:31--56, 2011.

\bibitem{Nesterov2007}
Y.~Nesterov.
\newblock Gradient methods for minimizing composite objective function.
\newblock {\em Math. Program.}, 140(1):125--161, 2013.

\bibitem{Nesterov1994}
Y.~Nesterov and A.~Nemirovski.
\newblock {\em Interior-point Polynomial Algorithms in Convex Programming}.
\newblock Society for Industrial Mathematics, 1994.

\bibitem{Nesterov2004c}
Y.~Nesterov and J.-Ph. Vial.
\newblock Augmented self-concordant barriers and nonlinear optimization
  problems with finite complexity.
\newblock {\em Math. Program.}, 99:149--174, 2004.

\bibitem{Pakazad2017}
S.~K. Pakazad, A.~Hansson, and M.~S. Andersen.
\newblock Distributed primal-dual interior-point methods for solving loosely
  coupled problems using message passing.
\newblock {\em Optim. Method Softw.}, 32(3):401--435, 2017.

\bibitem{Palomar2006}
D.P. Palomar and M.~Chiang.
\newblock {A} {T}utorial on {D}ecomposition {M}ethods for {N}etwork {U}tility
  {M}aximization.
\newblock {\em IEEE J. Selected Areas in Communications}, 24(8):1439--1451,
  2006.

\bibitem{Rockafellar1985}
R.T. Rockafellar.
\newblock {\em Convexity and Duality in Optimization}, chapter {M}onotropic
  {P}rogramming: {A} generalization of linear programming and network
  programming., pages 10--036.
\newblock Springer-Verlag, 1985.

\bibitem{Shida2008}
M.~Shida.
\newblock An interior-point smoothing technique for {L}agrangian relaxation in
  large-scale convex programming.
\newblock {\em Optimization}, 57(1):183--200, 2008.

\bibitem{rocketfuel}
N.~Spring, R.~Mahajan, and D.~Wetherall.
\newblock Measuring {ISP} topologies with rocketfuel.
\newblock In {\em SIGCOMM}, volume~32, pages 133--145, 2002.

\bibitem{SunTran2017gsc}
T.~Sun and Q.~Tran-Dinh.
\newblock {G}eneralized {S}elf-{C}oncordant {F}unctions: {A} {R}ecipe for
  {N}ewton-{T}ype {M}ethods.
\newblock {\em Math. Program. (online first)}, pages 1--63, 2018.

\bibitem{Toh2010}
K.-Ch. Toh, M.J. Todd, and R.H. T\"{u}t\"{u}nc\"{u}.
\newblock On the implementation and usage of {SDPT}3 -- a {M}atlab software
  package for semidefinite-quadratic-linear programming.
\newblock Tech. Report~4, NUS Singapore, 2010.

\bibitem{TranDinh2013e}
Q.~Tran-Dinh, A.~Kyrillidis, and V.~Cevher.
\newblock An inexact proximal path-following algorithm for constrained convex
  minimization.
\newblock {\em SIAM J. Optim.}, 24(4):1718--1745, 2014.

\bibitem{Tran-Dinh2013a}
Q.~Tran-Dinh, A.~Kyrillidis, and V.~Cevher.
\newblock Composite self-concordant minimization.
\newblock {\em J. Mach. Learn. Res.}, 15:374--416, 2015.

\bibitem{TranDinh2015f}
Q.~Tran-Dinh, A.~Kyrillidis, and V.~Cevher.
\newblock A single phase proximal path-following framework.
\newblock {\em Math. Oper. Res.}, 43(4):1326--1347, 2018.

\bibitem{TranDinh2012c}
Q.~Tran-Dinh, I.~Necoara, C.~Savorgnan, and M.~Diehl.
\newblock {A}n inexact perturbed path-following method for {L}agrangian
  decomposition in large-scale separable convex optimization.
\newblock {\em SIAM J. Optim.}, 23(1):95--125, 2013.

\bibitem{TranDinh2016c}
Q.~Tran-Dinh, T.~Sun, and S.~Lu.
\newblock Self-concordant inclusions: A unified framework for path-following
  generalized {N}ewton-type algorithms.
\newblock {\em Math. Program. (oneline first)}, pages 1--51, 2018.

\bibitem{Tsiaflakis2008}
P.~Tsiaflakis, M.~Diehl, and M.~Moonen.
\newblock {D}istributed spectrum management algorithms for multi-user {DSL}
  networks.
\newblock {\em IEEE Transactions on Signal Processing}, 56(10):4825--4843,
  2008.

\bibitem{Yamashita2011}
M.~Yamashita, K.~Fujisawa, M.~Fukuda, K.~Kobayashi, K.~Nakta, and M.~Nakata.
\newblock {\em Handbook on Semidefinite, Cone and Polynomial Optimization:
  Theory, Algorithms, Software and Applications}, chapter Latest developments
  in the SDPA Family for solving large-scale SDPs, pages 687--714.
\newblock Springer-Verlag, New York, USA, 2011.

\bibitem{yang2015sdpnalp}
L.~Yang, D.~Sun, and K.-C. Toh.
\newblock {SDPNAL}+: a majorized semismooth {N}ewton-{CG} augmented
  {L}agrangian method for semidefinite programming with nonnegative
  constraints.
\newblock {\em Math. Program. Comput.}, 7(3):331--366, 2015.

\bibitem{Zhao2005}
G.~Zhao.
\newblock A {L}agrangian dual method with self-concordant barriers for
  multistage stochastic convex programming.
\newblock {\em Math. Program.}, 102:1--24, 2005.

\bibitem{zhao2010newton}
X.-Y. Zhao, D.~Sun, and K.-C. Toh.
\newblock A {N}ewton-{CG} augmented {L}agrangian method for semidefinite
  programming.
\newblock {\em SIAM J. Optim.}, 20(4):1737--1765, 2010.

\end{thebibliography}

\end{document}